\documentclass[12pt]{article}
\usepackage{amsmath,amssymb,amsthm,tikz,tikz-cd}
\usepackage{mathabx}
\usepackage[mathscr]{euscript}
\usetikzlibrary{matrix,arrows}
\usepackage{adjustbox}
\usepackage[bookmarksopen,bookmarksdepth=4]{hyperref}
 \usepackage{relsize}
\usepackage[utf8]{inputenc}
\usepackage[english]{babel}
\usepackage[T1]{fontenc}
\usepackage{graphicx}
\usepackage{rotating}
\usepackage{faktor}
\usepackage[a4paper]{geometry}
\geometry{top=3cm, bottom=3cm, left=2.5cm, right=2.5cm, marginparsep=1cm}

\usepackage{xassoccnt}
\NewTotalDocumentCounter{totalfigures}
\NewTotalDocumentCounter{totaltables}

\DeclareAssociatedCounters{figure}{totalfigures}
\DeclareAssociatedCounters{table}{totaltables}

\hypersetup{
    colorlinks = true,
    linkcolor = {red},
}

\newcommand{\defi}[1]{\emph{\textbf{#1}}}

\newcommand{\Aa}{\mathbb{A}} 
\newcommand{\Nn}{\mathbb{N}}

\newcommand{\Rr}{\mathbb{R}}

\newcommand{\Tt}{\mathbb{T}}
\newcommand{\Ll}{\mathbb{L}}

\newcommand{\Id}{\mathrm{Id}}  
\newcommand{\Gg}{\mathbb{G}} 
 
\newcommand{\tx}[1]{\mathrm{#1}}

\def\G_#1{\mathfrak{#1}}

\newcommand{\icat}{$\infty$-category}
\newcommand{\icats}{$\infty$-categories}

\newcommand{\Map}{\tx{Map}}
\newcommand{\Hom}{\tx{Hom}}
\newcommand{\Sym}{\tx{Sym}}
\newcommand{\Mod}{\tx{Mod}}

\usepackage{tikz}
\usepackage{tikz-cd}
\usepackage{pgfplots}
\usetikzlibrary{calc}
\usetikzlibrary{shadows}
\usetikzlibrary{arrows}
\usetikzlibrary{patterns}

\usepackage{multicol}
\setlength{\columnseprule}{0.2mm}

\newtheorem{theor}{Theorem}[section]
\newtheorem{Th}[theor]{Theorem}
\newtheorem{Lem}[theor]{Lemma}
\newtheorem{Prop}[theor]{Proposition}

{  \theoremstyle{definition}
           \newtheorem{Def}[theor]{Definition}
           \newtheorem{RQ}[theor]{Remark}
           \newtheorem{Ex}[theor]{Example}
           
}

 \title{A Derived Lagrangian Fibration on the Derived Critical Locus}
 \author{Albin Grataloup \footnote{IMAG, Univ. Montpellier, CNRS, Montpellier, France \newline 
 albin.grataloup@umontpellier.fr}}

\begin{document}
\maketitle

\begin{abstract}
We study the symplectic geometry of derived intersections of Lagrangian morphisms. In particular, we show that for a functional $f : X \rightarrow \mathbb{A}_k^1$, the derived critical locus has a natural Lagrangian fibration $\textbf{Crit}(f) \rightarrow X$. In the case where $f$ is non-degenerate and the strict critical locus is smooth, we show that the Lagrangian fibration on the derived critical locus is determined by the Hessian quadratic form.   
\end{abstract}

\tableofcontents

\newpage

\section{Introduction}

In the context of derived algebraic geometry (\cite{LuDAG}, \cite{LuDAGV}, \cite{To1}, \cite{HAGI}, \cite{HAGII}), the notion of shifted symplectic structures was developed in \cite{PTVV} (see also \cite{Cal1} and \cite{Cal2}). This has proven to be very useful in order to obtain symplectic structures out of natural constructions. For example we obtain:
\begin{itemize}
\item shifted symplectic structures from transgression procedures (Theorem 2.5 in \cite{PTVV}), for example, the AKSZ construction. 
\item shifted symplectic structures from derived intersections of Lagrangians structures (Section 2.2 in \cite{PTVV}).
\item symplectic structures on various moduli spaces (Section 3.1 in \cite{PTVV}).
\item quasi-symplectic groupoids (see \cite{Xu03}) inducing shifted symplectic structures on the quotient stack as explained in \cite{Cal1}.
\item symmetric obstruction theory as defined in \cite{BF} from $(-1)$-shifted symplectic derived stacks (see \cite{STV} for the obstruction theory on derived stacks and \cite{PTVV} for the symmetric and symplectic enhancement thereof).
\item the $d$-critical loci as defined by Joyce in \cite{Joyce}. Every $(-1)$-shifted symplectic derived scheme induces a classical $d$-critical locus on its truncation (see Theorem 6.6 in \cite{Darboux}).   
\end{itemize} 

Another very useful construction in derived geometry is the derived intersection of derived schemes or derived stacks (see \cite{PTVV}). This includes many constructions such as:

\begin{itemize}
\item the derived critical locus of a functional (see \cite{PTVV} and \cite{Ve1}). For an action functional, this amounts to finding the space of solutions to the Euler-Lagrange equations, as well as remembering about the symmetries of the functional. 
\item $G$-equivariant intersections. This includes the example of symplectic reduction which can be expressed as the derived intersection of derived quotient stacks (see Section 2.1.2 in \cite{Cal4}).
\end{itemize}

In this paper, we make a more precise study of the shifted symplectic geometry of derived critical loci, and more generally of the derived intersections of Lagrangian morphisms. In particular, the main theorem (Theorem \ref{Th_DerivedIntersectionLagrangianFibration}) of this paper says  that whenever the Lagrangian morphisms $f_i : X_i \rightarrow Z$, $i=1..2$ look like "sections" in the sense that there exists a map $r: Z \rightarrow X$ such that the composition maps $r \circ f_i: X_i \rightarrow X$ are weak equivalences, then the natural morphism $X_1 \times_Z X_2 \rightarrow X$ is a Lagrangian fibration (see \cite{Cal2}). We then specialise this result to various examples and show in particular that, for the derived critical locus of a non-degenerate functional on a smooth algebraic variety, the non-degeneracy of the Lagrangian fibration is related to the non-degeneracy of the Hessian quadratic form of the functional. \\

This paper starts, in Subsection \ref{Sec_DerivedSymplecticGeometry},  by recalling the basic definitions and properties of shifted symplectic structures, Lagrangian structures and Lagrangian fibrations. We also recall, in Section \ref{Sec_RelativeCotangentComplex}, basic properties of the relative cotangent complexes of linear stacks that prove useful when we try to understand in more details the structure of Lagrangian fibrations on derived critical loci. 

In Section \ref{Sec_SymplecticGeometryoftheDerivedCriticalLocus} we start by recalling the fact that a derived intersection of Lagrangian structures in a $n$-shifted symplectic derived Artin stack is $(n-1)$-shifted symplectic. Then, in Subsection \ref{Sec_LagragianFibIntersection}, we state and prove the main theorem (Theorem \ref{Th_DerivedIntersectionLagrangianFibration}) that roughly says that if the Lagrangian morphisms look like sections (up to homotopy), then the natural projection from the derived intersection has a structure of a Lagrangian fibration. We then recall basic elements on the derived critical loci of a functional $f : X \rightarrow \Aa_k^1$, and then try to describe the Lagrangian fibration structure on the natural map $\textbf{Crit}(f) \rightarrow X$ obtained from the main theorem.

Section \ref{Sec_Examples} gives examples of applications of our main theorem. In particular, in Subsections \ref{Sec_1NonDegenerateCritPoint} and \ref{Sec_FamillyNonDegenerateCriticalLocus}, we give a better description of the Lagrangian fibration on the derived critical loci for non-degenerate functionals. We show that the non-degeneracy condition of the Lagrangian fibration of the derived critical locus of a non-degenerate functional on a smooth algebraic variety is given by the non-degeneracy of the Hessian quadratic form.        

 \paragraph{Acknowledgements:} I would like to thank Damien Calaque for suggesting this project; for all his help with it and for his revisions of this paper. I am also very grateful for everything he explained to me on the subject of derived algebraic geometry. I would also like to thank Pavel Safronov for his comments on the first version of this paper. This research has received funding from the European Research Council (ERC) under the European Union’s Horizon 2020 research and innovation programme (Grant Agreement No. 768679).
 
 \paragraph{Notation:}
 
 \begin{itemize}
 \item Throughout this paper $k$ denotes a field of characteristic $0$.
 \item $\textbf{cdga}$ (resp. $\textbf{cdga}_{\leq 0}$) denotes the \icat \ of commutative differential graded algebra over $k$ (resp. commutative differential graded algebra in non positive degrees). 
 \item $\textbf{cdga}^{gr}$ denotes the \icat \ of commutative monoids in the category of graded complexes $\textbf{dg}_k^{gr}$. 
 
 \item $A-\textbf{Mod}$ denotes the \icat \ of differential graded $A$-modules for $A \in \textbf{cdga}$.
 
 \item $\textbf{cdga}^{\epsilon-gr}$ denotes the \icat \ of graded mixed differential graded algebra. We denote the differential $\delta$ and the mixed differential $\epsilon$ or $d = d_{DR}$ in the case of the De Rham complex of a derived Artin stack $X$, denoted $\textbf{DR}(X)$. We refer to \cite{CPTVV} for the definitions of $\textbf{cdga}^{\epsilon-gr}$, $\textbf{cdga}^{gr}$, $\textbf{dg}_k^{gr}$ and the De Rham complex (see also \cite{PTVV} but with a different grading convention). 
 \item All the \icats \ above are localisations of model categories (see \cite{CPTVV} for details on these model structures and associated \icats ). All along, unless explicitly stated otherwise, every diagrams will be homotopy commutative, every functor will be $\infty$-functors and every (co)limits will be $\infty$-(co)limits. 
 \item For $X$ a derived Artin stack, $\textbf{QC}(X)$ denote the \icat \ of quasi-coherent sheaves on $X$. 

 \item In this paper derived Artin stack, denoted $\textbf{dSt}$, are defined as in \cite{HAGII}. In particular derived Artin stacks are locally of finite presentation over $\tx{Spec}(k)$.
 \item We denote by $\Ll_X$ the cotangent complex of a derived Artin stack $X$. We denote by $\Tt_X := \Ll_X^\vee := \tx{Hom}\left(\Ll_X, \_O_X \right)$ its dual.   
\end{itemize}

 \section{Derived Symplectic Geometry}
 \label{Sec_DerivedSymplecticGeometry}
 \subsection{Shifted Symplectic Structures}

Before going to symplectic structures, we make a short recall of differential calculus and (closed) differential $p$-forms in the derived setting. Recall from \cite{PTVV} that there are classifying stacks $\_A^p (\bullet, n)$ and $\_A^{p,cl}(\bullet, n)$ of respectively the space of $n$-shifted differential $p$-forms and the space of $n$-shifted closed differential $p$-forms. We use the grading conventions used in \cite{CPTVV}. On a derived affine scheme \textbf{Spec}(A), the space of $p$-forms of degree $n$ and the space of closed $p$-forms of degree $n$ are defined respectively by 
\[\_A^p(A, n) := \Map_{\textbf{cdga}^{gr}} \left( k[-n-p](-p), \textbf{DR}(A) \right)\]

 and 
 
 \[ \_A^{p,cl}(A, n) := \Map_{\textbf{cdga}^{\epsilon-gr}} \left( k[-n-p](-p), \textbf{DR}(A) \right).\] 
 
 From \cite{CPTVV}, the de Rham complex of $A$, denoted $\textbf{DR}(A)$, can be described, as a graded complex, by $\textbf{DR}(A)^\# \simeq \Sym_A \Ll_A[-1](-1)$ where $(-)^\# : \textbf{cdga}^{\epsilon-gr} \rightarrow \textbf{cdga}^{gr}$ is the functor forgetting the mixed structure (we refer to \cite{CPTVV} for more details on the de Rham complex).   \\

All along, we denote the internal differential, i.e. the differential on $\Ll_A$, by $\delta$ and the mixed differential, i.e. the de Rham differential, by $d$. \\

By definition, the space of $p$-forms of degree $n$ on a derived stack $X$ is the mapping space $\Map_{\textbf{dSt}}\left( X, \_A^{p}(\bullet, n) \right)$ and the space of closed $p$-forms of degree $n$ on $X$ is $\Map_{\textbf{dSt}}\left( X, \_A^{p,cl}(\bullet, n) \right)$. Now the following proposition says that in the case where $X$ is a derived Artin  stack, the spaces of shifted differential forms are spaces of sections of quasi-coherent sheaves on $X$.\\

\begin{Prop}[Proposition 1.14 in \cite{PTVV}]\
\label{Prop_1.14PTVV}
Let $X$ be a derived Artin stack over k and $\Ll_X$ be its cotangent complex over k. Then there is an equivalence

$$ \_A^{p}(X,n) \simeq \Map_{\textbf{QC}(X)} \left( \_O_X, \Lambda^p \Ll_X [n]\right).$$

\end{Prop}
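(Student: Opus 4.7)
The plan is to reduce to the affine case and then extend to arbitrary derived Artin stacks by descent. In the affine setting $X = \textbf{Spec}(A)$, one starts with the explicit identification $\textbf{DR}(A)^\# \simeq \Sym_A \Ll_A[-1](-1)$ as a graded CDGA (forgetting the mixed structure). Extracting the weight-$p$ component gives
$$\Sym_A^p\bigl(\Ll_A[-1](-1)\bigr) = \Sym_A^p\bigl(\Ll_A[-1]\bigr)(-p),$$
and in characteristic zero the décalage identification $\Sym_A^p(M[-1]) \simeq (\Lambda_A^p M)[-p]$ turns this into $(\Lambda_A^p \Ll_A)[-p](-p)$.

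The mapping space $\Map_{\textbf{cdga}^{gr}}(k[-n-p](-p), \textbf{DR}(A))$ is, by adjunction with the forgetful functor to $\textbf{dg}_k^{gr}$ (and using that the source sits in nonzero weight so only the multiplicatively trivial pieces of the target contribute), just the space of graded-complex maps from $k[-n-p](-p)$. Such a map picks out a cohomological degree $n$ cycle in $\Lambda_A^p \Ll_A$ up to homotopy, and one identifies this, via the universal property of $A$ as the unit in $A\text{-}\textbf{Mod}$, with
$$\Map_{A\text{-}\textbf{Mod}}\bigl(A,\Lambda_A^p \Ll_A[n]\bigr),$$
which is the RHS for $X = \textbf{Spec}(A)$. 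At this stage one also checks functoriality in $A$, so that both sides give sheaves of spaces on the site of derived affines over $X$.

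To pass to general derived Artin stacks, one picks a smooth affine atlas $u: U \to X$ and forms the Čech nerve $U_\bullet \to X$. Both sides satisfy smooth descent: the LHS since $\_A^p(\bullet, n)$ is a derived stack by construction (cf.\ \cite{PTVV}, \cite{CPTVV}), and the RHS because $\textbf{QC}(X) \simeq \lim \textbf{QC}(U_n)$ and the cotangent complex is compatible with smooth pullback in a way that lets one glue the local equivalences. The equivalence for $X$ is then the totalization of the affine-level equivalences, which are natural in $A$ by step one.

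The main obstacle I expect is bookkeeping: the cohomological shift $[-1]$ inside $\Sym$, the weight shift $(-1)$, and the shifts $[-n-p]$ and $(-p)$ in the source must combine to land precisely on $\Lambda^p \Ll_A[n]$, and a sign or shift error is easy to make. The conceptual content of the proof is light once one accepts the graded-commutative décalage in characteristic zero and the two descent facts; the real work is ensuring that the definitions of $\textbf{DR}$, the shift conventions, and the grading on $\Lambda^p$ are consistent. Once those are pinned down, both the affine computation and the descent step are essentially formal.
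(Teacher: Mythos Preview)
The paper does not give its own proof of this proposition; it is quoted verbatim from \cite{PTVV} and used as a black box. There is therefore no proof in the paper to compare your attempt against.

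That said, your sketch has the right overall shape, and the affine step is essentially correct: d\'ecalage in characteristic zero identifies the weight-$p$ piece of $\textbf{DR}(A)^\#$ with $(\Lambda^p \Ll_A)[-p](-p)$, and unwinding the shifts yields the desired mapping space. The descent step, however, is where the real content of the proposition lies, and your treatment glosses over the difficulty. For a smooth map $u: U \to X$ one has $u^* \Ll_X \not\simeq \Ll_U$ in general (the relative cotangent complex $\Ll_{U/X}$ is nonzero), so ``the cotangent complex is compatible with smooth pullback'' does not say what you need. The affine-level equivalence on each $U_n$ identifies $\_A^p(U_n,n)$ with sections of $\Lambda^p \Ll_{U_n}$, \emph{not} with sections of $u_n^*(\Lambda^p \Ll_X)$, and one must explain why the totalization of the former nonetheless computes global sections of $\Lambda^p \Ll_X$ on $X$. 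This is exactly where the Artin hypothesis enters and is the substance of the argument in \cite{PTVV}; it is not a formality. Your proposal would be complete once you supply this step (or cite it precisely), but as written the phrase ``in a way that lets one glue the local equivalences'' hides the only nontrivial part of the proof.
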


\begin{RQ} More concretely, we have from \cite{Cal1} and \cite{CPTVV} an explicit description of (closed) $p$-forms of degree $n$ on a geometric derived stack $X$. A $p$-form of degree $n$ is given by a global section $\omega \in \textbf{DR}(X)_{(p)} [n+p]  \simeq \Rr \Gamma \left(\left( \bigwedge^p \Ll_X \right) [n] \right) $ such that $\delta \omega = 0$. A closed $p$-form of degree $n$ is given by a semi-infinite sequence $\omega = \omega_0 + \omega_1 + \cdots$ with $\omega_i  \in \textbf{DR}_{(p+i)} [n+p]= \Rr \Gamma \left( \left( \bigwedge^{p+i}  \Ll_X \right) [n-i]\right)$ such that $\delta \omega_0 = 0$ and $d \omega_{i} = \delta \omega_{i+1}$. 

Equivalently, being closed means that $\omega$ is closed for the total differential $D = \delta + d$ in the bi-complex $\textbf{DR}(X)_{\geq p}[n] \simeq \Rr \Gamma \left( \prod_{i \geq 0} \left( \bigwedge^{p+i} \Ll_X \right) [n] \right)$ whose total degree is given by $n+p+i$. Note that the conditions imposed on $\omega$ are equivalent to saying that $\omega$ is a cocycle of degree $n+p$ for the total differential. \\

In general, we can also describe the spaces of (closed) differential forms as $\_A^{p}(X, n) \simeq \left| \textbf{DR}_{(p)}(X)[n+p] \right|$ 
and $\_A^{p,cl}(X, n) \simeq \left| \prod_{i\geq p} \textbf{DR}_{(p+i)}(X)[n+p] \right| $, where $\prod_{i\geq 0} \textbf{DR}_{(p+i)}(X)[n]$ is endowed with the total differential.
\end{RQ}

\begin{RQ}
\label{RQ_RelativeShiftedForms}
Given a map of derived Artin stack $f : Y \rightarrow X$, we define $\_A^{p,(cl)}(Y/X, n)$, the space of $n$-shifted (closed) $p$-forms on $Y$ relative to $X$, to be the homotopy cofiber of the natural map $f^* : \_A^{p,(cl)}(X,n) \rightarrow \_A^{p,(cl)}(Y,n)$. For instance $n$-shifted relative $p$-forms are equivalent to the derived global sections of $\left( \bigwedge^p \Ll_{\faktor{Y}{X}} \right)[n]$, with the relative cotangent complex $\Ll_{\faktor{Y}{X}}$ defined as the homotopy cofiber of the natural map $f^*\Ll_X \rightarrow \Ll_Y$. We refer to \cite{CPTVV} for more details on the relative $n$-shifted (closed) $p$-forms and the relative version of the De Rham complex. 
\end{RQ}

We say that a $p$-form, $\omega_0$, of degree $n$ can be lifted to a closed $p$-form of degree $n$ if there exists a family of $(p+i)$-forms $\omega_i$ of degree $n-i$ for all $i > 0$, such that $\omega = \omega_0 + \omega_1 + \cdots $ is closed in $DR(X)_{\geq p}[n]$ (i.e. $D \omega = 0$). In this situation, we can see that $d \omega_0$ is in general not equal to $0$ but is homotopic to $0$ ($d\omega_0 =  D\left(- \sum_{i>0} \omega_{p+i} \right)$). The choice of such a homotopy is the same as a choice of a closure of the $p$-form of degree $n$. Being closed is therefore no longer a property of the underlying $p$-form of degree $n$ but a structure given by a homotopy between $d\omega_0$ and zero. The collection of all closures of a $p$-form of degree $n$ forms a space:

\begin{Def}
\label{Def_SpaceofKey}
Let $\alpha \in \_A^{p}(X,n)$ then the space of all closures of $\alpha$ is called the \defi{space of keys} of $\alpha$ denoted $\textbf{key}(\alpha)$. It is given by the homotopy pull-back:

\begin{equation}
\begin{tikzcd}
\textbf{key}(\alpha) \arrow[r] \arrow[d] & \_A^{p,cl}(X,n) \arrow[d]\\
\star \arrow[r, "\alpha"] & \_A^p(X,n)
\end{tikzcd}
\end{equation}
\end{Def}

\vspace{0.3cm}

The mixed differential of the de Rham graded mixed complex induces a map: 
\[d : \_A^p (X,n) \rightarrow \_A^{p+1, cl}(X,n)\] 

We are now turning toward symplectic geometry. We now know what are (shifted) closed 2-forms we only need to mimic the notion of non-degeneracy to define symplectic structures. 

\begin{Def}[Non-Degenerate 2-Form of Degree $n$] For a derived Artin $n$-stack $X$, the cotangent complex $\Ll_X$ is dualisable. Therefore there is a tangent complex $\Tt_X = \Ll_X^\vee$. We say that a (closed) 2-form of degree $n$ is \defi{non-degenerate} if the (underlying) 2-form $\omega_0$ of degree $n$ induces a quasi-isomorphism:
$$\omega_0^\flat : \Tt_X \rightarrow \Ll_X[n]$$  

We denote by $\_A^{2, nd}(X,n) $ the subspace of $\_A^2(X,n)$ generated by the non-degenerate $n$-shifted $2$-forms. 
\end{Def}

\begin{Def}[Shifted Symplectic Forms]
\label{Def_SymplecticForms} A \defi{$n$-shifted symplectic structure} is a non-degenerate $n$-shifted closed 2-form on $X$. $n$-shifted symplectic structures form a space defined as the pullback: 
\[ \textbf{Symp}(X,n) := \_A^{2, nd}(X,n) \times_{\_A^2(X,n)} \_A^{2,cl}(X,n) \]

\end{Def}

The standard example of symplectic manifold is the cotangent bundle. In our setting, we can speak of $n$-shifted cotangent stacks. It is a derived stack defined as linear stack associated to $\Ll_X[n]$, $T^*[n]X := \Aa( \Ll_X[n] )$ (see Definition \ref{Def_LinearStacks}). It comes with a natural morphism $\pi_X : T^*[n]X \rightarrow X$. We refer to \cite{Cal2} for a general account of shifted symplectic geometry on the cotangent stack.   

\begin{Def}[Linear Stacks]
\label{Def_LinearStacks}
Given $\_F \in \textbf{QC}(X)$ a quasi-coherent sheaf over a derived Artin stack, we can construct a \defi{linear stack} denoted $\Aa (\_F)$, defined as a derived stack over $X$ by:
\[ \Aa(\_F) \left( f: \textbf{Spec}(A) \rightarrow X \right) := \textbf{Map}_{\textbf{A-\textbf{Mod}}} \left( A, f^* \_F \right)\]
\end{Def}
 
\begin{RQ}
\label{RQ_LinearStackAndSection}
A morphism $Y \rightarrow T^*[n]X$ is determined by the induced morphism $f : Y \rightarrow X$ (by composition with $\pi_X$) and a section $s : Y \rightarrow f^* T^*[n]X$ which corresponds to an element  $s \in \Map_{\textbf{QC}(Y)} \left( \_O_Y, f^* \Ll_X[n]\right)$ (see \cite{Cal2} Section 2 for more details). In the case of a section $s: X \rightarrow T^*[n]X$, we get the identity $\tx{Id}: X \rightarrow X$ and a section $s_1 \in \Map_{\textbf{QC}(X)} \left( \_O_X,  \Ll_X[n]\right) \simeq \_A^1 (X,n)$. This shows, using Proposition \ref{Prop_1.14PTVV}, that the space of sections of $T^*[n]X$ is exactly the space of $1$-forms of degree $n$ as expected.  
\end{RQ}

\begin{Ex}
\label{Ex_CotangentComplexisSymplectic}
As in the classical case, we can construct the canonical Liouville 1-form. Consider the identity $\Id : T^*[n]X \rightarrow T^*[n] X$. It is determined by the projection $\pi : T^*[n]X \rightarrow X$ and a section $\lambda_X \in \Map_{\textbf{QC}(T^*[n]X)} \left( \_O_{T^*[n]X}, \pi^* \Ll_{X}[n]\right)$. Since we have a natural map $\pi^* \Ll_{X}[n] \rightarrow \Ll_{T^*[n]X}[n]$, $\lambda_X$ induces a 1-form on $T^*[n]X$ called the tautological $1$-form. This $1$-form induces a closed $2$-form $d \lambda_X$ which happens to be non-degenerate (see \cite{Cal2} Section 2.2 for a proof of the non-degeneracy). 
\end{Ex}

This symplectic structure on the cotangent is universal in the sense that it satisfies the usual universal property.

\begin{Lem}
\label{Lem_UniversalPropTautological1Form}
Given a 1-form $\alpha : X \rightarrow T^*[n]X$, we have that $\alpha^* \lambda_X = \alpha$. 
\end{Lem}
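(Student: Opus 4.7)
The plan is to unwind everything through the universal property of $T^*[n]X = \Aa(\Ll_X[n])$ as a linear stack, and observe that the identity $\alpha^*\lambda_X = \alpha$ is essentially tautological once we correctly identify what both sides mean via the correspondence of Remark \ref{RQ_LinearStackAndSection}.

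First I would translate $\alpha$ into the data it corresponds to. By Remark \ref{RQ_LinearStackAndSection} and Proposition \ref{Prop_1.14PTVV}, a section $\alpha: X \to T^*[n]X$ is equivalent to a map $\alpha_1 \in \Map_{\textbf{QC}(X)}(\_O_X, \Ll_X[n])$, which is the $1$-form $\alpha$ itself. Similarly, by construction (Example \ref{Ex_CotangentComplexisSymplectic}), the identity $\Id : T^*[n]X \to T^*[n]X$ corresponds to the pair $(\pi, \lambda_X)$ with $\lambda_X : \_O_{T^*[n]X} \to \pi^*\Ll_X[n]$, and the tautological $1$-form $\widetilde{\lambda}_X : \_O_{T^*[n]X} \to \Ll_{T^*[n]X}[n]$ is obtained by post-composing $\lambda_X$ with the canonical map $\pi^*\Ll_X[n] \to \Ll_{T^*[n]X}[n]$.

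Next I would compute the pullback $\alpha^*\widetilde{\lambda}_X$. By definition, this is the composite
\[\_O_X \xrightarrow{\alpha^*\lambda_X} \alpha^*\pi^*\Ll_X[n] \longrightarrow \alpha^*\Ll_{T^*[n]X}[n] \longrightarrow \Ll_X[n].\]
Since $\pi \circ \alpha = \Id_X$, applying functoriality of the cotangent complex to the composition $X \xrightarrow{\alpha} T^*[n]X \xrightarrow{\pi} X$ shows that the composite of the last two arrows is the canonical equivalence $\alpha^*\pi^*\Ll_X \simeq \Ll_X$. Thus modulo this identification, $\alpha^*\widetilde{\lambda}_X$ coincides with $\alpha^*\lambda_X \in \Map(\_O_X, \Ll_X[n])$.

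Finally, I would conclude using naturality of the correspondence of Remark \ref{RQ_LinearStackAndSection}: pulling back the section data attached to the identity $\Id : T^*[n]X \to T^*[n]X$ along $\alpha$ gives the section data attached to $\Id \circ \alpha = \alpha$. On the one hand, this pulled back data is $\alpha^*\lambda_X$; on the other hand, the section data of $\alpha$ itself is $\alpha_1 = \alpha$. Hence $\alpha^*\widetilde{\lambda}_X = \alpha$. The only slightly delicate point, and the one I would spell out carefully, is the middle step with the cotangent cofiber sequence for $\pi \circ \alpha = \Id_X$, since one must verify that the canonical map $\alpha^*\pi^*\Ll_X \to \Ll_X$ really is the identity under the natural identification and not some twist.
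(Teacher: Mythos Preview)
Your proposal is correct and follows essentially the same approach as the paper: both arguments identify $\alpha$ and $\lambda_X$ with their section data via Remark \ref{RQ_LinearStackAndSection}, and then use that the section data attached to $\Id\circ\alpha$ is $\alpha^*\lambda_X$ while the section data attached to $\alpha$ is $\alpha_1$, so $\Id\circ\alpha=\alpha$ forces $\alpha^*\lambda_X=\alpha_1$. The paper packages the ``middle step'' (that pulling back $\lambda_X$ as a $1$-form on $T^*[n]X$ agrees with pulling back $\lambda_X$ as a section of $\pi_X^*T^*[n]X$) into a pair of commutative diagrams of stacks, whereas you write out the same identification directly at the level of maps of quasi-coherent sheaves using functoriality of the cotangent complex for $\pi\circ\alpha=\Id_X$; these are two presentations of the same verification.
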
 
 
 \begin{proof} 
 
 In general, if we take $f: X \rightarrow Y$, the pull-back of a $n$-shifted $1$-form, $\beta$, is described by: 
 
 $$\begin{tikzcd}
 T^*[n]X \arrow[dr, shift left] & f^* T^*[n] Y \arrow[l,"(df)^*"'] \arrow[d, shift left] \arrow[r] & T^*[n] Y \arrow[d, shift left] \\
  & X \arrow[ul, dashed, shift left, "f^* \beta"] \arrow[u, shift left, dashed] \arrow[r, "f"] & Y \arrow[u, dashed, shift left, " \beta"] 
 \end{tikzcd}
 $$
 
 Taking into account the fact that $\lambda$ factors through $\pi_X^* T^*[n]X$, we consider the following diagram:

$$\begin{tikzcd}
 T^*[n] X & \alpha^* T^* T^*[n] X \arrow[l, "(d\alpha)^*"'] \arrow[r] & T^* T^*[n] X   \\
  & T^*[n] X = \alpha^* \pi_X^* T^*[n] X \arrow[ul, dashed, "\tx{Id}"] \arrow[r] \arrow[u, dashed, "(d\pi_X)^*"'] \arrow[d, shift left] & \pi_X^* T^*[n] X  \arrow[d, shift left] \arrow[u, "(d\pi_X)^*"] \\
 &X \arrow[u, dashed, shift left, "\tilde{\lambda}"] \arrow[r, "\alpha"]  & T^*[n] X \arrow[u, dashed, shift left, "\lambda"]
\end{tikzcd} 
 $$
 
 This proves that the pull-back along $\alpha$ of $\lambda_X$ seen as a 1-form of degree $n$ on $T^*[n]X$ is the same as the pull-back along $\alpha$ of the section $\lambda_X : T^*[n]X \rightarrow \pi_X^* T^*[n]X$.\\
 
 We denote by $\alpha_1$ the associated section in $\Map_{\textbf{QC}(X)} \left( \_O_X, \Ll_X[n] \right)$ of degree $n$. There is a one-to-one correspondence between sections of $\pi_X : T^*[n]X \rightarrow X$ and points of $\Map_{\textbf{QC}(X)} \left( \_O_X, \Ll_X[n] \right)$.
  Now we use the fact that $\tx{Id}\circ \alpha = \alpha$: \begin{itemize}
  \item On the one hand, $\alpha$ is completely described by $\alpha_1 \in \Map_{\textbf{QC}(X)} \left( \_O_X, \Ll_X[n] \right)$. 
  
  \item On the other hand, the map $\tx{Id}: T^*[n] X \rightarrow T^*[n]X$ is described by the projection $\pi : T^*[n]X \rightarrow X$ and the section $\lambda_X \in \Map_{\textbf{QC}(T^*[n]X)} \left( \_O_{T^*[n]X}, \pi_X^* \Ll_X \right)$. Therefore the composition $\tx{Id} \circ \alpha$ is also a section of $\pi_X$ and is described by $ \alpha^* \lambda_X \in \Map_{\textbf{QC}(X)} \left( \_O_X, \Ll_X[n] \right)$.
  \end{itemize}

 This proves that $\alpha^* \lambda_X = \alpha_1$. Since these maps characterise the sections of $\pi_X$ they represent, we have $\alpha^* \lambda_X = \alpha$.  
 \end{proof}

 \subsection{Lagrangian Structures}  
 \label{Sec_LagrangianStructures} 
We recall from  \cite{PTVV} the definition and standard properties of Lagrangian structures. We also provide proves of some results which are well known to the expert but are not written as far as we know.  

\begin{Def}[Isotropic Structures]
\label{Def_DerivedIsotropicStrucuture}
Let $ f : L \rightarrow X$ be a map of derived Artin stacks. An \defi{isotropic structure on $f$} is a homotopy, in $\_A^{2,cl}(L,n)$, between $f^* \omega$ and $0$ for some $n$-shifted symplectic structure $\omega : \star \rightarrow \textbf{Symp}(X,n)$. Isotropic structures on $f$ form a space described by the homotopy pull-back:

$$ \begin{tikzcd}
\textbf{Iso}(f,n) \arrow[r] \arrow[d] & \textbf{Symp}(X,n) \arrow[d, "f^*"] \\
\star \arrow[r, "0"] & \_A^{2,cl}(L,n)
\end{tikzcd}$$

If we fix a given $n$-shifted symplectic structure $\omega:  \star \rightarrow \textbf{Symp}(X,n)$, we can define the space of isotropic structures on $f$ at $\omega$ defined by:

\[\textbf{Iso}(f, \omega) := \textbf{Iso}(f,n)\times_{\textbf{Symp}(X,n)}\star \simeq \star \times_{0, \ \_A^{2,cl}(X,n), \ f^*\omega} \star  \]
\end{Def}

\begin{RQ}
\label{RQ_DescriptionIsotropicStructures}
More explicitly, an isotropic structure is given by a family of forms of total degree $(p+n-1)$, $(\gamma_i)_{i \in \Nn}$ with $ \gamma_i \in \textbf{DR}(L)_{(p+i)}[p+n+i-1]$, such that $\delta \gamma_0 = f^*\omega_0$ and $\delta \gamma_i + d \gamma_{i-1} = f^*\omega_i$. This can be rephrased as $D \gamma = f^* \omega$, thus $\gamma$ is indeed a homotopy between $f^*\omega$ and 0.     
\end{RQ}

\begin{Def}[Lagrangian Structures]

\label{Def_DerivedLagrangianStructure}
An isotropic structure $\gamma$ on $f: L \rightarrow X$ is called a \defi{Lagrangian structure on f} if the leading term, $\gamma_0$, viewed as an isotropic structure on the morphism $\Tt_L \rightarrow f^* \Tt_X$, is non-degenerate. We say that $\gamma_0$ is \defi{non-degenerate} if the following null-homotopic sequence (homotopic to 0 via $\gamma_0$) is fibered:

\begin{equation}
\label{Dia_NonDegeneracyLagrangian}
\begin{tikzcd}
\Tt_L \arrow[r] \arrow[rr, bend right, "(f^*\omega_0)^{\flat}"'] & f^*\Tt_X \simeq f^* \Ll_X[n] \arrow[r] &\Ll_L[n]
\end{tikzcd}
\end{equation}

The space of $n$-shifted Lagrangian structures on $f$ is denoted $\textbf{Lag}(f,n)$. There are natural morphisms of spaces $\textbf{Lag}(f,n) \rightarrow \textbf{Iso}(f,n) \rightarrow \textbf{Symp}(X,n)$.
\end{Def}

\begin{RQ}
\label{RQ_NonDegeneracyLagrangian}

To say that that sequence is fibered can be reinterpreted as a more classical condition involving the conormal. Since $\textbf{QC}(X)$ is a stable \icats , the homotopy fiber of $f^* \Ll_X [n] \rightarrow \Ll_L[n]$ is denoted $\Ll_{\faktor{L}{X}}[n-1]:=\Ll_f[n-1]$ and the non-degeneracy condition can be rephrased by saying that the natural map $\Theta_f : \Tt_L \rightarrow \Ll_f [n-1]$ is a quasi-isomorphism.   
\end{RQ}

\begin{RQ}
	To simplify the notations, we will abusively say that a morphism $f: X \rightarrow Y$ \underline{\textbf{is}} Lagrangian when we consider $f$ together with a fixed Lagrangian structure on $f$ and a fixed symplectic structure $\omega$.    
\end{RQ}
\begin{Ex}
\label{Ex_LagrangianSectionof1form}
A $1$-form of degree $n$ on an Artin stack $X$ is equivalent to a section $\alpha : X \rightarrow T^*[n]X$. This section is a Lagrangian morphism if and only if $\alpha$ admits a closure, i.e. $\textbf{Key}(\alpha)$ is non-empty. This is Theorem 2.15 in \cite{Cal2}.  
\end{Ex} 

\begin{Prop}
\label{Prop_SpaceofKeyequivalenttoSpaceofIsotropicStructures}
There is a canonical homotopy equivalence $\textbf{Iso}(\alpha) \rightarrow \textbf{Key}(\alpha)$ between the space of isotropic structures on the $1$-form $\alpha$ and the space of keys of $\alpha$.
\end{Prop}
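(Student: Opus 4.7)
The plan is to identify both $\textbf{Iso}(\alpha)$ and $\textbf{Key}(\alpha)$ with the same space of paths inside $\_A^{2,cl}(X,n)$. First, by Lemma~\ref{Lem_UniversalPropTautological1Form} one has $\alpha^*\lambda_X = \alpha$, and since the mixed differential $d$ is natural with respect to pullbacks of graded mixed complexes, this yields $\alpha^*(d\lambda_X) = d(\alpha^*\lambda_X) = d\alpha$. Unfolding Definition~\ref{Def_DerivedIsotropicStrucuture} with the canonical symplectic form $\omega = d\lambda_X$ coming from Example~\ref{Ex_CotangentComplexisSymplectic}, I would read the space $\textbf{Iso}(\alpha) := \textbf{Iso}(\alpha, d\lambda_X)$ as the homotopy pullback $\star \times_{0,\,\_A^{2,cl}(X,n),\,d\alpha} \star$, i.e.\ the space of paths from $d\alpha$ to $0$ in $\_A^{2,cl}(X,n)$.

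Next, I would produce a fibre sequence of spaces
\[
\_A^{1,cl}(X,n) \longrightarrow \_A^1(X,n) \xrightarrow{\ d\ } \_A^{2,cl}(X,n),
\]
the connecting map being the mixed differential recalled just before Definition~\ref{Def_SpaceofKey}. Using the explicit presentations $\_A^1(X,n) \simeq \left| \textbf{DR}(X)_{(1)}[n+1] \right|$ and $\_A^{1,cl}(X,n) \simeq \left| \prod_{i\geq 0} \textbf{DR}(X)_{(1+i)}[n+1] \right|$ from the remark following Proposition~\ref{Prop_1.14PTVV}, projection onto the weight-one piece gives a fibre sequence of cochain complexes whose fibre is $\prod_{i\geq 1} \textbf{DR}(X)_{(1+i)}[n+1]$; after reindexing this is canonically $\_A^{2,cl}(X,n)[-1]$, so passing to geometric realisations and rotating one step to the right yields the stated fibre sequence, and the explicit form of the total differential on weight-graded pieces identifies the connecting map with $d$.

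With this in hand, pasting homotopy pullbacks gives
\[
\textbf{Key}(\alpha) \simeq \_A^{1,cl}(X,n) \times_{\_A^1(X,n)} \{\alpha\} \simeq \{d\alpha\} \times_{\_A^{2,cl}(X,n)} \{0\},
\]
so $\textbf{Key}(\alpha)$ is also the space of paths from $d\alpha$ to $0$ in $\_A^{2,cl}(X,n)$, which matches the description of $\textbf{Iso}(\alpha)$ from the first paragraph; following the chain of homotopy pullbacks produces the canonical equivalence. The main obstacle is the careful construction of the fibre sequence in the second paragraph, in particular verifying with the correct sign and shift conventions that the connecting map is indeed the mixed differential $d$; once this is pinned down, the rest of the argument is a formal pullback-pasting diagram chase combined with the universal property $\alpha^*\lambda_X = \alpha$.
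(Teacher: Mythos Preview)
Your proposal is correct and follows essentially the same route as the paper. The paper organises the argument as a two-square pasting diagram with bottom row $\star \xrightarrow{\alpha} \_A^1(X,n) \xrightarrow{d_{dR}} \_A^{2,cl}(X,n)$: the left square is Cartesian by Definition~\ref{Def_SpaceofKey}, the right square being Cartesian is exactly your fibre sequence $\_A^{1,cl}(X,n) \to \_A^1(X,n) \xrightarrow{d} \_A^{2,cl}(X,n)$, and the outer pullback is $\textbf{Iso}(\alpha)$ via $\alpha^*\omega = d\alpha$; your ``main obstacle'' is precisely the step the paper waves through with the sentence about closed $1$-forms being $1$-forms whose de Rham differential is homotopic to~$0$.
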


\begin{proof}
\begin{equation}
\begin{tikzcd}
\textbf{key}(\alpha) \arrow[r] \arrow[d] & \_A^{1,cl}(X,n) \arrow[r] \arrow[d] & \star \arrow[d, "0"]\\
\star \arrow[r,"\alpha"]& \_A^1(X,n) \arrow[r, "d_{dR}"] & \_A^{2,cl}(X,n)
\end{tikzcd}
\end{equation}

The leftmost square is Cartesian by definition of $\textbf{key}(\alpha)$ in Definition \ref{Def_SpaceofKey}. By definition, the pull-back of the outer square is $\textbf{Iso}(\alpha)$ because $d_{dR}\alpha = \alpha^* \omega$ (by universal property of the Liouville 1-form, Lemma \ref{Lem_UniversalPropTautological1Form}). 
It turns out that the rightmost square is also Cartesian. This is simply saying that the space of closed $1$-forms of degree $n$ is the same as the space of $1$-forms of degree $n$ whose de Rham differential is homotopic to $0$. We obtain that $\textbf{key}(\alpha)$ and $\textbf{Iso}(\alpha)$ are both pull-backs of the outer square and therefore are canonically homotopy equivalent. 
\end{proof}

 \begin{RQ}
 \label{RQ_Closed1FormAreNonDegenerateIsotropicStructure}
It turns out that Theorem 2.15 in \cite{Cal2} says that all the isotropic structures on $\alpha$ (or equivalently the lifts of $\alpha$ to a closed form) are in fact non-degenerate, which implies the statement in Example \ref{Ex_LagrangianSectionof1form} and even that the space of Lagrangian structures on $\alpha$ is equivalent to the space of keys of $\alpha$. 
 \end{RQ}

 \begin{Lem}[Example 1.26 in \cite{Cal1}]
 \label{Lem_LagrangianOveraPoint}
 Consider the map $X \rightarrow \star_n$ where $\star_n$ is the point endowed with the canonical $n$-shifted symplectic structure given by 0. Then a Lagrangian structure on this map is equivalent to an $(n-1)$-shifted symplectic structure on $X$. \\
 \end{Lem}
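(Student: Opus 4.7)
The plan is to unfold the definitions of Lagrangian, isotropic and symplectic structures for the map $f : X \to \star_n$ and to observe that everything degenerates because $\star_n$ has vanishing cotangent complex; this should mechanically produce an $(n-1)$-shifted symplectic structure on $X$ from the Lagrangian data, and vice versa.

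First I would identify the space of isotropic structures on $f$ at the symplectic form $0$. By Definition~\ref{Def_DerivedIsotropicStrucuture}, this is the homotopy pullback $\star \times_{\_A^{2,cl}(X,n)} \star$ where both maps are zero (since $f^*0 = 0$). This pullback is the based loop space $\Omega_0 \_A^{2,cl}(X,n)$. Using the explicit description $\_A^{2,cl}(X,m) \simeq \bigl| \prod_{i \geq 0} \textbf{DR}(X)_{(2+i)}[m+2] \bigr|$ recalled in the earlier remark, looping shifts the total degree by one, giving $\Omega_0 \_A^{2,cl}(X,n) \simeq \_A^{2,cl}(X, n-1)$. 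Concretely, a loop at $0$ is a null-homotopy of $0$, i.e.\ a cocycle of total degree $n+1$ in $\textbf{DR}(X)_{\geq 2}$, which is exactly the data of an $(n-1)$-shifted closed $2$-form on $X$.

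Next I would trace how the non-degeneracy condition of Definition~\ref{Def_DerivedLagrangianStructure} transports across this equivalence. Since $\Ll_{\star_n} \simeq 0$, the null-homotopic sequence
\[
\Tt_X \longrightarrow f^*\Tt_{\star_n} \simeq 0 \longrightarrow \Ll_X[n]
\]
is fibered if and only if the induced connecting map $\Tt_X \to \Omega \Ll_X[n] \simeq \Ll_X[n-1]$ is a quasi-isomorphism. Under the identification of the previous paragraph, this connecting map is precisely the $\flat$-map associated to the leading term of the isotropic datum $\gamma$, now viewed as a $2$-form of degree $n-1$ on $X$. Hence the Lagrangian condition on $f$ corresponds exactly to non-degeneracy of the associated closed $2$-form of degree $n-1$ on $X$.

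Combining the two steps yields the asserted equivalence between Lagrangian structures on $f : X \to \star_n$ and $(n-1)$-shifted symplectic structures on $X$. I expect the main subtle point to be keeping the shift conventions consistent when identifying $\Omega_0 \_A^{2,cl}(X,n)$ with $\_A^{2,cl}(X, n-1)$, and verifying that the ``leading term'' projection $\_A^{2,cl}(X,\cdot) \to \_A^2(X,\cdot)$ is compatible with this identification so that the two $\flat$-maps really do agree; everything else is a direct unwinding of definitions.
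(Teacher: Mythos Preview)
Your proposal is correct and follows essentially the same route as the paper: identify isotropic structures on $f$ with self-homotopies of $0$ in $\_A^{2,cl}(X,n)$, hence with $(n-1)$-shifted closed $2$-forms, and then match the two non-degeneracy conditions. The one place where the paper goes further than you is precisely the point you flag as subtle: the paper actually verifies that the induced map $\Tt_X \to \Ll_X[n-1]$ coming from the Lagrangian criterion is $\gamma_0^{\flat}$, by strictifying the homotopy-commutative triangle $\Tt_X \to 0 \to \Ll_X[n]$ (with homotopy $\gamma_0$) to a strict triangle through $\Ll_X[n-1]\oplus\Ll_X[n]$ and reading off the map to the fiber. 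You correctly isolate this as the key check but leave it as an expectation; otherwise the arguments coincide.
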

 \begin{proof}

Pick an isotropic structure $\gamma$ on $p$. We know that $\gamma$ is a homotopy between 0 and 0 which means that $D\gamma = 0$. Therefore $\gamma$ is a closed 2-form of degree $n-1$. We want to show that $\gamma$ is non-degenerate as an isotropic structure if and only if it is non-degenerate as a closed 2-form on $X$. The non-degeneracy of the Lagrangian structure, as described in Remark \ref{RQ_NonDegeneracyLagrangian}, corresponds to the requirement that the natural map $ \Tt_X \rightarrow \Ll_X[n-1]$ is a quasi-isomorphism. This map depends on $\gamma_0$ and we want to show that this map is in fact $\gamma_0^{\flat}$. This map is the natural map that fits in the following homotopy commutative diagram:

$$ \begin{tikzcd}[row sep = small]
\Tt_X \arrow[r]\arrow[d] & \Ll_X[n-1] \arrow[r] \arrow[d] & 0 \arrow[d] \\
0 \arrow[r, "0^{\flat}"] & 0 \arrow[r] & \Ll_X [n] 
\end{tikzcd}$$
  We can show that by strictifying the homotopy commutative diagram:

$$\begin{tikzcd}[row sep = small]
\Tt_X \arrow[dr] \arrow[drr, "p^*\omega^{\flat}=0"] & &  \\
 & 0 \arrow[r]& \Ll_X[n]
\end{tikzcd}
$$
Note that this diagram is already commutative but we see it as homotopy commutative using the homotopy $\gamma_0$. We use the homotopy $\gamma_0$ to strictify the previous diagram and we obtain:

$$\begin{tikzcd}[row sep = small]
\Tt_X \arrow[dr, "\gamma_0^{\flat} + 0"'] \arrow[drr, "p^*\omega^{\flat}=0"] & &  \\
 & \Ll_X[n-1]\oplus \Ll_X[n] \arrow[r, "\tx{pr}"']& \Ll_X[n]
\end{tikzcd}
$$

The homotopy fiber and also strict fiber of the projection $\tx{pr} : \Ll_X[n-1] \oplus \Ll_X[n] \rightarrow \Ll_X[n]$ is $\Ll_X [n-1]$, and therefore the natural map we obtain is $\gamma_0^{\flat} : \Tt_X \rightarrow \Ll_X[n-1]$.\\

Since the non-degeneracy condition of the isotropic structure $\gamma$ is the same as saying that the map $\gamma_0^{\flat}$ is a quasi-isomorphism, we have shown that an isotropic structure $\gamma$ is an $(n-1)$-shifted symplectic structure on $X$ if and only if it is non-degenerate as an isotropic structure on $X \rightarrow \star_n $.
 \end{proof}

 \begin{Def}[Lagrangian Correspondence, \cite{Cal4}]
 \label{Def_LagrangianCorrespondence}
 
 Let $X$ and $Y$ be derived Artin stacks with $n$-shifted symplectic structures. A \defi{Lagrangian correspondence} from $X$ to $Y$ is given by a derived Artin stack $L$ with morphims 
 
 $$ \begin{tikzcd} 
 &L \arrow[dl] \arrow[dr]& \\
   X & & Y
   \end{tikzcd}$$
   and a Lagrangian structure on the map $L \rightarrow X \times \widebar{Y}$ where $X \times \widebar{Y}$ is endowed with the $n$-shifted symplectic structure $\pi_X^* \omega_X - \pi_Y^* \omega_Y$. For example, a Lagrangian structure on $L \rightarrow X$ is equivalent to a Lagrangian correspondence from $X$ to $\star$.
 \end{Def}

 As explained in \cite{Cal4} Section 4.2.2, these Lagrangian correspondences can be composed. If we take $X_0$, $X_1$ and $X_2$ derived Artin stacks with symplectic structures and $L_{01}$ and $L_{12}$ Lagrangian correspondences from respectively $X_0$ to $X_1$ and $X_1$ to $X_2$. We can produce a Lagrangian correspondence $L_{02}$ from $X_0$ to $X_2$ by setting $L_{02} := L_{01} \times_{X_1} L_{12}$. 
 
 $$\begin{tikzcd}
  & & L_{02} \arrow[dr] \arrow[dl] & & \\
  &L_{01} \arrow[dr] \arrow[dl] & & L_{12} \arrow[dr] \arrow[dl]&  \\
  X_0& &X_1 & &X_2
 \end{tikzcd}$$

 \subsection{Lagrangian Fibration}\label{sec:lagrangian-fibration}\

We recall in this section the definition and standard properties of Lagrangian fibrations (\cite{Cal1} and \cite{Cal2}).  

\begin{Def}
\label{def: lagrangian fibration}
Let $f : Y \rightarrow X$ be a map of derived Artin stacks and $\omega$ a symplectic structure on $Y$. A \defi{Lagrangian fibration} on $f$ is given by: 

\begin{itemize}
\item A homotopy, denoted $\gamma$, between ${\omega}_{/X}$ and $0$, where ${\omega}_{/X}$ is the image of $\omega$ under the natural map $\_A^{2,cl}(Y,n) \rightarrow \_A^{2,cl}(Y/X, n)$ (see Remark \ref{RQ_RelativeShiftedForms}) for some $n$-shifted symplectic structure $\omega : \star \rightarrow \textbf{Symp}(Y,n)$. This forms a space of isotropic fibrations: 

\[ \begin{tikzcd}
\textbf{IsoFib}(f,n)  \arrow[r]  \arrow[d] & \textbf{Symp}(Y,n) \arrow[d] \\
\star \arrow[r, "0"] & \_A^{2,cl} ( Y/X, n) 
\end{tikzcd}\]

\item A non-degeneracy condition which says that the following sequence (homotopic to 0 via $\gamma_0$) is fibered: 

\begin{equation*}
\label{Eq_LagrangianFiberSequence}
\Tt_{Y/X} \rightarrow \Tt_Y \simeq \Ll_Y [n] \rightarrow \Ll_{Y/X}[n]  
\end{equation*}
\end{itemize}

In particular, the non-degeneracy condition can be rephrased by saying that there is a canonical quasi-isomorphism $\alpha_f : \Tt_{Y/X} \rightarrow f^*\Ll_{X}[n]$ (similar to the criteria for Lagrangian morphism in Remark \ref{RQ_NonDegeneracyLagrangian}) that makes the following diagram commute: 

\begin{equation}
\label{Dia_LagrangianFibrationNonDegeneracy}
\begin{tikzcd}
\Tt_{\faktor{Y}{X}} \arrow[d] \arrow[r, "\alpha_f"] & f^* \Ll_X[n] \arrow[d] \arrow[r] & 0 \arrow[d] \\
\Tt_Y \arrow[r, "\omega^{\flat}"] & \Ll_Y[n] \arrow[r] & \Ll_{\faktor{Y}{X}}[n] 
\end{tikzcd}
\end{equation}

The subspace of $\textbf{IsoFib}(f,n)$ generated by the non-degenerate objects is the space of Lagrangian fibration structures on $f$ and is denoted by $\textbf{LagFib}(f,n)$. There are natural maps $\textbf{LagFib}(f,n) \rightarrow \textbf{IsoFib}(f,n) \rightarrow \textbf{Symp}(Y,n)$.\\

Similarly to the Lagrangian case, we can fix a $n$-shifted symplectic structure on $Y$ and define Lagrangian and isotropic fibration of $f$ at a given $\omega$: 

\[ \textbf{IsoFib}(f, \omega) = \textbf{IsoFib}(f,n) \times_{\textbf{Symp}(Y,n)} \star  \simeq \star \times_{0,\_A^{2,cl}(Y/X,n), \omega_{/X}} \star  \]  
\end{Def} 

\begin{RQ}
	To simplify the notations, we will abusively say that a morphism $f: X \rightarrow Y$ \underline{\textbf{is}} a Lagrangian fibration when we consider $f$ together with a fixed shifted symplectic structure $\omega$ and a fixed structure of Lagrangian fibration on $f$ at $\omega$.    
\end{RQ}

\begin{Ex}
\label{Ex_CotangentLagrangianFibration}
The natural projection $\pi_X : T^*[n]X \rightarrow X$ is a Lagrangian fibration.  The Liouville 1-form is a section of $\pi_X^* \Ll_X[n]$ which is part of the fiber sequence: 

$$ \pi_X^* \Ll_X[n] \rightarrow \Ll_{T^*[n]X} [n] \rightarrow \Ll_{\faktor{T^*[n]X}{X}}[n]$$

Thus the 1-form induced by $\lambda_X $ in $\Ll_{\faktor{T^*[n]X}{X}}[n]$ is homotopic to 0. The non-degeneracy condition is more difficult and is proven in Section 2.2.2 of \cite{Cal2}. It turns out that the morphism expressing the non-degeneracy condition, $\alpha_{\pi_X}$, is given by a canonical construction (Proposition \ref{Prop_RelativeCotangentComplexForLinearStacks}) which does not depend on the symplectic structure. This is the content of Proposition \ref{Prop_CotangentNonDegeneracyFibrationIsNatural}.  \\
 
\end{Ex}
\begin{Lem}
\label{Lem_LagragianFibrationFromAPoint}
Let $x: \star_n \rightarrow X$ be a point of $X$. Then, given a Lagrangian fibration structure on $x$, the non-degeneracy condition is given by a quasi-isomorphism $x^* \Tt_X \rightarrow x^*\Ll_X[n+1]$. 
\end{Lem}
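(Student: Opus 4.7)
The proof is essentially a direct unraveling of Definition \ref{def: lagrangian fibration} in the case where the source is a point. The plan is to specialise the non-degeneracy diagram \ref{Dia_LagrangianFibrationNonDegeneracy} to $f = x : \star_n \rightarrow X$ and compute the relative (co)tangent complex of a point.

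First I would observe that, since $\star$ is the final object, $\Ll_\star \simeq 0$ and hence $\Tt_\star \simeq 0$. From the cofiber sequence of cotangent complexes
\[ x^*\Ll_X \longrightarrow \Ll_\star \longrightarrow \Ll_{\star/X} \]
I therefore get a canonical equivalence $\Ll_{\star/X} \simeq x^* \Ll_X[1]$. Dualising yields $\Tt_{\star/X} \simeq x^* \Tt_X[-1]$.

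Next I would plug $Y = \star_n$ into the commutative square \ref{Dia_LagrangianFibrationNonDegeneracy}. Both $\Tt_Y$ and $\Ll_Y$ vanish, so the bottom row of the diagram becomes $0 \rightarrow 0 \rightarrow \Ll_{\star/X}[n] \simeq x^*\Ll_X[n+1]$, and the symplectic structure $\omega = 0$ on $\star_n$ means the isotropic fibration structure is automatic. The only non-trivial datum that remains is the map $\alpha_x : \Tt_{\star/X} \rightarrow x^*\Ll_X[n]$. Using the equivalence from the previous step, this map is the same as a morphism
\[ x^* \Tt_X[-1] \longrightarrow x^*\Ll_X[n], \]
or, after shifting, a morphism $x^* \Tt_X \rightarrow x^* \Ll_X[n+1]$. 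The non-degeneracy condition of the Lagrangian fibration is by definition the requirement that $\alpha_x$ be a quasi-isomorphism, which is exactly the quasi-isomorphism in the statement.

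There is no substantial obstacle here: the argument is entirely formal once one recognises that the relative cotangent of a point against $X$ is just a shift of $x^*\Ll_X$, so the content of the lemma is simply the identification of the non-degeneracy map $\alpha_x$ under this shift.
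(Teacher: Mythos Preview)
Your argument is correct and follows essentially the same route as the paper: identify $\Ll_{\star/X}\simeq x^*\Ll_X[1]$ and $\Tt_{\star/X}\simeq x^*\Tt_X[-1]$ from the cofiber sequence, and then read off that the non-degeneracy map becomes a morphism $x^*\Tt_X\to x^*\Ll_X[n+1]$. The only difference is one of emphasis: the paper first recognises the isotropic fibration datum (a self-homotopy of $0$) as a relative closed $2$-form $\gamma\in\_A^{2,cl}(\star_n/X,n-1)$, and, mirroring the proof of Lemma~\ref{Lem_LagrangianOveraPoint}, identifies the non-degeneracy map with $\gamma_0^\flat$; you instead invoke the map $\alpha_x$ from Diagram~\eqref{Dia_LagrangianFibrationNonDegeneracy} directly. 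One small wording caveat: saying the isotropic fibration structure is ``automatic'' is slightly misleading---$\omega=0$ makes the \emph{existence} of a null-homotopy automatic, but the choice of homotopy is still genuine data (precisely the $\gamma$ above), and $\alpha_x$ depends on it.
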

\begin{proof}
The Lagrangian fibration structure on $\star_n \rightarrow X$ is a homotopy between 0 and itself in $\_A^{2,cl}(\faktor{\star_n}{X},n)$. As in the proof of Lemma \ref{Lem_LagrangianOveraPoint}, this is given by an element $\gamma \in \_A^{2,cl}(\faktor{\star_n}{X},n-1)$. Similarly to what was done in the proof of Lemma \ref{Lem_LagrangianOveraPoint}, we can show that $\gamma$ is non-degenerate as a Lagrangian fibration if and only if it is non-degenerate as a closed 2-form of degree $n$. Again it boils down to the fact that the natural morphism in the non-degeneracy criteria for Lagrangian fibrations is in fact $\gamma_0^{\flat} : \Tt_{\faktor{\star_n}{X}} \rightarrow \Ll_{\faktor{\star_n}{X}}[n-1]$.  

Moreover, we have natural equivalences, $\Tt_{\faktor{\star_n}{X}} \simeq x^*\Tt_X [-1]$ and $ \Ll_{\faktor{\star_n}{X}}[n-1] \simeq x^*\Ll_X[n]$ because the sequence
$$ \begin{tikzcd}
\Tt_{\faktor{\star_n}{X}} \arrow[r] & \Tt_{\star_n} \simeq 0 \arrow[r] &  x^* \Tt_X[n]
\end{tikzcd} $$

is fibered. This concludes the proof. 
\end{proof}

\subsection{Relative Cotangent Complexes of Linear Stacks}
\label{Sec_RelativeCotangentComplex}
 This section is devoted to the study of relative cotangent complexes of linear stacks. Given $\_F \in \textbf{QC}(X)$ a dualisable quasi-coherent sheaf over a derived Artin stack $X$, we consider its associated linear stack, $\Aa (\_F)$ (see Definition \ref{Def_LinearStacks}) and the goal of this section is to describe $\Ll_{\faktor{\Aa (\_F)}{X}}$ and its functoriality in $\_F$ and $X$.

\begin{Prop} 
\label{Prop_RelativeCotangentComplexForLinearStacks}
 Let $X$ be a derived Artin stack and $\_F \in \textbf{QC}(X)$ a dualisable quasi-coherent sheaf on $X$. We denote $\pi_X : \Aa (\_F) \rightarrow X$ the natural projection. Then we have:
 
 \[ \Ll_{\pi_X} \simeq \Ll_{\faktor{\Aa(\_F)}{X}} \simeq \pi_X^* \_F^\vee \]  
 \end{Prop}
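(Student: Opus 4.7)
The plan is to identify $\Aa(\_F)$ with a relative affine scheme over $X$ and invoke the standard formula for the cotangent complex of a free symmetric algebra. For an affine test $f:\textbf{Spec}(A)\to X$, dualisability of $\_F$ yields natural equivalences
\[
\Map_{A\text{-}\textbf{Mod}}(A,f^*\_F)\;\simeq\;\Map_{A\text{-}\textbf{Mod}}(f^*\_F^\vee,A)\;\simeq\;\Map_{A\text{-}\textbf{cdga}}\bigl(\Sym_A(f^*\_F^\vee),A\bigr),
\]
the first coming from duality and the second from the universal property of the free cdga. Naturality in $A$ then identifies $\Aa(\_F)$, as a derived stack over $X$, with $\textbf{Spec}_X\bigl(\Sym_{\_O_X}(\_F^\vee)\bigr)$, and $\pi_X$ with its structure map.

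Granted this identification, the statement reduces to the standard fact in derived algebraic geometry that $\Ll_{\Sym_R(M)/R}\simeq \Sym_R(M)\otimes_R M$ for any $R\in\textbf{cdga}$ and any $R$-module $M$. Applied to $R=\_O_X$ and $M=\_F^\vee$, and then read off as a quasi-coherent sheaf on $\textbf{Spec}_X(\Sym_{\_O_X}(\_F^\vee))=\Aa(\_F)$, this is precisely $\pi_X^*\_F^\vee$, which gives the desired equivalence $\Ll_{\pi_X}\simeq \pi_X^*\_F^\vee$.

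The main technical obstacle is globalising the argument, since $X$ is not assumed affine. However, both the identification $\Aa(\_F)\simeq \textbf{Spec}_X(\Sym_{\_O_X}(\_F^\vee))$ and the symmetric-algebra cotangent formula are functorial in the base and compatible with smooth (and indeed fpqc) descent on $X$, so the claim can be checked on a smooth affine atlas of $X$, reducing it to the affine case just described. Naturality in $f$ then ensures that the local equivalences glue to the desired global equivalence. An alternative, more direct route is to bypass the explicit $\textbf{Spec}$ model and test the claim via the defining universal property of the cotangent complex: square-zero deformations of a section $g:\textbf{Spec}(A)\to\Aa(\_F)$ relative to $X$ by an $A$-module $M$ are controlled by $\Map_{A\text{-}\textbf{Mod}}(f^*\_F^\vee,M)$, from which $\Ll_{\pi_X}\simeq \pi_X^*\_F^\vee$ can be read off immediately.
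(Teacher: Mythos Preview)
Your ``alternative, more direct route'' at the end is exactly what the paper does: it works at an arbitrary $B$-point $y:\textbf{Spec}(B)\to\Aa(\_F)$ over $x:\textbf{Spec}(B)\to X$, identifies $\Hom_{B-\Mod}\bigl(y^*\Ll_{\faktor{\Aa(\_F)}{X}},M\bigr)$ for connective $M$ with the homotopy fibre of $\Hom_{\faktor{\textbf{dSt}}{X}}\bigl(\textbf{Spec}(B\oplus M),\Aa(\_F)\bigr)\to\Hom_{\faktor{\textbf{dSt}}{X}}\bigl(\textbf{Spec}(B),\Aa(\_F)\bigr)$ via the universal property of the cotangent complex, computes this fibre from the definition of $\Aa(\_F)$ and the splitting $p^*x^*\_F\simeq x^*\_F\oplus(x^*\_F\otimes_B M)$, and concludes by Yoneda.

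Your primary route via $\Aa(\_F)\simeq\textbf{Spec}_X\bigl(\Sym_{\_O_X}(\_F^\vee)\bigr)$ and the free-algebra cotangent formula is more conceptual when it applies, but it carries a subtlety worth flagging: the identification with a relative $\textbf{Spec}$ is only literally available when $\_F^\vee$ is connective. For general dualisable $\_F$ --- say $\_F=\_O_X[1]$, where $\Aa(\_F)\simeq B\Gg_{a}$ over $X$ is not relatively schematic, or $\_F=\Ll_X[n]$ with $n>0$ --- the algebra $\Sym_{\_O_X}(\_F^\vee)$ lies outside $\textbf{cdga}_{\leq 0}$ and the first step breaks down as written. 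Your chain of mapping-space equivalences still holds in the unbounded module category and the argument can be rescued, but making that precise essentially amounts to the square-zero computation, which is presumably why the paper goes that way from the start and never invokes the $\textbf{Spec}$ model.
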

\begin{proof}
We will show the result for any $B$-point $y : \textbf{Spec}(B) \rightarrow \Aa (\_F )$ and we write $x = \pi \circ y : \textbf{Spec}(B) \rightarrow X$. We will show that for all $M \in B-\Mod$ connective, we have 

\[ \Hom_{B-\Mod} \left( y^*\Ll_{\faktor{\Aa (\_F)}{X} }, M \right) \simeq \Hom_{B-\Mod} \left(x^* \_F^\vee, M \right) \] 

First we observe that $\Hom_{B-\Mod} \left( y^*\Ll_{\faktor{\Aa (\_F)}{X} }, M \right) $ is equivalent, using the universal property of the cotangent complex, to the following homotopy fiber at $y$:

\[  \tx{hofiber}_{y} \left( \Hom_{\faktor{\textbf{dSt}}{X}} \left( \textbf{Spec}(B \oplus M), \Aa (\_F ) \right)\rightarrow \Hom_{\faktor{\textbf{dSt}}{X}} \left( \textbf{Spec}(B), \Aa (\_F ) \right) \right) \]

with $B \oplus M$ denoting the square zero extension and $\textbf{Spec}(B \oplus M) \rightarrow X$ being the composition: 

\[ \begin{tikzcd}
\textbf{Spec}(B \oplus M) \arrow[r, "p"] & \textbf{Spec}(B)\arrow[r, "x"] & X 
\end{tikzcd}
\]

Thus a map in $ \Hom_{B-\Mod} \left( y^*\Ll_{\faktor{\Aa (\_F)}{X} }, M \right)$ is completely determined by a map 
\[ \Phi : \textbf{Spec}(B \oplus M) \rightarrow \Aa (\_F)\] making the following diagram commute: 

\[ \begin{tikzcd}
\textbf{Spec} (B) \arrow[d, "i"] \arrow[dr, "y"] & \\
\textbf{Spec}(B \oplus M) \arrow[r, "\Phi"] \arrow[d, "p"] &  \Aa( \_F) \arrow[d, "\pi_X"] \\
\textbf{Spec}(B) \arrow[r, "x"] & X
\end{tikzcd}\]

Thus, we obtain that $ \Hom_{B-\Mod} \left( y^*\Ll_{\faktor{\Aa (\_F)}{X} }, M \right)$ is equivalent to 
\[ \tx{hofiber}_{s_y} \left(  \Map_{B \oplus M - \Mod} \left( B \oplus M, p^* x^* \_F \right) \rightarrow \Map_{B  - \Mod} \left( B, x^* \_F \right)\right)\]

where $s_y \in \Map_{B  - \Mod} \left( B, x^* \_F \right)$ is the section associated to $y : \textbf{Spec}(B) \rightarrow \Aa (\_F )$. The map is then given by precomposition with $i^*$. We can now observe that $p^* x^* \_F = x^* \_F \oplus x^* \_F \otimes_B M $ and that

\[ \Map_{B \oplus M - \Mod} \left( B \oplus M, p^* x^* \_F \right) \simeq \Map_{B  - \Mod} \left( B , x^* \_F \oplus x^* \_F \otimes_B M \right) \]

We obtain  
\[ \Hom_{B-\Mod} \left( y^*\Ll_{\faktor{\Aa (\_F)}{X} }, M \right)\] 
\[ \simeq \tx{hofiber} \left(  \Map_{B - \Mod} \left( B , x^* \_F \oplus x^* \_F \otimes_B M \right) \rightarrow \Map_{B  - \Mod} \left( B, x^* \_F \right)\right)\]
\[ \simeq \Map_{B  - \Mod} \left( B, x^* \_F \otimes_B M \right) \simeq \Map_{B  - \Mod} \left( x^* \_F^\vee, M \right) \]

Now the result follows from the fact that the functor 

\[ \begin{tikzcd}[row sep=tiny]
B- \Mod \arrow[r] & \tx{Fun}\left( B-\Mod^{\leq 0}, \tx{sSet} \right) \\
N \arrow[r, mapsto] & \Map_{B-\Mod} \left( N, \bullet \right)
\end{tikzcd} \]\\

is fully faithful and the fact that everything we did is natural in $B$. 
\end{proof}

\begin{Lem}
\label{Lem_RelativeCotangentBaseRingChange}
Let $f: X \rightarrow Y$ be a morphism of derived Artin stacks. We consider $\_F \in \textbf{QC}(Y)$ dualisable. Then there is a commutative square: 
 
 \[ \begin{tikzcd}
  \Phi^* \Ll_{\faktor{\Aa ( \_F )}{X}} \arrow[r] \arrow[d, "\simeq"] &  \Ll_{\faktor{\Aa ( f^*\_F)}{Y}} \arrow[d, "\simeq"] \\
  \Phi^* \pi_Y^*\_F^\vee \arrow[r, "\simeq"] & \pi_X^* f^* \_F^\vee \end{tikzcd} \]

  with $\Phi$ the natural morphism in the following homotopy pull-back:
  
  \[ \begin{tikzcd}
  \Aa (f^* \_F) \simeq f^*\Aa(\_F) \arrow[r, "\Phi"] \arrow[d, "\pi_X"] & \Aa (\_F) \arrow[d, "\pi_Y"] \\
  X \arrow[r, "f"] & Y
  \end{tikzcd} \]
  
  and the lower horizontal equivalence $\Phi^* \pi_Y^* \_F^\vee \rightarrow \pi_X^* f^* \_F^\vee$ being the equivalence coming from the fact that $\pi_Y \circ \Phi \simeq f \circ \pi_X$.  
\end{Lem}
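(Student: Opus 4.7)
The plan is to verify commutativity by unwinding the square at an arbitrary $B$-point and appealing to the same universal-property argument that establishes the vertical equivalences in Proposition \ref{Prop_RelativeCotangentComplexForLinearStacks}. Since all four terms are dualisable quasi-coherent sheaves on $\Aa(f^*\_F)$, a homotopy between the two compositions around the square is detected by restricting along every $z : \textbf{Spec}(B) \rightarrow \Aa(f^*\_F)$ and then testing against every connective $B$-module $M$ via $\Map_{B-\Mod}(-,M)$.

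Fix such a $B$-point $z$, set $x := \pi_X \circ z$ and $y := \Phi \circ z$, and use the homotopy $\pi_Y \circ \Phi \simeq f \circ \pi_X$ to obtain $\pi_Y \circ y \simeq f \circ x$ as maps $\textbf{Spec}(B) \rightarrow Y$. The left vertical arrow, after applying $z^*$, is the equivalence of Proposition \ref{Prop_RelativeCotangentComplexForLinearStacks} applied to the $B$-point $y$ of $\Aa(\_F) \rightarrow Y$: it is computed by the square-zero extension argument and identifies $y^*\Ll_{\Aa(\_F)/Y}$ with $(\pi_Y \circ y)^*\_F^\vee \simeq x^* f^* \_F^\vee$. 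The right vertical arrow is the same proposition applied to the $B$-point $z$ of $\Aa(f^*\_F) \rightarrow X$, identifying $z^*\Ll_{\Aa(f^*\_F)/X}$ with $x^*(f^*\_F)^\vee = x^* f^* \_F^\vee$. The bottom horizontal arrow restricted along $z$ is the canonical identification $(\pi_Y \circ y)^*\_F^\vee \simeq (f \circ x)^*\_F^\vee$ coming from the witnessed homotopy, which is the identity on $x^*f^*\_F^\vee$.

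It remains to analyse the top arrow. For the Cartesian square of derived Artin stacks defining $\Aa(f^*\_F) \simeq f^*\Aa(\_F)$, the base-change map $\Phi^* \Ll_{\Aa(\_F)/Y} \rightarrow \Ll_{\Aa(f^*\_F)/X}$ is an equivalence (a standard property of the relative cotangent complex on a homotopy pullback). Tracing through the universal property: a map $\textbf{Spec}(B \oplus M) \rightarrow \Aa(f^*\_F)$ over $X$ extending $z$ is by the pullback description the same datum as a map $\textbf{Spec}(B \oplus M) \rightarrow \Aa(\_F)$ over $Y$ extending $y$, so the square-zero extension argument in the proof of Proposition \ref{Prop_RelativeCotangentComplexForLinearStacks} yields the same answer in both cases. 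Hence both routes around the square reduce to the identity on $x^*f^*\_F^\vee$, which gives the required homotopy.

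The main obstacle is purely one of coherent bookkeeping: identifying the top horizontal map as the base change equivalence and then checking that, under both applications of Proposition \ref{Prop_RelativeCotangentComplexForLinearStacks}, the chosen square-zero extension datum used to represent the cotangent complex by the universal property is transported correctly between the two sides. No additional input beyond Proposition \ref{Prop_RelativeCotangentComplexForLinearStacks} and the standard base-change property of the relative cotangent complex is needed.
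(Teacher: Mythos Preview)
Your proposal is correct and follows essentially the same approach as the paper: both test the square at an arbitrary $B$-point, apply $\Map_{B-\Mod}(-,M)$ for connective $M$, and unwind the square-zero-extension description from Proposition~\ref{Prop_RelativeCotangentComplexForLinearStacks} to see that the top arrow, read through the universal property, is the tautological identification $p^*(\pi_Y\circ y)^*\_F \simeq p^*(f\circ x)^*\_F$. The only cosmetic difference is that you phrase the analysis of the top arrow as an appeal to the abstract base-change equivalence for relative cotangent complexes along a homotopy pullback, whereas the paper computes it directly as the map on homotopy fibers induced by $\Hom_{\textbf{dSt}}(-,\Phi)$; the two descriptions coincide once unwound, so no substantive difference in strategy.
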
 
\begin{proof}
The first things we observe is that $\Aa (f^* \_F) \simeq f^* \Aa(\_F)$. We consider as before $B$-points: 

\[ \begin{tikzcd}
\textbf{Spec}(B) \arrow[r, "y"] \arrow[rr, bend left, "\tilde{y}"] \arrow[dr, "x"] \arrow[drr, bend right, "\tilde{x}"' ] & f^*\Aa( \_F) \arrow[r, "\Phi"] \arrow[d, "\pi_X"] & \Aa (\_F) \arrow[d, "\pi_Y"] \\
& X \arrow[r, "f"] & Y
\end{tikzcd} \]

We want to show that the following diagram is commutative: 

\begin{equation}
\label{Dia_1}
 \begin{tikzcd}
\Hom_{B-\Mod} \left( y^* \Ll_{\faktor{ \Aa (f^*\_F)}{X}}, M \right) \arrow[r] \arrow[d, "\simeq"] & \Hom_{B-\Mod} \left( \tilde{y}^* \Ll_{\faktor{ \Aa (\_F)}{Y}}, M \right) \arrow[d, "\simeq"] \\
\Hom_{B-\Mod} \left( y^* \pi_X^* f^* \_F^\vee, M \right)  \arrow[r] & \Hom_{B-\Mod} \left( \tilde{y}^* \pi_Y^* \_F^\vee, M \right)
\end{tikzcd}
 \end{equation}

Using the universal property of the cotangent complex, the top horizontal arrow is naturally equivalent to the map

\[ \begin{tikzcd}
\tx{hofiber}_{y} \left( \Hom_{\faktor{\textbf{dSt}}{X}} \left( \textbf{Spec}(B \oplus M), \Aa (f^* \_F) \right) \rightarrow  \Hom_{\faktor{\textbf{dSt}}{X}} \left( \textbf{Spec}(B), \Aa (f^* \_F) \right) \right) \arrow[d] \\
\tx{hofiber}_{\tilde{y}} \left( \Hom_{\faktor{\textbf{dSt}}{Y}} \left( \textbf{Spec}(B \oplus M), \Aa (\_F) \right) \rightarrow  \Hom_{\faktor{\textbf{dSt}}{Y}} \left( \textbf{Spec}(B), \Aa (\_F) \right) \right) 
\end{tikzcd} \]

induced by $\Hom_{\textbf{dSt}}\left( -, \Phi \right)$. A map $\psi : \textbf{Spec}(B \oplus M) \rightarrow \Aa(f^* \_F)$ in this homotopy fiber fits in the following commutative diagram:

\[ \begin{tikzcd}
\textbf{Spec}(B) \arrow[rd, "y"'] \arrow[rrd, "\tilde{y}"] \arrow[d, "i"] & & \\
\textbf{Spec}(B \oplus M) \arrow[d, "p"] \arrow[r, "\psi"'] & \Aa (f^* \_F ) \arrow[d, " \pi_X" ] \arrow[r, "\Phi"'] & \Aa (\_F) \arrow[d, "\pi_Y"] \\
\textbf{Spec}(B) \arrow[r, "x"] & X \arrow[r, "f"] & Y
\end{tikzcd} \] 

and the map between the homotopy fiber sends $\psi$ to $\Phi \circ \psi$. Since the underlying map of $\psi$ is $\pi_X \circ \psi : \textbf{Spec}(B \oplus M) \rightarrow X$ is $x \circ p$ and the underlying map of $\Phi \circ \psi$ is $\pi_Y \circ \Phi \circ \psi : \textbf{Spec}(B \oplus M) \rightarrow Y$ is $f\circ x \circ p = \tilde{x} \circ p$, this map between the homotopy fiber of derived stacks is therefore naturally equivalent to the map: 

\[ \begin{tikzcd}
\tx{hofiber}_{s_y} \left( \Map_{B \oplus M- \Mod} \left( B \oplus M, p^*x^* f^* \_F) \right) \rightarrow  \Hom_{B-\Mod} \left( B,  p^*x^* f^* \_F \right) \right) \arrow[d] \\
\tx{hofiber}_{s_{\tilde{y}}} \left( \Map_{B \oplus M- \Mod} \left( B \oplus M, p^*\tilde{x}^* \_F) \right) \rightarrow  \Hom_{B-\Mod} \left( B,  p^* \tilde{x}^* \_F \right) \right)
\end{tikzcd} \]

where $s_y $ and $s_{\tilde{y}}$ are the sections associated to $y$ and $\tilde{y}$ respectively. This map is in fact induced by the natural identification $ p^* \tilde{x}^* \_F \simeq  p^*x^* f^* \_F$ (since $\tilde{x} = f\circ x$). But following the steps of the proof of Proposition \ref{Prop_RelativeCotangentComplexForLinearStacks}, this map is naturally equivalent to the map 

\[ \Hom_{B-  \Mod} \left( y^* \pi_X^* f^*  \_F^\vee , M \right) \rightarrow \Hom_{B-  \Mod} \left( \tilde{y}^* \pi_Y^* \_F^\vee, M \right)\]

The natural equivalences we used are the natural equivalences used in the proof of Proposition \ref{Prop_RelativeCotangentComplexForLinearStacks} which proves that the Diagram \eqref{Dia_1} is commutative. Now the result follows once again from the fact that the functor 

\[ \begin{tikzcd}[row sep=tiny]
B- \Mod \arrow[r] & \tx{Fun}\left( B-\Mod^{\leq 0}, \tx{sSet} \right) \\
N \arrow[r, mapsto] & \Map_{B-\Mod} \left( N, \bullet \right)
\end{tikzcd} \]\\

is fully faithful and the fact that everything we did is natural in $B$. 
\end{proof}

\begin{Lem}
\label{Lem_RelativeCotangentNaturalityInFiber}
Let $X$ be a derived Artin stacks. We consider $\_F, \_G \in \textbf{QC}(X)$ dualisable and $h : \_F \rightarrow \_G$. Then there is a commutative square: 
 
 \[ \begin{tikzcd}
  \hat{h}^*\Ll_{\faktor{\Aa (\_G )}{X}} \arrow[r] \arrow[d, "\simeq"] &  \Ll_{\faktor{\Aa ( \_F)}{X}} \arrow[d, "\simeq"] \\
  \pi_X^* \_G^\vee \arrow[r, "\pi_X^* h^\vee"] &  \pi_X^* \_F^\vee \end{tikzcd} \]
  
  with $\hat{h} : \Aa ( \_G) \rightarrow \Aa (\_F)$ the map induced by $\_F$. 
\end{Lem}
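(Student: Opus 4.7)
The plan is to mimic the proof of Lemma \ref{Lem_RelativeCotangentBaseRingChange}: work pointwise on $B$-points, use the universal property of the relative cotangent complex, and trace the naturality of Proposition \ref{Prop_RelativeCotangentComplexForLinearStacks} through the fiber variable. Fix a $B$-point $y : \textbf{Spec}(B) \rightarrow \Aa(\_F)$, set $\tilde{y} := \hat{h} \circ y : \textbf{Spec}(B) \rightarrow \Aa(\_G)$, and write $x := \pi_X \circ y : \textbf{Spec}(B) \rightarrow X$; note that $\pi_X \circ \tilde{y} = x$ as well, since $\hat{h}$ lies above $X$.

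For any connective $M \in B-\Mod$, the universal property of the relative cotangent complex identifies $\Hom_{B-\Mod}(y^* \Ll_{\faktor{\Aa(\_F)}{X}}, M)$ and $\Hom_{B-\Mod}(\tilde{y}^* \Ll_{\faktor{\Aa(\_G)}{X}}, M)$ with homotopy fibers of the derivation-extension maps, and the candidate map $\hat{h}^* \Ll_{\faktor{\Aa(\_G)}{X}} \rightarrow \Ll_{\faktor{\Aa(\_F)}{X}}$ induces on these hom spaces nothing other than postcomposition with $\hat{h}$. Unwinding the definition of $\Aa(-)$ exactly as in the proof of Proposition \ref{Prop_RelativeCotangentComplexForLinearStacks}, both fibers simplify to $\Map_{B-\Mod}(B, x^*\_F \otimes_B M)$ and $\Map_{B-\Mod}(B, x^*\_G \otimes_B M)$ respectively, and postcomposition with $\hat{h}$ translates, after the splitting $p^* x^* \_F \simeq x^*\_F \oplus (x^*\_F \otimes_B M)$, into tensoring with $x^* h$.

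Next, I invoke the dualisability of $\_F$ and $\_G$ to rewrite these hom spaces via the adjunction $\Map_{B-\Mod}(B, x^*\_F \otimes_B M) \simeq \Map_{B-\Mod}(x^*\_F^\vee, M)$ (and analogously for $\_G$). Under this identification, tensoring with $x^* h$ becomes precomposition with $x^* h^\vee : x^*\_G^\vee \rightarrow x^*\_F^\vee$, which is exactly the pullback of $\pi_X^* h^\vee$ at the point $y$. A final application of the Yoneda-style fact that $N \mapsto \Map_{B-\Mod}(N, \bullet)$ is fully faithful on $B-\Mod$, together with naturality in $B$, promotes this pointwise identification into the claimed commutative square of quasi-coherent sheaves on $\Aa(\_F)$.

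The only real subtlety will be confirming that the chain of equivalences built in Proposition \ref{Prop_RelativeCotangentComplexForLinearStacks} is natural with respect to maps $h : \_F \rightarrow \_G$; but since each step (the splitting of the square-zero extension, the identification of homotopy fibers with $\Map_{B-\Mod}(B, x^*\_F \otimes_B M)$, and the dualisability adjunction) is constructed functorially in the fiber, this naturality is automatic and requires no substantive new work beyond the bookkeeping already done in the proof of Lemma \ref{Lem_RelativeCotangentBaseRingChange}.
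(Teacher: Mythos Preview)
Your proposal is correct and follows exactly the approach the paper takes: the paper's own proof is the single sentence ``Every step of the proof of Proposition \ref{Prop_RelativeCotangentComplexForLinearStacks} is functorial in $\_F$,'' and what you have written is a careful unpacking of precisely that functoriality, in the same style as the proof of Lemma \ref{Lem_RelativeCotangentBaseRingChange}. There is nothing to add.
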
 

\begin{proof}
Every step of the proof of Proposition \ref{Prop_RelativeCotangentComplexForLinearStacks} is functorial in $\_F$.
\end{proof}

\begin{Prop}

\label{Prop_RelativeCotangentComplexFunctorialityForLinearStacks} 
 Let $f: X \rightarrow Y$ be a morphism of derived Artin stacks. We consider $\_F \in \textbf{QC}(X)$ and $\_G \in \textbf{QC}(Y)$ dualisable and a morphism $h : f^* \_F \rightarrow \_G$. Then there is a commutative square: 
 
 \[ \begin{tikzcd}
  \Ll_{\faktor{\Aa ( \_F )}{X}} \arrow[r] \arrow[d, "\simeq"] & \hat{f}^* \Ll_{\faktor{\Aa ( \_G )}{Y}} \arrow[d, "\simeq"] \\
  \pi_X^* \_F^\vee \arrow[r, "\pi_X^* h^\vee"] & \pi_X^* f^* \_G^\vee = \hat{f}^* \pi_Y^* \_G^\vee
 \end{tikzcd} \]
 \end{Prop}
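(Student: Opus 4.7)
The plan is to decompose the morphism $\hat{f}\colon \Aa(\_F)\to \Aa(\_G)$ into two elementary pieces — a morphism of linear stacks over a fixed base and a pure base change along $f$ — and then invoke Lemma \ref{Lem_RelativeCotangentNaturalityInFiber} and Lemma \ref{Lem_RelativeCotangentBaseRingChange} respectively to handle each piece. Concretely, the datum of $h$ produces (after the standard $f^{*}\dashv f_{*}$ adjustment) a morphism $\_F\to f^{*}\_G$ between dualisable sheaves on the single base $X$, from which the functoriality of $\Aa$ in its sheaf argument over $X$ furnishes a map $\tilde{h}\colon \Aa(\_F)\to \Aa(f^{*}\_G)$ over $X$. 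Composing with the base change identification $\Aa(f^{*}\_G)\simeq f^{*}\Aa(\_G)$ and the canonical projection $\Phi\colon f^{*}\Aa(\_G)\to \Aa(\_G)$ of Lemma \ref{Lem_RelativeCotangentBaseRingChange} recovers $\hat{f}$, yielding the factorisation
\[ \Aa(\_F)\xrightarrow{\tilde{h}}\Aa(f^{*}\_G)\simeq f^{*}\Aa(\_G)\xrightarrow{\Phi}\Aa(\_G). \]

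The square announced in the proposition is then obtained by pasting two commutative squares associated to this factorisation. Applying Lemma \ref{Lem_RelativeCotangentNaturalityInFiber} on $X$ to the morphism $h\colon \_F \to f^{*}\_G$ gives a commutative square expressing that the natural map $\tilde{h}^{*}\Ll_{\Aa(f^{*}\_G)/X}\to \Ll_{\Aa(\_F)/X}$ becomes $\pi_X^{*}h^{\vee}$ under the canonical equivalences of Proposition \ref{Prop_RelativeCotangentComplexForLinearStacks}. Applying Lemma \ref{Lem_RelativeCotangentBaseRingChange} to the base change of $\_G$ along $f$ gives a commutative square whose two horizontal arrows $\Phi^{*}\Ll_{\Aa(\_G)/Y}\simeq \Ll_{\Aa(f^{*}\_G)/X}$ and $\Phi^{*}\pi_Y^{*}\_G^{\vee}\simeq \pi_X^{*}f^{*}\_G^{\vee}$ are equivalences. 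Pulling the latter square back along $\tilde{h}$ and stacking it on top of the former, using the identification $\hat{f}^{*}\simeq \tilde{h}^{*}\circ \Phi^{*}$, produces exactly the commutative square claimed.

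The main point requiring care is the compatibility of the two vertical identifications that meet at the pasting edge $\pi_X^{*}f^{*}\_G^{\vee}$: the equivalence $\tilde{h}^{*}\Ll_{\Aa(f^{*}\_G)/X}\simeq \tilde{h}^{*}\pi_X^{*}f^{*}\_G^{\vee}$ furnished directly by Proposition \ref{Prop_RelativeCotangentComplexForLinearStacks} must agree with the one obtained by pulling Lemma \ref{Lem_RelativeCotangentBaseRingChange}'s equivalence back along $\tilde{h}$. Fortunately, inspecting the proofs of both lemmas one sees that both equivalences arise from the same universal computation on $B$-points and invoke the fully faithful embedding $B\text{-}\Mod\hookrightarrow \tx{Fun}(B\text{-}\Mod^{\leq 0},\tx{sSet})$ that underpins Proposition \ref{Prop_RelativeCotangentComplexForLinearStacks}; the compatibility is therefore built into the constructions, and no separate verification is needed beyond the naturality in $B$ which has already been established.
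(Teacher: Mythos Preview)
Your approach is exactly the paper's: its proof consists of the single sentence ``It follows from Lemma \ref{Lem_RelativeCotangentBaseRingChange} and Lemma \ref{Lem_RelativeCotangentNaturalityInFiber},'' and your factorisation $\Aa(\_F)\to\Aa(f^{*}\_G)\to\Aa(\_G)$ followed by pasting the two lemma-squares is precisely how those lemmas combine. One small quibble: the $f^{*}\dashv f_{*}$ adjunction would not turn the datum as literally written into a map $\_F\to f^{*}\_G$; the statement should simply read $h:\_F\to f^{*}\_G$ from the start (this is what makes the bottom arrow $\pi_X^{*}h^{\vee}$ and the map $\hat f$ well-typed), so no adjunction step is needed.
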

  \begin{proof}
 It follows from Lemma \ref{Lem_RelativeCotangentBaseRingChange} and Lemma \ref{Lem_RelativeCotangentNaturalityInFiber}.
 \end{proof}

 \begin{Prop}
 \label{Prop_CotangentNonDegeneracyFibrationIsNatural}
 The quasi-isomorphism $\alpha_{\pi_X} : \Tt_{\faktor{T^*[n]X}{X}} \rightarrow \pi_X^* \Ll_X[n]$ of Example \ref{Ex_CotangentLagrangianFibration} expressing the non-degeneracy of the canonical Lagrangian fibration on the shifted cotangent stacks is the canonical quasi-isomorphism from Proposition \ref{Prop_RelativeCotangentComplexForLinearStacks}.
 \end{Prop}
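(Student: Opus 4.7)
Write $\beta \colon \Tt_{\faktor{T^*[n]X}{X}} \xrightarrow{\simeq} \pi_X^*\Ll_X[n]$ for the canonical quasi-isomorphism produced by Proposition \ref{Prop_RelativeCotangentComplexForLinearStacks} applied to $\_F = \Ll_X[n]$. The plan is to show that $\beta$ fits into the diagram \eqref{Dia_LagrangianFibrationNonDegeneracy} that characterises $\alpha_{\pi_X}$. Recall that $\alpha_{\pi_X}$ is the unique map fitting in the commutative square
\[
\begin{tikzcd}
\Tt_{\faktor{T^*[n]X}{X}} \arrow[d] \arrow[r, "\alpha_{\pi_X}"] & \pi_X^* \Ll_X[n] \arrow[d] \\
\Tt_{T^*[n]X} \arrow[r, "(d\lambda_X)^{\flat}"] & \Ll_{T^*[n]X}[n]
\end{tikzcd}
\]
together with a null-homotopy of its image in the cofibre $\Ll_{\faktor{T^*[n]X}{X}}[n]$ coming from the Lagrangian fibration structure of Example \ref{Ex_CotangentLagrangianFibration}. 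Since the right-hand column is a fibre sequence, it suffices to check that the analogous square with $\beta$ in place of $\alpha_{\pi_X}$ commutes; the null-homotopy then comes for free from the fibre sequence structure.

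To check this commutativity, I would use the tautological characterisation of $\lambda_X$ (Remark \ref{RQ_LinearStackAndSection} together with Lemma \ref{Lem_UniversalPropTautological1Form}): viewed as a section of $\pi_X^*\Ll_X[n]$, $\lambda_X$ corresponds to the identity $\tx{Id}_{T^*[n]X}$. The task then reduces to showing that $(d\lambda_X)^\flat$, restricted to the vertical subcomplex $\Tt_{\faktor{T^*[n]X}{X}}$, recovers via $\beta$ the canonical inclusion $\pi_X^*\Ll_X[n] \hookrightarrow \Ll_{T^*[n]X}[n]$. In local coordinates where $\lambda_X = \sum_i p_i\, dq_i$, this is the familiar computation $(d\lambda_X)^\flat(\partial_{p_i}) = dq_i$, and $\beta$ is precisely the identification $\partial_{p_i} \leftrightarrow dq_i$ coming from Proposition \ref{Prop_RelativeCotangentComplexForLinearStacks}.

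The main obstacle will be promoting this local computation into an equality of quasi-coherent maps on $T^*[n]X$ rather than an equality up to some automorphism of $\pi_X^*\Ll_X[n]$. My strategy is to invoke functoriality: by Lemma \ref{Lem_RelativeCotangentBaseRingChange} and Proposition \ref{Prop_RelativeCotangentComplexFunctorialityForLinearStacks}, both $\alpha_{\pi_X}$ and $\beta$ are compatible with pullbacks along smooth morphisms $X' \to X$, so it suffices to verify the identity on a smooth affine atlas where $\Ll_X$ is represented by a free module of finite rank. On each such chart, $T^*[n]X$ is a derived affine bundle with explicit Koszul generators, the classical coordinate calculation above is valid on the nose in the strict model, and both $\alpha_{\pi_X}$ and $\beta$ become the identity on the generating copy of $\pi_X^*\Ll_X[n]$, yielding the claimed equality.
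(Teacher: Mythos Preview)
Your overall framing is right: both the paper and you reduce to showing that the canonical equivalence $\beta$ of Proposition~\ref{Prop_RelativeCotangentComplexForLinearStacks} fills the square that characterises $\alpha_{\pi_X}$. The difference is in how that square is checked.

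There is a genuine gap in your reduction step. You propose to pass to a smooth affine atlas $U \to X$ ``where $\Ll_X$ is represented by a free module of finite rank'' and then run the classical coordinate computation $\lambda_X = \sum_i p_i\,dq_i$. But $X$ is a derived Artin stack: even on a smooth affine chart $U$, the pullback $f^*\Ll_X$ is only a perfect complex, not a free module concentrated in a single degree. The coordinate identity $(d\lambda_X)^\flat(\partial_{p_i}) = dq_i$ presupposes exactly that $\Ll_X$ is free of finite rank, i.e.\ that $X$ is a classical smooth scheme. For general $X$ the computation would have to be carried out in a cofibrant cdga model with generators in several degrees, and you have not indicated how that goes; the claim that ``both $\alpha_{\pi_X}$ and $\beta$ become the identity on the generating copy'' is precisely the thing to be proved and does not follow from the functoriality lemmas alone. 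A related issue: pulling back $T^*[n]X$ along $f:U\to X$ gives $\Aa(f^*\Ll_X[n])$, which is \emph{not} $T^*[n]U$ unless $f$ is \'etale, so you cannot simply invoke the Lagrangian fibration of $T^*[n]U$ on the chart.

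The paper's proof sidesteps all of this by never localising. It uses the zero section to split the fibre sequence $\pi_X^*\Ll_X[n]\to\Ll_{T^*[n]X}[n]\to\Ll_{\faktor{T^*[n]X}{X}}[n]$, then applies Proposition~\ref{Prop_RelativeCotangentComplexForLinearStacks} to identify the cofibre with $\pi_X^*\Tt_X$. Under these global splittings the non-degeneracy diagram~\eqref{Dia_LagrangianFibrationNonDegeneracy} becomes the strict diagram
\[
\begin{tikzcd}
\Tt_{\faktor{T^*[n]X}{X}} \arrow[d] \arrow[r] & \pi_X^*\Ll_X[n] \arrow[d] \arrow[r] & 0 \arrow[d] \\
\pi_X^*\Ll_X[n]\oplus\pi_X^*\Tt_X \arrow[r,"\omega^\flat"] & \pi_X^*\Tt_X\oplus\pi_X^*\Ll_X[n] \arrow[r] & \pi_X^*\Tt_X
\end{tikzcd}
\]
in which the left vertical map is the inclusion (via $\beta$) and $\omega^\flat$ is the swap; reading off the induced map on fibres gives $\alpha_{\pi_X}=\beta$ directly. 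This is the same ``$(d\lambda_X)^\flat(\partial_{p})=dq$'' statement you are after, but phrased intrinsically so that it holds for any derived Artin $X$ without choosing coordinates. If you want to repair your argument, the cleanest fix is to replace the local computation by this global splitting argument.
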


\begin{proof}
First, since the cotangent bundle has a section, we have a split exact sequence: 

\[ \begin{tikzcd}
\pi_X^* \Ll_X [n] \arrow[r, shift left] & \arrow[l, dashed, shift left] \Ll_{T^*[n]X} [n] \arrow[r, shift left] & \arrow[l, dashed, shift left]  \Ll_{\faktor{T^*[n]X}{X}}[n] 
\end{tikzcd} \]

Proposition \ref{Prop_RelativeCotangentComplexForLinearStacks} gives us canonical equivalences $ \Ll_{\faktor{T^*[n]X}{X}}[n] \simeq \pi_X^* \Tt_X$. With this data, we can rewrite Diagram \eqref{Dia_LagrangianFibrationNonDegeneracy}, up to weak equivalences, as the strictly commutative diagram

\[ \begin{tikzcd}
\Tt_{\faktor{T^*[n]X}{X}} \arrow[r, "\simeq"] \arrow[d] & \pi_X^* \Ll_X[n] \arrow[d] \arrow[r] & 0 \arrow[d] \\
\pi_X^* \Ll_X[n] \oplus \pi_X^* \Tt_X \arrow[r, "\omega"] & \pi_X^* \Tt_X \oplus \pi_X^* \Ll_X[n] \arrow[r] & \pi_X^* \Tt_X
\end{tikzcd}\]

Through the canonical equivalence $\Tt_{\faktor{T^*[n]X}{X}} \rightarrow \pi_X^* \Ll_X[n]$ of Proposition \ref{Prop_RelativeCotangentComplexForLinearStacks}, the morphism $\Tt_{\faktor{T^*[n]X}{X}} \rightarrow \pi_X^* \Ll_X[n]\oplus \pi_X^* \Tt_X$ simply becomes the natural inclusion and $\omega$ becomes the identity. This implies that $ \alpha_{\pi_X} : \Tt_{\faktor{T^*[n]X}{X}} \rightarrow \pi_X^* \Ll_X[n]$ is the canonical equivalence of Proposition \ref{Prop_RelativeCotangentComplexForLinearStacks}.
\end{proof} 
 \section{Symplectic Geometry of the Derived Critical Locus}
\label{Sec_SymplecticGeometryoftheDerivedCriticalLocus} 
 
  In \ref{Sec_LagrangianIntersection} and \ref{Sec_LagragianFibIntersection} we present a few results on the symplectic geometry of homotopy pull-backs of derived Artin stacks. These results apply in particular to the case of derived intersections of derived schemes. In Section \ref{Sec_DerivedCriticalLocus} we study in more details the special case of derived intersections given by derived critical loci.

 \subsection{Lagrangian Intersections are $(n-1)$-Shifted Symplectic}
 \label{Sec_LagrangianIntersection}
 
 \begin{Prop}[\cite{PTVV}, Section 2.2]
\label{Prop_LagrangianIntersectionareShiftedSymplectic}
Let $Z$ be a derived Artin stack with a $n$-shifted symplectic structure $\omega$.  Let $f: X \rightarrow Z$ and $g : Y \rightarrow Z$ be morphisms with $\gamma$ and $\delta$ Lagrangian structures on $f$ and $g$ respectively. Then the homotopy pull-back $X \times_Z Y$ possesses a canonical $(n-1)$-shited symplectic structure called the residue of $\omega$ and denoted $R(\omega, \gamma,\delta)$. 
\end{Prop}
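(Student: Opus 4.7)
Write $W = X \times_Z Y$ for the homotopy pullback, with projections $p_X : W \to X$ and $p_Y : W \to Y$ and common composite $h := f \circ p_X \simeq g \circ p_Y$. My plan is to construct the residue closed $2$-form of degree $n-1$ on $W$ by concatenating the pulled-back isotropic structures into a loop at $0$ in $\_A^{2,cl}(W,n)$, then to check that the resulting $2$-form is non-degenerate using the Lagrangian conditions on $\gamma$ and $\delta$.

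First I would build the underlying closed form. Pulling $\gamma$ back along $p_X$ yields a path $p_X^*\gamma$ from $h^*\omega$ to $0$ in $\_A^{2,cl}(W,n)$, and similarly $p_Y^*\delta$ yields a path from $h^*\omega$ to $0$. Concatenating $p_X^*\gamma$ with the reverse of $p_Y^*\delta$ then produces a loop at $0$. Since $\_A^{2,cl}(W,n) \simeq \bigl|\prod_{i\geq 2} \textbf{DR}_{(i)}(W)[n+2]\bigr|$ is the realisation of a chain complex, there is a natural equivalence $\Omega_0 \_A^{2,cl}(W,n) \simeq \_A^{2,cl}(W,n-1)$, so this loop defines a canonical closed $2$-form $R(\omega,\gamma,\delta)$ of degree $n-1$ on $W$.

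Next I would verify non-degeneracy. The homotopy pullback square gives a fiber sequence $\Tt_W \to p_X^*\Tt_X \oplus p_Y^*\Tt_Y \to h^*\Tt_Z$ and, dually, a fiber sequence $\Ll_W[-1] \to h^*\Ll_Z \to p_X^*\Ll_X \oplus p_Y^*\Ll_Y$. Non-degeneracy of $\omega$ gives $h^*\Tt_Z \simeq h^*\Ll_Z[n]$, while the Lagrangian conditions on $\gamma$ and $\delta$, as in Remark \ref{RQ_NonDegeneracyLagrangian}, yield equivalences $\Tt_X \simeq \Ll_f[n-1]$ and $\Tt_Y \simeq \Ll_g[n-1]$. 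Pulling these back along $p_X$ and $p_Y$ and using base change for the relative cotangent complex ($p_X^*\Ll_f \simeq \Ll_{p_Y}$ and $p_Y^*\Ll_g \simeq \Ll_{p_X}$) converts them into $p_X^*\Tt_X \simeq \Ll_{p_Y}[n-1]$ and $p_Y^*\Tt_Y \simeq \Ll_{p_X}[n-1]$. Assembling these identifications into a morphism from the tangent fiber sequence of $W$ to the $[n-1]$-shifted cotangent fiber sequence of $W$ produces, by stability of $\textbf{QC}(W)$ and the equivalence on the outer two terms, an equivalence $\Tt_W \to \Ll_W[n-1]$.

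The hard part will be to identify this abstract equivalence with the sharp map $R(\omega,\gamma,\delta)^\flat$ of the residue $2$-form itself. This amounts to unwinding the loop-suspension equivalence $\Omega_0 \_A^{2,cl}(W,n) \simeq \_A^{2,cl}(W,n-1)$ explicitly and checking that, at the level of tangent complexes, concatenation of $p_X^*\gamma$ with the reverse of $p_Y^*\delta$ really records the difference between the two null-homotopies of $h^*\omega^\flat$ provided by the two Lagrangian structures. Once this compatibility is established, non-degeneracy of $R(\omega,\gamma,\delta)$ is forced by the already established non-degeneracies of $\omega$, $\gamma$, and $\delta$, which completes the construction of the $(n-1)$-shifted symplectic form.
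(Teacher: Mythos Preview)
Your proposal is correct and is essentially the original argument from \cite{PTVV}, Theorem 2.9, which is exactly what the paper invokes for this proposition: concatenate the two pulled-back isotropies into a loop at $0$ in $\_A^{2,cl}(W,n)$, interpret it as a point of $\_A^{2,cl}(W,n-1)$, and then deduce non-degeneracy by mapping the Mayer--Vietoris tangent fiber sequence of the pullback to the shifted cotangent fiber sequence via $\omega^\flat$, $\Theta_f$, $\Theta_g$. Your honest flagging of the ``hard part'' (matching the abstract equivalence on fibers with $R(\omega,\gamma,\delta)^\flat$) is appropriate; in \cite{PTVV} this is handled by keeping track of the null-homotopies $\gamma_0,\delta_0$ throughout, and the paper later relies on exactly this identification (e.g.\ in the proof of Proposition \ref{Prop_LagrangianFibrationCanonicalNonDegeneracyMap}, where $\omega_{\textbf{Crit}(f)}^\flat$ is written as $\Theta_{df}\times_{\omega}\Theta_0$).

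It is worth noting that the paper also sketches, in the remark immediately following the proposition, a genuinely different route: view $X$ and $Y$ as Lagrangian correspondences $\star \leftarrow X \to Z$ and $Z \leftarrow Y \to \star$, compose them to get a Lagrangian $X\times_Z Y \to \star$, and then invoke Lemma \ref{Lem_LagrangianOveraPoint} to conclude that a Lagrangian structure over $\star_n$ is the same as an $(n-1)$-shifted symplectic structure. This correspondence argument is more conceptual and packages both the closedness and non-degeneracy into the machinery of \cite{Cal4}, whereas your direct approach (and that of \cite{PTVV}) has the advantage of making the residue form and its flat map completely explicit, which is what the rest of the paper actually uses.
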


\begin{RQ}
	\label{RQ_LagrangianIntersectionMap}
	If we fix $f$ and $g$ as above, we can extend the previous theorem to obtain the following map of spaces (see Theorem 2.4 in \cite{Cal4}):
	\[ \textbf{Lag}(f,n) \times_{\textbf{Symp}(X,n)} \textbf{Lag}(g,n) \rightarrow \textbf{Symp}(X\times_Z Y, n-1)\]

\end{RQ}

\begin{RQ}
Theorem \ref{Prop_LagrangianIntersectionareShiftedSymplectic} can also be seen as a consequence of the procedure of composition of Lagrangian correspondences. Consider the following composition of Lagrangian correspondences: 

\[ \begin{tikzcd}
 & & X \times_Z Y \arrow[dl] \arrow[dr] & & \\
 & X \arrow[dl] \arrow[dr]& & Y \arrow[dl] \arrow[dr] & \\
\star & & Z  & & \star 
\end{tikzcd}\]

The maps $X \rightarrow Z \times \bar{\star}$ and $Y \rightarrow Y \times \bar{\star}$ are Lagrangian correspondences because $X \rightarrow Z$ and $Y \rightarrow Z$ are Lagrangian. Therefore, by composition, $X \times_Z Y \rightarrow \star \times \bar{\star} $ is also a Lagrangian correspondence, thus $X \times_Z Y \rightarrow \star$ is Lagrangian. From Lemma \ref{Lem_LagrangianOveraPoint}, since the point is $n$-shifted symplectic, then $X \times_Z Y$ is $(n-1)$-shifted symplectic.
\end{RQ}
  
 \subsection{Lagrangian Fibrations and Derived Intersections}

\label{Sec_LagragianFibIntersection}

\begin{Prop}
\label{Prop_LagFibProp1}

Suppose we have a sequence $\begin{tikzcd}
  L \arrow[r, "f"]&  Y \arrow[r,"g"]& X
\end{tikzcd}$ of Artin stacks and $\omega$ a $n$-shifted symplectic form on Y. Assume that $f $ is a Lagrangian morphism and $g$ is a Lagrangian fibration. Then there is a canonical quasi-isomorphism $\Tt_{\faktor{L}{X}} \rightarrow \Ll_{\faktor{L}{X}}[n-1]$. \\
\end{Prop}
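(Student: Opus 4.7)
The plan is to construct the desired quasi-isomorphism as a canonical lift produced by the isotropic structure on $f$, and then to verify it is a quasi-isomorphism by comparing two natural fiber sequences built out of the Lagrangian and Lagrangian-fibration data.

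First I would unpack the two structures in terms of the symplectic form $\omega^\flat : \Tt_Y \to \Ll_Y[n]$. The Lagrangian fibration on $g$ provides a quasi-isomorphism $\alpha_g : \Tt_{Y/X} \to g^*\Ll_X[n]$ which, by Definition \ref{def: lagrangian fibration}, is characterised by the fact that $\omega^\flat \circ (\Tt_{Y/X} \to \Tt_Y)$ factors as $\alpha_g$ followed by the canonical inclusion $g^*\Ll_X[n] \to \Ll_Y[n]$. The Lagrangian structure on $f$ provides the non-degeneracy quasi-isomorphism $\Theta_f : \Tt_L \to \Ll_{L/Y}[n-1]$, together with the null-homotopy of $\Tt_L \to f^*\Tt_Y \xrightarrow{f^*\omega^\flat} f^*\Ll_Y[n] \to \Ll_L[n]$ coming from the underlying isotropic homotopy $\gamma_0$.

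Next I would build the candidate map $\Phi : \Tt_{L/X} \to \Ll_{L/X}[n-1]$ as a lift into a homotopy fiber. Consider the composition
\[ \Tt_{L/X} \to f^*\Tt_{Y/X} \xrightarrow{f^*\alpha_g} (gf)^*\Ll_X[n] \to \Ll_L[n], \]
whose last arrow is the $[n]$-shift of the cotangent map $d(gf)$, factoring as $(gf)^*\Ll_X[n] \to f^*\Ll_Y[n] \to \Ll_L[n]$. Using the characterisation of $\alpha_g$ above and the commutativity of the $3\times 3$ diagram of tangent complexes for $L \to Y \to X$, this composition agrees with
\[ \Tt_{L/X} \to \Tt_L \to f^*\Tt_Y \xrightarrow{f^*\omega^\flat} f^*\Ll_Y[n] \to \Ll_L[n], \]
which is canonically null-homotopic via the pulled-back isotropic structure on $f$. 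Rotating the cotangent cofiber sequence $(gf)^*\Ll_X \to \Ll_L \to \Ll_{L/X}$ identifies $\Ll_{L/X}[n-1]$ as the homotopy fiber of $(gf)^*\Ll_X[n] \to \Ll_L[n]$, so this null-homotopy yields the sought lift $\Phi$.

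To prove that $\Phi$ is a quasi-isomorphism, I would embed it in the morphism of fiber sequences
\[ \begin{tikzcd} \Tt_{L/X} \arrow[r] \arrow[d,"\Phi"'] & f^*\Tt_{Y/X} \arrow[r] \arrow[d,"f^*\alpha_g"',"\simeq"] & \Ll_L[n] \arrow[d, equal] \\ \Ll_{L/X}[n-1] \arrow[r] & (gf)^*\Ll_X[n] \arrow[r] & \Ll_L[n] \end{tikzcd} \]
whose top row comes from the standard fiber sequence $\Tt_{L/Y} \to \Tt_{L/X} \to f^*\Tt_{Y/X}$ after identifying its cofiber $\Tt_{L/Y}[1]$ with $\Ll_L[n]$ through the Lagrangian structure on $f$. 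Since the two right-hand vertical maps are quasi-isomorphisms, the five-lemma on the associated long exact sequences forces $\Phi$ to be one as well. The hard part will be the commutativity of the right-hand square: this reduces to the fact that both $\Theta_f$ and $\alpha_g$ are defined through the single morphism $\omega^\flat$, so that $f^*\alpha_g$ followed by the $[n]$-shift of $d(gf)$ equals $f^*\omega^\flat$ composed with the relevant natural maps, while the identification $\Tt_{L/Y}[1] \simeq \Ll_L[n]$ is produced by the connecting map of the same $\omega^\flat$-null-homotopy. Checking these two descriptions match amounts to a chase through the $3\times 3$ diagram of tangent/cotangent complexes for $L \to Y \to X$.
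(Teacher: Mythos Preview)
Your approach is correct and is essentially the same argument as the paper's, only repackaged. The paper encodes the same three quasi-isomorphisms ($\Tt_L \simeq \Ll_{L/Y}[n-1]$, $f^*\Tt_Y \simeq f^*\Ll_Y[n]$, $f^*\Tt_{Y/X} \simeq (gf)^*\Ll_X[n]$) as vertical edges of a commutative cube whose upper and lower faces consist of bi-Cartesian squares, and then reads off the desired equivalence by taking fibers; you instead rotate the tangent fiber sequence $\Tt_{L/Y} \to \Tt_{L/X} \to f^*\Tt_{Y/X}$, identify its last term via the Lagrangian non-degeneracy, and compare with the rotated cotangent sequence via the five lemma. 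The ``hard'' right-hand square in your diagram is exactly the commutativity the paper obtains for free from the cube: both compositions $f^*\Tt_{Y/X} \to \Ll_L[n]$ factor through $f^*\Tt_{Y/X} \to f^*\Tt_Y \xrightarrow{f^*\omega^\flat} f^*\Ll_Y[n] \to \Ll_L[n]$, one by the defining property of $\alpha_g$ and the other by the description of the connecting map of $\Tt_{L/Y} \to \Tt_L \to f^*\Tt_Y$. Your presentation has the mild advantage that it makes the construction of the map $\Phi$ explicit, whereas the paper's cube argument produces it somewhat implicitly as a dashed arrow; the paper's cube, on the other hand, avoids the separate verification of the right square by building all compatibilities into one diagram from the start.
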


\begin{proof}

Consider the following commutative diagram: \\

\adjustbox{scale=0.8,center}{
\begin{tikzcd}[row sep=0.8cm, column sep=0.5cm]
\Tt_L \arrow[dd, "\simeq", near end] \arrow[rr] & & f^* \Tt_Y \arrow[dd, "\simeq", near end] \arrow[rr] & & (g\circ f )^* \Tt_X \arrow[dd, dashed, "\simeq", near end] & \\
 & \Tt_{\faktor{L}{X}} \arrow[dd, dashed,crossing over,  "\simeq", near start] \arrow[lu] \arrow[rr, crossing over]& & f^* \Tt_{\faktor{Y}{X}} \arrow[dd,crossing over,  "\simeq", near start] \arrow[lu] \arrow[rr, crossing over] & & 0 \arrow[dd, crossing over, "\simeq"] \arrow[ul] \\
 \Ll_{\faktor{L}{Y}}[n-1] \arrow[rr] & & f^* \Ll_Y [n] \arrow[rr] & & f^*\Ll_{\faktor{Y}{X}}[n] & \\
 &\Ll_{\faktor{L}{X}}[n-1] \arrow[ul] \arrow[rr] & &  (g \circ f)^* \Ll_X [n] \arrow[ul] \arrow[rr] & & 0 \arrow[ul]
\end{tikzcd}
}\\

In the upper face, every squares are bi-Cartesian because both the outer square and the right most square are bi-Cartesian. Every non-dashed vertical arrows are quasi-isomorphisms by assumption (because of the various non-degeneracy conditions). Focusing on the right hand cube, it sends the upper homotopy bi-Cartesian square to the bottom square which is also homotopy bi-Cartesian. The homotopy cofiber of $(g \circ f)^* \Ll_X [n] \rightarrow f^* \Ll_Y[n]$ is $f^* \Ll_{\faktor{Y}{X}} [n]$ and we obtain a quasi-isomorphism $(g \circ f)^* \Tt_X \rightarrow f^* \Ll_{\faktor{Y}{X}}[n]$ depicted as a dashed arrow. 

By the same reasoning, since the upper outer square is homotopy bi-Cartesian, it maps to the lower outer square who is also homotopy bi-Cartesian. Moreover, the homotopy fiber of the map $\Ll_{\faktor{L}{Y}} [n-1] \rightarrow f^* \Ll_{\faktor{Y}{X}}[n]$ is exactly $\Ll_{\faktor{L}{X}}[n-1]$. This proves that there is a canonical quasi-isomorphism $\Tt_{\faktor{L}{X}} \rightarrow \Ll_{\faktor{L}{X}} [n-1]$. 
\end{proof}

\begin{Th}
\label{Th_DerivedIntersectionLagrangianFibration}
Let $Y$ be a $n$-shifted symplectic derived Artin stack. Let $f_i: L_i \rightarrow Y$ be Lagrangian morphisms (for $i= 1 \cdots 2$) and $\pi : Y \rightarrow X$ a Lagrangian fibration. Suppose that the maps $\pi \circ f_i : L_i \rightarrow X$ are weak equivalences. Then $P : Z = L_1 \times_Y L_2 \rightarrow X$ is a Lagrangian fibration.
\end{Th}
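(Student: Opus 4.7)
The plan is to equip $P : Z = L_1 \times_Y L_2 \to X$ with the Lagrangian fibration structure associated to the $(n-1)$-shifted symplectic form $\omega_Z := R(\omega, \gamma_1, \gamma_2)$ on $Z$ produced by Proposition \ref{Prop_LagrangianIntersectionareShiftedSymplectic}, where $\gamma_i$ denotes the Lagrangian structure on $f_i$. This requires constructing a null-homotopy of $(\omega_Z)_{/X}$ in $\_A^{2,cl}(Z/X, n-1)$ and a quasi-isomorphism $\Tt_{Z/X} \simeq P^* \Ll_X[n-1]$, which I build independently and then reconcile.

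For the isotropic fibration structure, the crucial observation is that the hypothesis $\pi \circ f_i$ is a weak equivalence forces $(\pi f_i)^* : \_A^{2,cl}(X,m) \to \_A^{2,cl}(L_i,m)$ to be an equivalence, so that the cofiber $\_A^{2,cl}(L_i/X, m)$ is contractible for every shift $m$. Denoting by $\eta$ the Lagrangian fibration structure on $\pi$, the Lagrangian structure $\gamma_i$ provides a null-homotopy $(\gamma_i)_{/X}$ of $(f_i^*\omega)_{/X}$ in $\_A^{2,cl}(L_i/X, n)$, while the natural map $f_i^*: \_A^{2,cl}(Y/X,n) \to \_A^{2,cl}(L_i/X,n)$ (induced by functoriality of cofibers from $f_i^*\circ\pi^*=(\pi f_i)^*$) yields a second null-homotopy $f_i^*\eta$ of the same element. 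Their difference is a loop at zero in $\_A^{2,cl}(L_i/X, n)$ and hence represents an element of $\_A^{2,cl}(L_i/X, n-1) \simeq \star$, so there is a canonical homotopy $H_i$ from $(\gamma_i)_{/X}$ to $f_i^*\eta$. Setting $q := f_1\circ p_1 \simeq f_2\circ p_2 : Z \to Y$, the pullbacks $p_1^*H_1$ and $p_2^*H_2$ connect $(p_i^*\gamma_i)_{/X}$ to the common null-homotopy $q^*\eta$ of $(q^*\omega)_{/X}$; concatenating $p_1^*H_1$ with the reverse of $p_2^*H_2$ yields a path between the two null-homotopies whose difference is, by construction, the image of $\omega_Z$ in $\_A^{2,cl}(Z/X, n-1)$, i.e. the desired null-homotopy of $(\omega_Z)_{/X}$.

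For the non-degeneracy condition, I assemble the required quasi-isomorphism in three steps. First, base change in the Cartesian square defining $Z$ gives $\Ll_{Z/L_1} \simeq p_2^*\Ll_{L_2/Y}$, and the transitivity sequence $p_1^*\Ll_{L_1/X} \to \Ll_{Z/X} \to \Ll_{Z/L_1}$ combined with $\Ll_{L_1/X} \simeq 0$ (from $\pi\circ f_1$ being a weak equivalence) yields $\Tt_{Z/X} \simeq p_2^*\Tt_{L_2/Y}$. Second, the transitivity fiber sequence $\Tt_{L_2/Y} \to \Tt_{L_2/X} \to f_2^*\Tt_{Y/X}$ together with $\Tt_{L_2/X} \simeq 0$ (from $\pi\circ f_2$ being a weak equivalence) gives $\Tt_{L_2/Y} \simeq f_2^*\Tt_{Y/X}[-1]$. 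Third, the Lagrangian fibration non-degeneracy $\alpha_\pi : \Tt_{Y/X} \simeq \pi^*\Ll_X[n]$ promotes this to $\Tt_{L_2/Y} \simeq (\pi\circ f_2)^*\Ll_X[n-1]$, and pulling back via $p_2$ produces $\Tt_{Z/X} \simeq P^*\Ll_X[n-1]$.

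The main obstacle will be verifying that the tangent-level quasi-isomorphism obtained above coincides, up to canonical homotopy, with the comparison map $\alpha_P$ that diagram \eqref{Dia_LagrangianFibrationNonDegeneracy} extracts from the specific isotropic fibration structure built in the second paragraph. This compatibility demands a diagram chase in the spirit of the proof of Proposition \ref{Prop_LagFibProp1}, interleaving the fiber-sequence manipulations producing the tangent equivalences with the three null-homotopies $(\gamma_1)_{/X}$, $(\gamma_2)_{/X}$, $\eta$ that assemble $(\omega_Z)_{/X}$ to zero. I expect the contractibility of the relative form spaces $\_A^{\bullet, cl}(L_i/X, \bullet)$ to absorb all indeterminacy and render the two constructions canonically equivalent.
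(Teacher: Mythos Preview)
Your approach is essentially the paper's, with two small differences worth flagging.

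For the tangent-level equivalence you proceed asymmetrically through $\Tt_{Z/X}\simeq p_2^*\Tt_{L_2/Y}\simeq F^*\Tt_{Y/X}[-1]$, whereas the paper uses the symmetric fiber sequence $\Tt_{Z/X}\to p_1^*\Tt_{L_1/X}\oplus p_2^*\Tt_{L_2/X}\to F^*\Tt_{Y/X}$ with vanishing middle term. Both yield the same equivalence, but the symmetric presentation matches the symmetric construction of $\omega_Z^\flat$ from $(\gamma_1,\gamma_2)$ and so makes the compatibility step with $\alpha_P$ more transparent.

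On that compatibility step, your expectation that contractibility of $\_A^{2,cl}(L_i/X,\bullet)$ will ``absorb all indeterminacy'' is not quite the operative mechanism: the comparison of $\alpha$ with $\alpha_P$ takes place in $\textbf{QC}(Z)$, not in a path space of closed forms. The paper carries this out by assembling a large cube (their Diagram~\eqref{Dia_LagrangianFibrationDerivedIntersection}) recording all the non-degeneracy equivalences at once, and then observing that the composite $P^*\Ll_X[n-1]\to\Ll_Z[n-1]\to p_1^*\Ll_{L_1/Y}[n-1]\oplus p_2^*\Ll_{L_2/Y}[n-1]$ factors through $p_1^*\Ll_{L_1/X}[n-1]\oplus p_2^*\Ll_{L_2/X}[n-1]\simeq 0$; this forces the relevant map out of $\Ll_Z[n-1]$ to factor through $\Ll_{Z/X}[n-1]$, so that $\alpha$ satisfies the same universal property that defines $\alpha_P$. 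The key input is therefore the \emph{cotangent-level} vanishing $\Ll_{L_i/X}\simeq 0$ rather than form-space contractibility --- the same hypothesis, but deployed differently than you anticipate.
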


\begin{proof}

We summarize the notation in the following diagram:

$$\begin{tikzcd}
Z \arrow[r, "p_1"] \arrow[dr, "F"] \arrow[d, "p_2"] & L_1 \arrow[d, "f_1"]  & \\
L_2 \arrow[r,"f_2"] & Y \arrow[dr, " \pi"] & \\
& & X 
\end{tikzcd}
$$

We also denote $P := \pi \circ F : Z \rightarrow X $. \\

To show that we can obtain an isotropic structure, we will show that we have a map of spaces (dropping at first the non-degeneracy condition of the Lagrangian fibration): 
\[ \textbf{Lag}(f_1,n) \times_{\textbf{Symp}(Y,n)} \textbf{Lag}(f_2,n) \times_{\textbf{Symp}(Y,n) } \textbf{IsoFib}(\pi, n) \rightarrow \textbf{IsoFib}(P,n-1)  \]

If we forget the non-degeneracy of the Lagrangian structure, we obtain an element in $ \textbf{Iso}(f_1,n) \times_{\textbf{Symp}(Y,n)} \textbf{Iso}(f_2,n) \times_{\textbf{Symp}(Y,n) } \textbf{IsoFib}(\pi, n)$ and we can show, with formal manipulations of the pullbacks defining the spaces of isotropic structures and isotropic fibrations, that:
\[  \textbf{Iso}(f_1,n) \times_{\textbf{Symp}(Y,n)} \textbf{Iso}(f_2,n) \times_{\textbf{Symp}(Y,n) } \textbf{IsoFib}(\pi, n)\] 
\[ = \star \times_{\_A^{2,cl}(L_1,n)} \textbf{Symp}(Y,n) \times_{\_A^{2,cl} (L_2,n)} \star  \times_{\_A^{2,cl}(Y/X,n)} \star \]

Using the pullback to $\_A^{2,cl}(L_1 \times_Y L_2, n)$ we obtain a morphism: 
\[  \textbf{Iso}(f_1,n) \times_{\textbf{Symp}(Y,n)} \textbf{Iso}(f_2,n) \times_{\textbf{Symp}(Y,n) } \textbf{IsoFib}(\pi, n) \rightarrow\] 
\[  \star \times_{\_A^{2,cl}(L_1 \times_Y L_2,n)} \_A^{2,cl}(L_1 \times_Y L_2,n) \times_{\_A^{2,cl} (L_1 \times_Y L_2,n)} \star \times_{\_A^{2,cl}(Y/X,n)} \star\]

This last space naturally maps to: 
\begin{itemize}
	\item  \[ \_A^{2,cl}(L_1 \times_Y L_2, n-1) = \star \times_{\_A^{2,cl}(L_1 \times_Y L_2,n)} \_A^{2,cl}(L_1 \times_Y L_2,n) \times_{\_A^{2,cl} (L_1 \times_Y L_2,n)} \star \] Moreover, if we restrict this map to non-degenerate isotropic structures, then it is valued in $\textbf{Symp}(L_1 \times_Y L_2, n-1)$ (thanks to Theorem \ref{Prop_LagrangianIntersectionareShiftedSymplectic}).
	\item \[  \_A^{2,cl}(Y/X,n-1) = \star \times_{\_A^{2,cl}(Y/X,n)} \star \]
\end{itemize}

We have the commutative diagram:
\[ \begin{tikzcd}
	\_A^{2,cl}(L_1 \times_Y L_2, n-1) \times \_A^{2,cl}(Y/X,n-1) \arrow[r] \arrow[d] & \_A^{2,cl}(Y/X,n-1) \arrow[d, "P^*"] \\
	 \_A^{2,cl}(L_1 \times_Y L_2, n-1) \arrow[r] &  \_A^{2,cl}(L_1 \times_Y L_2/X, n-1)
\end{tikzcd}\]

Since the map $\_A^{2,cl}(Y/X,n-1)  \rightarrow \_A^{2,cl}(L_1 \times_Y L_2/X, n-1)$ factors through $\_A^{2,cl} (L_i/X, n-1) \simeq \star$, we get a morphism: 
\[ 	\_A^{2,cl}(L_1 \times_Y L_2, n-1) \times \_A^{2,cl}(Y/X,n-1) \rightarrow \_A^{2,cl}(L_1 \times_Y L_2, n-1) \times_{\_A^{2,cl}(L_1 \times_Y L_2/X,n-1)} \star \]

Now if we restrict to $\textbf{Symp}(L_1 \times_Y L_2, n-1) \subset \_A^{2,cl}(L_1 \times_Y L_2, n-1)$ (which amounts to restricting to non-degenerate isotropic structures), we get a map: 
\[ \textbf{Symp}(L_1 \times_Y L_2, n-1) \times \_A^{2,cl}(Y/X,n-1) \rightarrow \textbf{IsoFib}(P, n-1) \]

Therefore we get the desired map and we will consider the isotropic fibration on $P$ given by the image along the morphism we just described of the Lagrangian structures and Lagrangian fibration structure given on $f_1$, $f_2$ and $\pi$ respectively. We are left to prove the non-degeneracy condition. To do that, we first consider the following diagram: 
 
\[ \begin{tikzcd}
	\Tt_{\faktor{Z}{X}} \arrow[d] \arrow[r] &   p_1^* \Tt_{\faktor{L_1}{X}} \oplus p_2^* \Tt_{\faktor{L_2}{X}} \arrow[d] \arrow[r] & F^*\Tt_{\faktor{Y}{X}} \arrow[d] \\
	\Tt_{Z} \arrow[d] \arrow[r] &   p_1^* \Tt_{L_1} \oplus p_2^* \Tt_{L_2} \arrow[d] \arrow[r] & F^*\Tt_{Y} \arrow[d] \\
	 P^*\Tt_X \arrow[r] &  P^*\Tt_{X} \oplus  P^*\Tt_{X}  \arrow[r] & P^*\Tt_{X} 
\end{tikzcd} \]

The vertical sequences and the last two horizontal sequences are fibered and therefore so is the first horizontal sequence. The last two horizontal sequences are fibered because the following diagrams are Cartesian: 
\[ \begin{tikzcd}
	\Tt_Z \arrow[r] \arrow[d] &  p_1^* \Tt_{L_1} \arrow[d]  \\
	p_2^* \Tt_{L_2} \arrow[r] &  F^*\Tt_{Y}
\end{tikzcd} \qquad
\begin{tikzcd}
	\Tt_X \arrow[r] \arrow[d] & \Tt_X \arrow[d]  \\
\Tt_X \arrow[r] &  \Tt_X
\end{tikzcd}
\] 

 Using Proposition \ref{Prop_LagFibProp1} and non-degeneracy, we get the following commutative diagram: 

\[\begin{tikzcd}
	\Tt_{\faktor{Z}{X}} \arrow[r] \arrow[d, dashed] & p_1^* \Tt_{\faktor{L_1}{X}} \oplus p_2^* \Tt_{\faktor{L_2}{X}} \arrow[d] \arrow[r] & F^*\Tt_{\faktor{Y}{X}} \arrow[d] \\
	P^*\Ll_X[n-1] \arrow[r] & \left(p_1^* \Tt_{\faktor{L_1}{X}} \oplus p_2^* \Tt_{\faktor{L_2}{X}}\right)[n-1] \arrow[r] & P^* \Ll_X[n]
\end{tikzcd}\]  

where all vertical morphisms are quasi-isomorphisms. The fiber in the lower sequence is exactly $\Ll_X[n-1] $ because $p_1^*\Tt_{\faktor{L_1}{X}} \oplus p_2^* \Tt_{\faktor{L_2}{X}} \simeq 0$ since $L_i \to X$ are equivalences. We will call $\alpha : 	\Tt_{\faktor{Z}{X}} \to 	P^*\Ll_X[n-1] $ the dashed equivalence obtained. 

  We still need to show that $\alpha$ is the morphism used in the criteria for the non-degeneracy of the Lagrangian fibration. Recall that this morphism is given by means of the universal map filling Diagram \eqref{Dia_LagrangianFibrationNonDegeneracy}:
 
 \begin{equation*}
\begin{tikzcd}
\Tt_{\faktor{Z}{X}} \arrow[d] \arrow[r, "\alpha_P"] & P^* \Ll_X[n-1] \arrow[d] \arrow[r] & 0 \arrow[d] \\
\Tt_Z \arrow[r, "\sim"] & \Ll_Z[n-1] \arrow[r] & \Ll_{\faktor{Z}{X}}[n-1] 
\end{tikzcd}
\end{equation*}

To compare $\alpha$ and $\alpha_P$, we summarize the construction of $\alpha$ and all the equivalences coming from non-degeneracy conditions in the following diagram: 

\begin{equation}
	\label{Dia_LagrangianFibrationDerivedIntersection}
	\adjustbox{scale=0.75,center}{
		\begin{tikzcd}[row sep=2cm, column sep=tiny]
			\Tt_Z \arrow[rr] \arrow[dd, "\simeq", near start] & & p_1^*\Tt_{L_1} \oplus p_2^* \Tt_{L_2} \arrow[rr] \arrow[dd, "\simeq", near start] & & F^*\Tt_Y  \arrow[dd, "\simeq", near start] &  \\
			&\Tt_{\faktor{Z}{X}} \arrow[rr, crossing over] \arrow[ul] \arrow[dd, dashed, "\simeq", crossing over,near start] & & p_1^* \Tt_{\faktor{L_1}{X}} \oplus p_2^* \Tt_{\faktor{L_2}{X}} \arrow[rr, crossing over] \arrow[ul] \arrow[dd, "\simeq", crossing over, near start]& & F^* \Tt_{\faktor{Y}{X}} \arrow[ul] \arrow[dd, "\simeq", crossing over, near start]\\
			\Ll_Z [n-1]  \arrow[rr] & & \left( p_1^* \Ll_{\faktor{L_1}{Y}} \oplus p_2^* \Ll_{\faktor{L_2}{Y}}\right) [n-1] \arrow[rr]& & F^* \Ll_Y[n]  & \\
			& P^*\Ll_X[n-1] \arrow[ul] \arrow[rr, crossing over]& &\left(  p_1^* \Ll_{\faktor{L_1}{X}} \oplus p_2^* \Ll_{\faktor{L_2}{X}}\right)[n-1] \arrow[ul] \arrow[rr, crossing over] & & P^*\Ll_X[n] \arrow[ul] 
		\end{tikzcd} 
	}\\
\end{equation}

where all the vertical maps are quasi-isomorphism obtained from the non-degeneracy conditions. We want to prove that $\alpha_P$ and  $\alpha$ are homotopic. The relevant data extracted from the Diagram \eqref{Dia_LagrangianFibrationDerivedIntersection} is:

 \begin{equation*}
\begin{tikzcd}
\Tt_{\faktor{Z}{X}} \arrow[d] \arrow[r, "\alpha"] & P^* \Ll_X[n-1] \arrow[d] \arrow[r] & 0 \arrow[d] \\
\Tt_Z \arrow[r, "\sim"] & \Ll_Z [n-1] \arrow[r] & p_1^* \Ll_{\faktor{L_1}{Y}}[n-1] \oplus  p_2^* \Ll_{\faktor{L_2}{Y}}[n-1]  
\end{tikzcd}
\end{equation*}

 The composition: 
 \[P^* \Ll_X[n-1] \rightarrow \Ll_Z[n-1] \rightarrow p_1^* \Ll_{\faktor{L_1}{Y}}[n-1] \oplus  p_2^* \Ll_{\faktor{L_2}{Y}}[n-1] \] factorizes through $0 \simeq p_1^* \Ll_{\faktor{L_1}{X}}[n-1] \oplus  p_2^* \Ll_{\faktor{L_2}{X}}[n-1]$. This implies that the map $ \Ll_Z[n-1] \rightarrow p_1^* \Ll_{\faktor{L_1}{Y}}[n-1] \oplus  p_2^* \Ll_{\faktor{L_2}{Y}}[n-1] $ factorizes through $\Ll_{\faktor{Z}{X}}[n-1]$ and therefore $\alpha$ satisfies the same universal property as $\alpha_P$, proving that $\alpha$ and $\alpha_P$ are homotopic.   
\end{proof} 

\begin{RQ}
	Similarly to Proposition \ref{Prop_LagrangianIntersectionareShiftedSymplectic}, this theorem can be extended to a map of spaces:
	\[ \textbf{Lag}(f_1,n) \times_{\textbf{Symp}(Y,n)} \textbf{Lag}(f_2,n) \times_{\textbf{Symp}(Y,n)} \textbf{LagFib}(\pi,n) \rightarrow \textbf{LagFib}(P,n) \] 
	
		This is the simply the restriction the map described in the proof of Theorem \ref{Th_DerivedIntersectionLagrangianFibration} to the non-degenerate elements. Forgetting the extra Lagrangian fibration recovers the map in Remark \ref{RQ_LagrangianIntersectionMap}, that is the following diagram is commutative: 
	
	\[  \begin{tikzcd}
	\textbf{Lag}(f_1,n) \times_{\textbf{Symp}(Y,n)} \textbf{Lag}(f_2,n) \times_{\textbf{Symp}(Y,n)} \textbf{LagFib}(\pi,n)  \arrow[r] \arrow[d] & \textbf{LagFib}(P,n)  \arrow[d] \\
	\textbf{Lag}(f_1,n) \times_{\textbf{Symp}(Y,n)} \textbf{Lag}(f_2,n) \arrow[r] & \textbf{Symp}(L_1 \times_Y L_2, n-1)
	\end{tikzcd}  \]

\end{RQ}

 \subsection{Derived Critical Locus}
\label{Sec_DerivedCriticalLocus}

 Given a derived Artin stack $X$ and a morphism $f : X \rightarrow \Aa_k^1$, we define the derived critical locus of $f$, denoted $\textbf{Crit}(f)$, as the derived intersection of $df : X \rightarrow T^*X$ with the zero section $0 : X \rightarrow  T^*X$. It is given by the homotopy pull-back:
 
 \begin{equation}
 \label{Dia_DerivedCriticalLocus}
 \begin{tikzcd}
 \textbf{Crit}(f) \arrow[r] \arrow[d] & X \arrow[d, "df"] \\
 X \arrow[r,"0"] & T^*X
 \end{tikzcd}
 \end{equation}.
 
\begin{Ex}
\label{Ex_DerivedCriticalLocusSmoothAlgVariety}
We recall from \cite{Cal3} that if $X$ is a smooth algebraic variety, its derived critical locus can be described, as a derived scheme, by the underlying scheme given by the ordinary critical locus of $f$, that we denote $S$, together with the sheaf of $\textbf{cdga}_{\leq 0}$ given by the derived tensor product $\_O_X \otimes_{\_O_{T^*X}}^\Ll \_O_X$, restricted to $S$. This derived tensor product is described by the homotopy push-out:

$$ \begin{tikzcd}
\Sym_{\_O_X}\left(  \Tt_X \right) \arrow[r, "0"] \arrow[d, "df"] & \_O_X \arrow[d] \\
\_O_X \arrow[r] & \_O_X \otimes_{\_O_{T^*X}}^\Ll \_O_X
\end{tikzcd}
$$

 Taking the derived tensor product amounts to replacing the 0-section morphism $0 : \Sym_{\_O_X} \Tt_X  \rightarrow \_O_{X} $ by the equivalent cofibration, in the model category of commutative differential graded k-algebras, $\Sym_{\_O_X}\Tt_X \hookrightarrow \Sym_{\_O_X} \left( \Tt_X[1] \oplus  \Tt_X\right) $, where $\Sym_{\_O_X} \left( \Tt_X[1] \oplus  \Tt_X \right)$ has the differential induced by $\tx{Id}: \Tt_X[1] \rightarrow \Tt_X$. Then we take the strict push-out of this replacement. The use of these resolutions is well explained in \cite{Cal3} or \cite{Ve1}. We obtain:

 $$\_O_{\textbf{Crit}(f)} := \left( \_O_X \otimes_{\_O_{T^*X}}^\Ll \_O_X \right)_{\vert S} \simeq \left( \Sym_{\_O_X} \Tt_X[1], \iota_{df} \right)_{\vert S}$$  
 
 where $\iota_{df}$ is the the differential on $\_O_{\textbf{Crit}(f)}$ given by the contraction along $df$. The restriction to $S$ denotes the fact that this is a derived scheme whose underlying scheme is the strict critical locus. Observe that outside of the critical locus, $\left( \Sym_{\_O_X} \Tt_X[1], \iota_{df} \right)$ is cohomologically equivalent to 0.                             
\end{Ex} 

\begin{RQ}\label{RQ_DerivedCriticalLocusSmoothCase}
If we do not assume that $X$ is smooth in Example \ref{Ex_DerivedCriticalLocusSmoothAlgVariety}, then $\Ll_X$ usually has a non trivial internal differential. As a sheaf of graded algebra, we still obtain $ \Sym_{\_O_X} \left( \Tt_X[1] \right)$ since the replacement is the same as a graded algebra but the differential is a priori be different and involves a combination of the internal differential on $\Tt_X$ and the contraction $\iota_{df}$. 
\end{RQ}

\begin{RQ}
\label{RQ_DerivedCriticalLocus}
From Example \ref{Ex_CotangentComplexisSymplectic}, we know that $T^*X$ carries a canonical symplectic form of degree $0$ and from Example \ref{Ex_LagrangianSectionof1form} we know that both the $0$ section and $df$ have a natural Lagrangian structure. From Proposition \ref{Prop_LagrangianIntersectionareShiftedSymplectic}, the derived intersection of these Lagrangian structures, namely the derived critical locus \textbf{Crit}(f), has a natural $(-1)$-shifted symplectic structure. \\
\end{RQ}

\begin{RQ}
\label{RQ_SelfIntersection}
When $X$ is a derived Artin stack and $df=0$, we have that $\textbf{Crit}(f) \simeq T^*[-1]X$ and $\omega_{\textbf{Crit}(f)}$ is the canonical $(-1)$-shifted symplectic structure on $T^*[-1]X$. 

In this situation, the strict critical locus is $X$ itself, and the restriction to $X$ of $\Sym_{\_O_X} \Tt_X[1]$ is therefore $\Sym_{\_O_X} \Tt_X[1]$ itself (with the differential being induced by the differential on $\Tt_X$). Thus $\textbf{Crit}(f) \simeq \textbf{Spec}_X \left( \Sym_{\_O_X} \Tt_X[1] \right) = T^*[-1]X $. 
\end{RQ}

\begin{RQ}
We want to understand in general the $(-1)$-shifted symplectic form on $\textbf{Crit}(f)$. We use the universal property of the tautological $1$-form (Lemma \ref{Lem_UniversalPropTautological1Form}) to see that $(df)^* \omega = 0$ (with $\omega = d\lambda_X$ the canonical symplectic structure on $T^*X$). Using the resolution of the zero section, as in Example \ref{Ex_DerivedCriticalLocusSmoothAlgVariety}, $\omega$ induces a closed 2-form on $\textbf{Spec}_X \left( \Sym_{\_O_X} \left( \Tt_X[1] \oplus  \Tt_X\right)\right)$. Since the differential on the resolution, $\Sym_{\_O_X} \left( \Tt_X[1] \oplus  \Tt_X\right)$, is induced by $\tx{Id} : \Tt_X \rightarrow \Tt_X[1]$, the tautological 1-form $\omega_{-1}$ on $T^*[-1]X$ induces a closed 2-form on $\textbf{Spec}_X \left( \Sym_{\_O_X} \left( \Tt_X[1] \oplus  \Tt_X\right)\right)$ which is a homotopy between $\omega$ and $0$. We then have that the $(-1)$-shifted symplectic form is described by the self-homotopy of $0$ given by $\omega_{-1}$: 

\[\begin{tikzcd}
0 \arrow[r, "\omega_{-1}"] & p^*\omega = 0
\end{tikzcd}\]

The proof of Proposition \ref{Prop_LagrangianIntersectionareShiftedSymplectic} (see Theorem 2.9 in \cite{PTVV}) tells us that  $\omega_{-1}$ is the (-1)-shifted symplectic form on $\textbf{Crit}(f)$.
\end{RQ}

\begin{RQ}
\label{RQ_CriticalLocusLagragianFibration}
From Theorem \ref{Th_DerivedIntersectionLagrangianFibration}, we have that $\pi : \textbf{Crit}(f) \rightarrow X$ is a Lagrangian fibration. In the situation where $df =0$ and $X$ is smooth, this Lagrangian fibration coincides with the canonical Lagrangian fibration on $\pi_X : T^*[-1]X \rightarrow X$. In general, the morphism $\alpha_{\pi}$ controlling the non-degeneracy condition of the Lagrangian fibration (see Diagram \eqref{Dia_LagrangianFibrationNonDegeneracy}) is still natural in the sense given by the following proposition.
\end{RQ}

\begin{Prop}

\label{Prop_LagrangianFibrationCanonicalNonDegeneracyMap}
$\alpha_{\pi}$ is equivalent to the following composition of equivalences: 

\begin{equation}
	\label{Eq_NonDegeneracyLagFibDerivedCriticalLocus}
	 \begin{tikzcd}
		\Tt_{\faktor{\textbf{Crit}(f)}{X}} \arrow[r] & \Tt_{\faktor{X}{X}} \times_{\Tt_{\faktor{T^*X}{X}}} \Tt_{\faktor{X}{X}} \arrow[r] \simeq 0 \times_{\Tt_{\faktor{T^*X}{X}}} 0 \arrow[r, "0 \times_{\beta} 0"] &  0 \times_{\pi_X^* \Ll_{X}} 0 \simeq \pi^* \Ll_X[-1]
	\end{tikzcd}
\end{equation}

where $\beta$ is the dual of the canonical equivalence $ \Ll_{\faktor{T^*X}{X}} \simeq \pi_X^* \Ll_X$ of Proposition \ref{Prop_RelativeCotangentComplexForLinearStacks}. 
\end{Prop}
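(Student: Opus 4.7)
My plan is to specialize the construction of the non-degeneracy morphism $\alpha_P$ from the proof of Theorem \ref{Th_DerivedIntersectionLagrangianFibration} to the concrete setting $Y = T^*X$, $\pi = \pi_X$, $L_1 = L_2 = X$ with identity projections, $f_1 = df$ and $f_2 = 0$, and then invoke Proposition \ref{Prop_CotangentNonDegeneracyFibrationIsNatural} to identify the resulting map with the composition displayed in \eqref{Eq_NonDegeneracyLagFibDerivedCriticalLocus}.

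First I would apply the fibered sequence of relative tangent complexes established in the proof of Theorem \ref{Th_DerivedIntersectionLagrangianFibration}, namely
$$\Tt_{\faktor{\textbf{Crit}(f)}{X}} \to p_1^*\Tt_{\faktor{X}{X}} \oplus p_2^*\Tt_{\faktor{X}{X}} \to F^*\Tt_{\faktor{T^*X}{X}}.$$
Since the projections $L_i = X \to X$ are identities, $\Tt_{\faktor{X}{X}} \simeq 0$, so this fibered sequence canonically identifies $\Tt_{\faktor{\textbf{Crit}(f)}{X}}$ with the homotopy pullback $\Tt_{\faktor{X}{X}} \times_{\Tt_{\faktor{T^*X}{X}}} \Tt_{\faktor{X}{X}} \simeq 0 \times_{\Tt_{\faktor{T^*X}{X}}} 0$. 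This realizes the first two arrows in the stated composition.

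Next I would trace the definition of the dashed morphism $\alpha$ in the proof of Theorem \ref{Th_DerivedIntersectionLagrangianFibration}. With $n=0$, $\alpha_\pi$ arises as the induced map on homotopy fibers of two compatible fibered sequences, where the rightmost comparison morphism is the pullback $F^*\alpha_{\pi_X} : F^*\Tt_{\faktor{T^*X}{X}} \to F^*\pi_X^*\Ll_X \simeq \pi^*\Ll_X$ of the Lagrangian fibration non-degeneracy on $\pi_X$. Since the middle columns (corresponding to the Lagrangian structures on $f_1$ and $f_2$) vanish in our setting, $\alpha_\pi$ reduces to the map $0 \times_{\Tt_{\faktor{T^*X}{X}}} 0 \to 0 \times_{\pi_X^*\Ll_X} 0$ induced by $F^*\alpha_{\pi_X}$ on the common base term, together with the canonical equivalence $0 \times_M 0 \simeq M[-1]$ identifying the target with $\pi^*\Ll_X[-1]$.

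Finally, Proposition \ref{Prop_CotangentNonDegeneracyFibrationIsNatural} identifies $\alpha_{\pi_X}$ with the canonical quasi-isomorphism $\beta$ from Proposition \ref{Prop_RelativeCotangentComplexForLinearStacks}, which yields the stated composition. The most delicate point to check is that $\alpha_\pi$ really factors naturally through $F^*\alpha_{\pi_X}$ on the third component of the fiber sequence: this is essentially built into the construction in the proof of Theorem \ref{Th_DerivedIntersectionLagrangianFibration}, since the Lagrangian fibration structure on $\pi$ is obtained by pullback from that of $\pi_X$, and the cube of non-degeneracy equivalences appearing there is functorial in each face.
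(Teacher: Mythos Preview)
Your proposal is correct and follows essentially the same strategy as the paper: both reduce the identification of $\alpha_\pi$ to that of $\alpha_{\pi_X}$ (handled by Proposition~\ref{Prop_CotangentNonDegeneracyFibrationIsNatural}) by expressing the non-degeneracy diagram for $\textbf{Crit}(f)\to X$ as a termwise homotopy pullback of the corresponding diagram for $T^*X\to X$. The only presentational difference is that the paper rewrites Diagram~\eqref{Dia_LagrangianFibrationNonDegeneracy} directly as such a pullback, identifying the middle row via $\omega_{\textbf{Crit}(f)}^\flat \simeq \Theta_{df}\times_{\omega^\flat}\Theta_0$, whereas you obtain the same identification by specializing the fibered sequences and the cube in the proof of Theorem~\ref{Th_DerivedIntersectionLagrangianFibration}.
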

\begin{proof}
The strategy here is to express the Diagram \eqref{Dia_LagrangianFibrationNonDegeneracy} as a pull-back of the same type of diagrams. It reduces the problem to proving the same statement but for the projection $\pi_X : T^*X \rightarrow X$. But this Proposition is known for the Lagrangian fibration on the shifted cotangent stacks (this is a direct consequence of Proposition \ref{Prop_CotangentNonDegeneracyFibrationIsNatural}).\\

First we express $\Ll_{\textbf{Crit}(f)}[-1]$ as a pull-back above $\Ll_{T^*X}$. This can be done by observing that all squares in the following diagram are bi-Cartesians:

\[ \begin{tikzcd}  
\Ll_{\textbf{Crit}(f)} [-1] \arrow[r] \arrow[d] &\pi_1^* 0^* \Ll_{\faktor{T^*X}{X}}  \arrow[r] \arrow[d] & 0 \arrow[d] \\
\pi_2^* df^* \Ll_{\faktor{T^*X}{X}}  \arrow[r] \arrow[d] & \pi_1^* 0^* \Ll_{T^*X} \simeq \pi_2^* df^*\Ll_{T^*X}  \arrow[r] \arrow[d] & \pi_2^*\Ll_X \arrow[d] \\
0 \arrow[r] & \pi_1^* \Ll_X \arrow[r] & \Ll_{\textbf{Crit}(f)} 
\end{tikzcd} \]

Where $\pi_1$ and $\pi_2$ ar the natural projections $\textbf{Crit}(f) \to X$ given by the pullback Diagram. We write Diagram \eqref{Dia_LagrangianFibrationNonDegeneracy} for $\pi : \textbf{Crit}(f) \rightarrow X$ as: 

\[ \begin{tikzcd}
0 \times_{\Tt_{\faktor{T^*X}{X}}} 0 \arrow[r, "\alpha_{\pi} \simeq 0 \times_{\alpha_{\pi_X}} 0"] \arrow[d] & 0 \times_{\pi_X^* \Ll_X} 0 \arrow[d] \arrow[r] & 0 \arrow[d] \\
\Tt_X \times_{\Tt_{T^*X}} \Tt_X \arrow[r] & \Ll_{\faktor{T^*X}{X}} \times_{\Ll_{T^*X}}\Ll_{\faktor{T^*X}{X}} \arrow[r, " \tx{Id} \times_{\tx{pr}} \tx{Id}"] & \Ll_{\faktor{T^*X}{X}} \times_{\Ll_{\faktor{T^*X}{X}}} \Ll_{\faktor{T^*X}{X}}
\end{tikzcd}\]

In this diagram, pullbacks have been omitted to keep the diagram easy to read. We need to describe the morphism $\omega_{\textbf{Crit}(f)} : \Tt_X \times_{\Tt_{T^*X}} \Tt_X \rightarrow \Ll_{\faktor{T^*X}{X}} \times_{\Ll_{T^*X}}\Ll_{\faktor{T^*X}{X}}$. Recall from Remark \ref{RQ_NonDegeneracyLagrangian} and the proof of the non-degeneracy in Proposition \ref{Prop_LagrangianIntersectionareShiftedSymplectic} that $\omega_{\textbf{Crit}(f)}$ is  $\Theta_{df} \times_{\omega} \Theta_0$ where $\Theta_h : \Tt_X \rightarrow \Ll_{h}[-1] \simeq \Ll_{\faktor{X}{T^*X}}[-1] \simeq \Ll_{\faktor{T^*X}{X}}$ is the natural morphism expressing the non-degeneracy of the Lagrangian structure (see Definition \ref{Def_DerivedLagrangianStructure}).\\

Finally, Proposition \ref{Prop_CotangentNonDegeneracyFibrationIsNatural} shows that $\beta$ is the same as $\alpha_{\pi_X}$. This completes the proof.  
\end{proof}

\section{Examples}

\label{Sec_Examples}

\subsection{One Non-Degenerate Critical Point}
\label{Sec_1NonDegenerateCritPoint}
Let $X$ be a smooth algebraic variety over k and $f : X \rightarrow \Aa_k^1$ a map which is smooth everywhere except at a point $x \in X$ where there is a non degenerate critical point. The goal is to understand the Lagrangian fibration on $\textbf{Crit}(f) \rightarrow X$ and show that it is related to the Hessian quadratic form of $f$ at $x$. This section is a particular case of Section \ref{Sec_FamillyNonDegenerateCriticalLocus}, and we only sketch what is happening in this case. We will be making the statements more precise and give complete proofs in Section \ref{Sec_FamillyNonDegenerateCriticalLocus}.\\

The strict critical locus is $\star := \left( \star, \faktor{\_O_X}{I} \right)$ where $I$ is the ideal generated by the partial derivatives of $f$, $I = \langle df.v, v \in \Tt_X \rangle$. There is a natural morphism $\tilde{x} := \star \rightarrow \textbf{Crit}(f)$ such that the following diagram commutes:

$$ \begin{tikzcd}
\star_{(-1)} \arrow[rd, "x"'] \arrow[r,"\tilde{x}"] & \textbf{Crit}(f) \arrow[d,"\pi"] \\
& X
\end{tikzcd}$$

 The ideal generated by the partial derivatives is maximal and the partial derivatives form a regular sequence. This implies that $\tilde{x}$ is an equivalence. For more details, this is the analogue of Proposition \ref{Prop_NonDegeneracyAndPhiRelatioship}. We can even prove (in the general context of Section \ref{Sec_FamillyNonDegenerateCriticalLocus}) that $T^*[-1]S $ (where $S$ is the strict critical locus) is weakly equivalent to $\textbf{Crit}(f)$. This result will however not be needed for the general case.\\  
 
 Using Lemma \ref{Lem_LagragianFibrationFromAPoint}, the Lagrangian fibration induced on $\star_{(-1)} \rightarrow X$ gives us a closed 2-form in $\_A^{2,cl}\left(\faktor{\star}{X}, -2 \right)$ which induces a metric on $\Tt_x X$. The non-degeneracy of the symmetric bilinear form is equivalent to the non-degeneracy of the Lagrangian fibration, which says that the natural map $x^*\Tt_X \rightarrow x^* \Ll_X$ is a quasi-isomorphism. We will show that this metric is in fact characterised by the Hessian quadratic form of $f$ at the critical point.  \\

We want to describe the Lagrangian fibration obtained on $\star \rightarrow X$ by pulling back along $\tilde{x}$ the homotopy between $\faktor{\omega_{-1}}{X}$ and 0 in $\_A^{2,cl}\left( \faktor{\textbf{Crit}(f)}{X},-1 \right)$. We obtain a homotopy between 0 and itself in $\_A^{2,cl}\left( \faktor{\star}{X},-1 \right)$. We will relate the Hessian quadratic form with the map $\alpha_{x}$ defined to describe the non-degeneracy condition of Lagrangian fibrations (see Definition \ref{def: lagrangian fibration} and Diagram \eqref{Dia_LagrangianFibrationNonDegeneracy}). For $\textbf{Crit}(f)$ and $\star$ this diagram becomes respectively:

\begin{equation*}
\begin{tikzcd}
\Tt_{\faktor{\textbf{Crit}(f)}{X}} \arrow[d] \arrow[r,"\alpha_{\pi}"] & \pi^*\Ll_X[-1] \arrow[d] \arrow[r] & 0 \arrow[d] \\
\Tt_{\textbf{Crit}(f)} \arrow[r, "\omega_{-1}^{\flat}"] & \Ll_{\textbf{Crit}(f)}[-1] \arrow[r] & \Ll_{\faktor{\textbf{Crit}(f)}{X}}[-1] 
\end{tikzcd}
\end{equation*}

and 

\begin{equation*}
\begin{tikzcd}
\Tt_{\faktor{\star}{X}} \arrow[d] \arrow[r,"\alpha_{x}"] & x^* \Ll_X[-1] \arrow[d] \arrow[r] & 0 \arrow[d] \\
0 \arrow[r] & 0 \arrow[r] & \Ll_{\faktor{\star}{X}}[-1].
\end{tikzcd}
\end{equation*}

These two diagrams are supposed to represent the same Lagrangian fibration. We will pull-back along $\tilde{x}$ the diagram for $\textbf{Crit}(f)$ to the category of differential graded $k$-vector space (i.e. $\textbf{QC}(\star)$). We can compare $\alpha_x$ and $\alpha_{\pi}$ via the following commutative diagram:\\

\adjustbox{scale=0.75,center}{
\begin{tikzcd}
 &\tilde{x}^* \Tt_{\faktor{\textbf{Crit}(f)}{X}} \arrow[rr, "\alpha_{\pi}"] \arrow[dd]  & & x^* \Ll_X[-1] \arrow[rr] \arrow[dd] \arrow[dl, dashed] & & 0 \arrow[dl, equals] \arrow[dd]  \\
 \Tt_{\faktor{\star}{X}} \arrow[ur, "\sim"] \arrow[rr, crossing over, "\alpha_x", near end] \arrow[dd, crossing over] & & \Ll_{\faktor{\star}{X}}\simeq x^* \Ll_X[-1] \arrow[rr, crossing over] \arrow[dd, crossing over] & & 0 \arrow[dd] & \\
  &\tilde{x}^* \Tt_{\textbf{Crit}(f)}  \arrow[rr, "\omega_{-1}^{\flat}", near start] & & \tilde{x}^* \Ll_{\textbf{Crit}(f)}[-1] \arrow[rr] \arrow[dl, dashed, "\psi"] & & \tilde{x}^* \Ll_{\faktor{\textbf{Crit}(f)}{X}}[-1]  \arrow[dl, "\sim"]\\
  0 \arrow[ur, "\sim"] \arrow[rr] & & \Ll_{\faktor{\star}{X}} \oplus  \Ll_{\faktor{\star}{X}} [-1] \arrow[rr]  & & \Ll_{\faktor{\star}{X}}[-1] & 
\end{tikzcd}
}\\

We can now look at these morphisms in local étale coordinates around $x$. We denote by $X^i$ coordinates in $X$, $p_i$ a basis of $x^* \Tt_X$ and $\xi^i$ its associated shifted basis in $x^* \Tt_X[1]$. We also denote by $dX^i$ the dual basis of $p_i$.  We write $k\langle a \rangle :=  k\langle a_1, \cdots, a_n\rangle$ for the k-vector space with basis $a_1, \cdots, a_n$. We get:\\

\adjustbox{scale=0.85,center}{
\begin{tikzcd}
 &k\langle \partial_\xi \rangle \arrow[rr, "\alpha_{\pi}"] \arrow[dd]  & &k \langle dX \rangle  \arrow[rr] \arrow[dd] \arrow[dl, dashed] & & 0 \arrow[dl, equals] \arrow[dd]  \\
 k\langle \partial_\xi \rangle  \arrow[ur, "\tx{Id}"] \arrow[rr, crossing over, "\alpha_x", near end] \arrow[dd, crossing over] & & k\langle d\theta \rangle \arrow[rr, crossing over] \arrow[dd, crossing over] & & 0 \arrow[dd] & \\
  &k\langle \partial_\xi , \partial_X \rangle  \arrow[rr, "\omega_{-1}^{\flat}", near start] & & k \langle dX, d\xi \rangle  \arrow[rr] \arrow[dl, dashed, "\psi"] & & k \langle d\xi \rangle  \arrow[dl, "\sim"]\\
  0 \arrow[ur, "\sim"] \arrow[rr] & & k \langle d\theta, d \xi \rangle \arrow[rr]  & & k \langle d\xi \rangle & 
\end{tikzcd}
}\\

Here, $d\theta$ is the standard shifted variable added to make the following pull-back square a strict pull-back: 

\[ \begin{tikzcd}
 k \langle d \theta \rangle \arrow[d] \arrow[r] & 0 \arrow[d] \\
 k\langle d \theta, d \xi \rangle \arrow[r] & k \langle d \xi \rangle 
 \end{tikzcd} \]

This imposes $\delta d \xi = d \theta$. To make the full diagram strictly commutative, we must have $\psi (d\xi) =  d\xi$. And to make $\psi$ a map of chain complexes, we must have $\psi(d \delta \xi^i) = \delta \psi(d \xi^i) = \delta d\xi^i = d \theta^i $ and therefore it imposes $\psi (dX^i) = \tx{Hess}_x^{-1}(f)(d X^i)(dX^j) d\theta^j$. This implies that $\alpha_x (\partial_{\xi^i}) =\tx{Hess}_x^{-1}(f)(d X^i)(dX^j) d\theta^j$.

\subsection{Family of non-Degenerate Critical Points}
\label{Sec_FamillyNonDegenerateCriticalLocus}
We consider a generalisation of the previous example where $f$ may have a family of critical points which are all non-degenerate in the directions normal to the critical locus. \\

Let us fix some notations. We denote by $S$ the strict critical locus, which comes with a closed immersion $i: S \rightarrow X$ and whose algebra of functions is $\_O_S = \faktor{\_O_X}{I}$ with $I = \langle df.v, \ v \in \Tt_X \rangle$. \\

We assume that both $X$ and $S$ are smooth algebraic varieties. We denote by $\textbf{Crit}(f)$ the derived critical locus of $f$ and we get a canonical morphism $\lambda : S \rightarrow \textbf{Crit}(f)$. \\

In order to define the Hessian quadratic form and the non-degeneracy condition, we need to assume that the closed immersion $S \hookrightarrow X$ has a first order splitting. Concretely, we assume all along in this Section that the following fiber sequence splits:

\begin{equation}\label{SplittingAssumption}
\begin{tikzcd}
\Tt_S \arrow[r, shift left] & i^* \Tt_X \arrow[l, shift left, dashed] \arrow[r, shift left] & \Tt_{\faktor{S}{X}}[1]\arrow[l, shift left, dashed]
\end{tikzcd}
\end{equation}  

This assumption is necessary to be able to restrict $Q$ to the normal part $\Tt_{\faktor{S}{X}}[1] $.

\begin{Def}\
The \defi{Hessian quadratic form} is defined by the symmetric bilinear map:

$$\begin{array}{ccccl}
Q & : &  \Sym_{\_O_S}^2 i^* \Tt_X  & \to & \_O_S \\
 & & (w,v) & \mapsto & d(df.v).w \\
\end{array}$$

 We define non-degeneracy to be along the "normal" direction to $S$, by considering the following diagram:  

\begin{equation}
\label{Dia_NonDegeneracyHessian}
\begin{tikzcd}
\Tt_S \arrow[r, shift left] \arrow[d, "0"] &\arrow[l, shift left, dashed] i^* \Tt_X \arrow[r, shift left] \arrow[d,crossing over, "Q", near end] &\arrow[l, shift left, dashed] \Tt_{\faktor{S}{X}}[1] \arrow[d, "0"] \arrow[dll,  dashed, "\widetilde{Q}", near start] \\
\Ll_{\faktor{S}{X}} [-1] \arrow[r, shift left] &\arrow[l, shift left, dashed] i^* \Ll_X \arrow[r, shift left] &\arrow[l, shift left, dashed] \Ll_S 
\end{tikzcd}
\end{equation}

Both rows are split fiber sequences (by assumption in Diagram \eqref{SplittingAssumption}). The left and right vertical maps are the zero map because $Q$ restricted to $\Tt_S$ is zero and, since $Q$ is symmetric, $Q$ composed with the projection to $\Ll_S$ is also zero.  We obtain a map $\widetilde{Q}$ (using $Q$ and following the section and retract of the fiber sequences) which corresponds to the map induced by $Q$ on the normal bundle. Then the \defi{non-degeneracy condition} is the requirement that $\widetilde{Q}$ is a quasi-isomorphism. 

\end{Def}

Since the differential on $\_O_{\textbf{Crit}(f)}$ is $ \delta = \iota_{df}$ (see Remark \ref{RQ_DerivedCriticalLocusSmoothCase}), we have the commutative diagram in $\textbf{QC}(S)$: 

\begin{equation}
\label{Dia_HessianVSdifferential}
\begin{tikzcd}
i^*\Tt_X \arrow[dr, "Q"'] \arrow[r, "\iota_{df}"] & i^* \_O_X \arrow[d, "d"] \\
& i^*\Ll_X
\end{tikzcd}
\end{equation}

We will abusively write $Q = d \circ \delta : i^*\Tt_X[1] \to i^*\Ll_X$ for the map of degree $1$ corresponding to the composition $d \circ \iota_{df} : i^*\Tt_X \to i^* \Ll_X$ of degree $0$.\\

In general, the natural map $\lambda : S \rightarrow \textbf{Crit}(f)$ is not an equivalence. This is due to the fact that the partial derivatives of $f$ will not in general form a regular sequence and therefore $\textbf{Crit}(f)$ has higher homology. The default to be a regular sequence comes from vector fields that annihilate $df$. Such vector fields are in fact vector fields on $S$ when $f$ is non-degenerate. With that idea in mind, we show that an equivalent description of $\textbf{Crit}(f)$ is given by $T^*[-1]S$ when $Q$ is non-degenerate.\\

\begin{Prop}
\label{Prop_MapExistence}
There exists a natural map $\Phi : T^*[-1]S \rightarrow \textbf{Crit}(f)$ making the following diagram commute: 

$$ \begin{tikzcd}
T^*[-1]S \arrow[d, "\pi_S"] \arrow[r, "\Phi"] & \textbf{Crit}(f) \arrow[d, "\pi"] \\
S \arrow[r,"i"] & X
\end{tikzcd}$$
\end{Prop}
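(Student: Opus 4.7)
The plan is to construct $\Phi$ at the level of structure sheaves. By Example \ref{Ex_DerivedCriticalLocusSmoothAlgVariety}, $\textbf{Crit}(f)$ is affine over $X$ with structure cdga $(\Sym_{\_O_X}\Tt_X[1], \iota_{df})$, and $T^*[-1]S$ is affine over $S$ with structure cdga $\Sym_{\_O_S}\Tt_S[1]$, which has zero differential since $S$ is smooth. Via the $\textbf{Spec}_X$-adjunction, providing a map $\Phi: T^*[-1]S \to \textbf{Crit}(f)$ that lies over $i \circ \pi_S$ amounts to providing a morphism of $\_O_X$-cdgas
\[ \phi : (\Sym_{\_O_X}\Tt_X[1], \iota_{df}) \longrightarrow i_*\Sym_{\_O_S}\Tt_S[1] \]
whose restriction to $\_O_X$ in degree $0$ is the natural quotient $\_O_X \to i_*\_O_S$.

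I would define $\phi$ as the morphism of graded $\_O_X$-algebras determined on the two sets of generators as follows. On $\_O_X$ it is the structure map $\_O_X \to i_*\_O_S$ of the closed immersion. On $\Tt_X[1]$, the splitting assumption \eqref{SplittingAssumption} provides a retraction $r : i^* \Tt_X \to \Tt_S$, and I take $\phi$ to be the composite
\[ \Tt_X[1] \longrightarrow i_*i^* \Tt_X[1] \xrightarrow{\;i_*(r[1])\;} i_* \Tt_S[1] \hookrightarrow i_*\Sym_{\_O_S}\Tt_S[1], \]
where the first arrow is the unit of the adjunction $i^* \dashv i_*$ and the last is the inclusion into the Sym-algebra.

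The only substantive point to verify is that $\phi$ is compatible with the differentials. Since the target has zero differential and $\iota_{df}$ is an $\_O_X$-linear derivation of the source, this reduces by the Leibniz rule to checking that $\phi \circ \iota_{df} = 0$ on a generator $v \in \Tt_X$. But $\iota_{df}(v) = df\cdot v \in \_O_X$ lies in the defining ideal $I$ of $S$ by the very definition of the strict critical locus, so its image in $i_*\_O_S$ vanishes. The commutativity of the square in the statement is then automatic: by construction, $\phi$ restricts in degree zero precisely to the structure map associated to $i \circ \pi_S$. The main delicate point is really only the bookkeeping of adjunctions and the observation that the splitting is used in exactly one place, namely to produce the map $i^*\Tt_X \to \Tt_S$ needed to turn the otherwise natural map $T^*[-1]X \times_X S \to \textbf{Crit}(f)$ into a map with source $T^*[-1]S$; no substantial calculation is required.
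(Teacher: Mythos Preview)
Your proof is correct. You build the map explicitly at the level of structure cdgas, using the retraction $r: i^*\Tt_X \to \Tt_S$ from the splitting \eqref{SplittingAssumption} to define $\phi$ on generators and checking that $\iota_{df}(v) = df\cdot v \in I$ dies in $\_O_S$; this is precisely the content needed.

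The paper's argument is the geometric dual of yours. Rather than writing down $\phi$, it uses functoriality of homotopy fiber products: the splitting gives a section $T^*S \dashrightarrow i^*T^*X$ of linear stacks over $S$, and since $i^*df = 0$ on $S$ one obtains a chain of maps
\[
T^*[-1]S \;=\; S\times_{T^*S} S \;\longrightarrow\; S\times_{i^*T^*X} S \;\longrightarrow\; X\times_{T^*X} X \;=\; \textbf{Crit}(f).
\]
If you unwind this using the Koszul models of Example \ref{Ex_DerivedCriticalLocusSmoothAlgVariety}, the intermediate object is $\Sym_{\_O_S}(i^*\Tt_X[1])$ with zero differential, and the composite on algebras is exactly your $\phi$. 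So the two constructions produce the same $\Phi$; your approach has the virtue of being completely explicit and making transparent where the splitting enters, while the paper's formulation makes it clearer that the map is the one induced by the natural functoriality of pullbacks (which is what is later used when comparing the Lagrangian fibration structures in Lemma \ref{Lem_NaturalityofLagrangianFibrationNonDegeneracyMorphisms}).
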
 

\begin{proof}
Under our first order splitting assumption (Diagram \eqref{SplittingAssumption}), the natural map $\Tt_S \rightarrow i^*\Tt_X$ admits a retract, and therefore the natural map $i^*T^*X \rightarrow T^*S$ admits a section: $T^*S \dashrightarrow i^* T^*X$. We consider the following diagram:

\begin{equation*}
\begin{tikzcd}
T^*X & \arrow[l] i^*T^*X \arrow[r, shift left] & \arrow[l, dashed, shift left] T^*S  \\
X \arrow[u, "0"] & S \arrow[l, "i"] \arrow[r, equals] \arrow[u, "0"] & S \arrow[u, "0"]
\end{tikzcd}
\end{equation*}

We want to pull-back these zero sections along the maps induced by $df$ represented by the vertical morphisms in the following commutative diagram: 

\begin{equation*}
\begin{tikzcd}
T^*X & \arrow[l] i^*T^*X \arrow[r, shift left] & \arrow[l, dashed, shift left] T^*S  \\
X \arrow[u, "df"] & S \arrow[l, "i"] \arrow[r, equals] \arrow[u, " i^* df = 0"] & S \arrow[u, "0"]
\end{tikzcd}
\end{equation*}  

This induces the following morphisms between the pull-backs: 

$$ \begin{tikzcd}
\textbf{Crit}(f)& \arrow[l] S \times_{i^* T^*X} S \arrow[r, shift left] & T^*[-1]S \arrow[l, dashed
, shift left]
\end{tikzcd}$$

We obtain a map $\Phi : T^*[-1]S \rightarrow \textbf{Crit}(f)$. The maps we obtain come from the universal properties of the pull-backs therefore if we denote $s_0: X \rightarrow T^*X$ the zero section, we have $s_0 \circ \pi \circ \Phi = s_0 \circ i \circ \pi_S$. If we compose by the projection $\pi_X :  T^*X \rightarrow X $, we get $\pi \circ \Phi = i \circ \pi_S$.
\end{proof}

$\Phi$ gives a relationship between the Lagrangian fibration structures on $T^*[-1]S \rightarrow S$ and $\textbf{Crit}(f) \rightarrow X$ which we now analyse. The idea is to show that the difference between these Lagrangian fibrations is in fact controlled by $\widetilde{Q}$ (see Proposition \ref{Prop_NonDegeneracyAndPhiRelatioship} and Remark \ref{RQ_DifferenceLagrangianFibrationAndQ}). 

\begin{Lem}
\label{Lem_NaturalityofLagrangianFibrationNonDegeneracyMorphisms}
$\Phi$ induces a morphism $\Tt_{\faktor{T^*[-1]S}{S}} \rightarrow \Phi^* \Tt_{\faktor{\textbf{Crit}(f)}{X}}$ that fits in the commutative diagram

\begin{equation}
\begin{tikzcd}
\Tt_{\faktor{T^*[-1]S}{S}} \arrow[r] \arrow[d, "\alpha_{\pi_S}"] & \Phi^* \Tt_{\faktor{\textbf{Crit}(f)}{X}} \arrow[d, "\alpha_{\pi}"] \\
\pi_S^* \Ll_S[-1] \arrow[r] & \Phi^* \pi^* \Ll_X[-1] \simeq \pi_S^* i^* \Ll_X[-1]
\end{tikzcd}
\end{equation}
where the bottom horizontal arrow is the pull-back along $\pi_S$ of the section $\Ll_S[-1] \rightarrow i^* \Ll_X [-1]$ in the dual of the split fiber sequence \eqref{SplittingAssumption}. 
\end{Lem}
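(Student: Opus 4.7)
The plan is to reduce the commutativity of the square to the naturality of the canonical identifications of relative cotangent complexes of linear stacks (Proposition \ref{Prop_RelativeCotangentComplexFunctorialityForLinearStacks}). As a first step, I would re-express both non-degeneracy morphisms in those terms. By Proposition \ref{Prop_CotangentNonDegeneracyFibrationIsNatural} applied with $n=-1$ to $S$, the morphism $\alpha_{\pi_S}$ is precisely the canonical equivalence $\Tt_{T^*[-1]S/S}\simeq \pi_S^*\Ll_S[-1]$ for the linear stack $T^*[-1]S=\Aa(\Ll_S[-1])$. Proposition \ref{Prop_LagrangianFibrationCanonicalNonDegeneracyMap} factors $\alpha_\pi$ as
\[
\Tt_{\textbf{Crit}(f)/X}\xrightarrow{\sim} 0\times_{\Tt_{T^*X/X}} 0\xrightarrow{0\times_\beta 0} 0\times_{\pi_X^*\Ll_X}0\simeq \pi^*\Ll_X[-1],
\]
with $\beta\colon \Tt_{T^*X/X}\simeq \pi_X^*\Ll_X$ the canonical equivalence for $T^*X=\Aa(\Ll_X)$. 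Thus both sides of the square are governed by canonical equivalences of linear stacks, combined on the $\alpha_\pi$ side with the $0\times_{(-)}0$ pullback construction.

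Next, I would decompose $\Phi$ as in the proof of Proposition \ref{Prop_MapExistence},
\[
T^*[-1]S=S\times_{T^*S}S\longrightarrow S\times_{i^*T^*X}S\longrightarrow X\times_{T^*X}X=\textbf{Crit}(f),
\]
where the first arrow is induced by the section $s\colon T^*S\to i^*T^*X$ from the splitting \eqref{SplittingAssumption} and the second by base change along $i\colon S\to X$ together with the natural map $i^*T^*X\to T^*X$. At the sheaf level, $s=\Aa(\sigma)$ where $\sigma\colon \Ll_S\to i^*\Ll_X$ is dual to the retract $i^*\Tt_X\to \Tt_S$. Applying Lemma \ref{Lem_RelativeCotangentBaseRingChange} to the base change along $i$ and Lemma \ref{Lem_RelativeCotangentNaturalityInFiber} to the sheaf map $\sigma$ (equivalently, Proposition \ref{Prop_RelativeCotangentComplexFunctorialityForLinearStacks} in one stroke) yields a commuting square between the canonical equivalences $\Tt_{T^*S/S}\simeq \pi_{T^*S}^*\Ll_S$ and $\beta\colon \Tt_{T^*X/X}\simeq \pi_{T^*X}^*\Ll_X$, whose bottom edge is the base-change-pullback of $\sigma$, i.e.\ of the splitting's dualized section.

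The final step is to apply the $0\times_{(-)}0$ construction to this naturality square. Since pullbacks are $\infty$-functorial, this transforms the left-hand vertical into $\alpha_{\pi_S}$ (via the analogous pullback description coming from $T^*[-1]S=S\times_{T^*S}S$), transforms the right-hand vertical into $\Phi^*\alpha_\pi$, and shifts the bottom edge by $[-1]$, producing exactly the square of the lemma. The main obstacle I anticipate is bookkeeping: composing the base-change and fiber-variation naturalities across the degree shift between $\Ll_S$ and $\Ll_S[-1]$ is delicate, and one must verify that the $0\times_{(-)}0$ construction of Proposition \ref{Prop_LagrangianFibrationCanonicalNonDegeneracyMap} preserves the commutativity of the combined naturality diagram. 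Once this is in place, identifying the bottom map with $\pi_S^*$ of the splitting's dualized section is tautological.
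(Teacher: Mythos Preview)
Your proposal is correct and follows essentially the same route as the paper: identify $\alpha_{\pi_S}$ and $\alpha_\pi$ with the canonical equivalences via Propositions \ref{Prop_CotangentNonDegeneracyFibrationIsNatural} and \ref{Prop_LagrangianFibrationCanonicalNonDegeneracyMap}, obtain the naturality square for $\beta_S$ and $\beta_X$ from Proposition \ref{Prop_RelativeCotangentComplexFunctorialityForLinearStacks} applied to the splitting section $\sigma$, and then apply the $0\times_{(-)}0$ construction to descend to the desired square. The only cosmetic difference is that you invoke Proposition \ref{Prop_CotangentNonDegeneracyFibrationIsNatural} directly for $\alpha_{\pi_S}$ on $T^*[-1]S$, whereas the paper treats both $\alpha$'s uniformly through Proposition \ref{Prop_LagrangianFibrationCanonicalNonDegeneracyMap} (using $T^*[-1]S=\textbf{Crit}(0)$); these are equivalent.
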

\begin{proof}
The homotopy pull-back, $\textbf{Crit}(f) = X \times_{T^*X}^h X$ lives over $X$. We get the equivalences:

 \[ \begin{tikzcd}
 \Tt_{\faktor{\textbf{Crit}(f)}{X}} \arrow[r, "\simeq"] & \Tt_{\faktor{X}{X}} \times_{\faktor{\Tt_{T^*X}}{X}}^h \Tt_{\faktor{X}{X}} \arrow[r, "\simeq"] &   \star  \times_{\faktor{\Tt_{T^*X}}{X}}^h \star \arrow[r, "\simeq"] &  \pi^* \Ll_X[-1] 
 \end{tikzcd}\]

Proposition \ref{Prop_CotangentNonDegeneracyFibrationIsNatural} tells us that the canonical fibrations on the cotangent stacks are the canonical ones and therefore behave functorially (using Proposition \ref{Prop_RelativeCotangentComplexFunctorialityForLinearStacks}). This implies that the following commutative square is commutative:
\[ \begin{tikzcd}
\Tt_{\faktor{T^*S}{S}} \arrow[r] \arrow[d, "\beta_S"]& \Tt_{\faktor{T^*X}{X}} \arrow[d, "\beta_X"] \\
\pi_S^* \Ll_S \arrow[r, "\pi_S^* s"] & \pi_S^* i^*\Ll_X
\end{tikzcd} \]
where $s$ is the section in the dual of the split fiber sequence \eqref{SplittingAssumption}. From Proposition \ref{Prop_LagrangianFibrationCanonicalNonDegeneracyMap},
we know that both $\alpha_{\pi_S}$ and $\alpha_{\pi}$ are the morphism induced by the morphisms $\beta_S$ and $\beta_X$ via Diagram \eqref{Eq_NonDegeneracyLagFibDerivedCriticalLocus}. We then obtain the commutative diagram:
\[\begin{tikzcd} \Tt_{\faktor{T^*[-1]S}{S}} \arrow[r, "\simeq"] \arrow[d]& 0 \times_{\Tt_{\faktor{T^*S}{S}}}^h 0 \arrow[r, "0 \times_{\beta_S}^h 0"] \arrow[d] & \pi_S^* \Ll_S[-1] \arrow[d]\\
\Phi^*\Tt_{\faktor{\textbf{Crit}(f)}{X}} \arrow[r, "\simeq"] & \Phi^* \left( 0 \times_{\Tt_{\faktor{T^*X}{X}}}^h 0 \right) \arrow[r, "0 \times_{\beta_X}^h 0"]  & \Phi^* \pi^* \Ll_X[-1]
  \end{tikzcd} \]
  
  where the composition of the horizontal maps are exactly $\alpha_{\pi_S}$ and $\alpha_{\pi}$ thanks to Proposition \ref{Prop_LagrangianFibrationCanonicalNonDegeneracyMap}.
\end{proof}

\begin{Lem}
\label{Lem_diff_Ll_Crit(f)}
We first remark that $\Phi^*\Ll_{\textbf{Crit}(f)}$ can be described, as a sheaf of graded modules (forgetting the differential), by: 

\[ \Phi^*\Ll_{\textbf{Crit}(f)} \simeq \Sym_{\_O_S} \left(\Tt_S[1] \right) \otimes_{\_O_S} \left( i^* \Ll_X \oplus i^* \Tt_X [1] \right) \] 

where $\Ll_X$ is generated by terms of the form $dg$ with $g \in \_O_X$ and $\Tt_X[1]$ is generated by terms of the form $d \xi$ with $\xi \in \Tt_X[1] \subset \_O_{\textbf{Crit}(f)}$. Then, the internal differential on $\Phi^*\Ll_{\textbf{Crit}(f)}$ is characterised by $Q = d \circ \iota_{df}$ via $\delta(d \xi) = Q(\xi)$ and $\delta (dg) = 0$. 
\end{Lem}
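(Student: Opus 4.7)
The plan is to exploit the semi-free CDGA presentation $\_O_{\textbf{Crit}(f)} \simeq (\Sym_{\_O_X} \Tt_X[1], \iota_{df})|_S$ of Example \ref{Ex_DerivedCriticalLocusSmoothAlgVariety}, from which both the graded structure of the cotangent complex and its differential can be read off explicitly.

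First I would identify $\Ll_{\textbf{Crit}(f)}$ as a graded $\_O_{\textbf{Crit}(f)}$-module. Since the model is semi-free over $\_O_X$ with generators in $\Tt_X[1]$, the relative cotangent complex is $\Ll_{\faktor{\textbf{Crit}(f)}{X}} \simeq \pi^*\Tt_X[1]$, generated by symbols $d\xi$ for $\xi \in \Tt_X[1]$. Alternatively, since $\textbf{Crit}(f) \simeq X \times^h_{T^*X} X$, base change for the relative cotangent complex gives $\Ll_{\faktor{\textbf{Crit}(f)}{X}} \simeq \pi^*\Ll_{\faktor{X}{T^*X}} \simeq \pi^*\Tt_X[1]$ via Proposition \ref{Prop_RelativeCotangentComplexForLinearStacks} applied to the zero section $0 : X \to T^*X$. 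The cotangent fiber sequence $\pi^*\Ll_X \to \Ll_{\textbf{Crit}(f)} \to \pi^*\Tt_X[1]$ splits at the level of graded $\_O_{\textbf{Crit}(f)}$-modules, with the summand $\pi^*\Ll_X$ generated by the symbols $dg$ for $g \in \_O_X$. Pulling back along $\Phi$ and using $\pi \circ \Phi = i \circ \pi_S$ from Proposition \ref{Prop_MapExistence} together with $\_O_{T^*[-1]S} = \Sym_{\_O_S}(\Tt_S[1])$, I obtain the claimed graded decomposition $\Phi^*\Ll_{\textbf{Crit}(f)} \simeq \Sym_{\_O_S}(\Tt_S[1]) \otimes_{\_O_S} (i^*\Ll_X \oplus i^*\Tt_X[1])$.

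For the differential, I would invoke the universal property of the de Rham differential as a derivation $d : \_O_{\textbf{Crit}(f)} \to \Ll_{\textbf{Crit}(f)}$ which is compatible with the internal differential $\delta_{\_O_{\textbf{Crit}(f)}} = \iota_{df}$. This forces $\delta(dg) = d(\delta g) = 0$ for $g \in \_O_X$ (whose internal differential is trivial, since $X$ is smooth), and $\delta(d\xi) = d(\delta \xi) = d(\iota_{df}\xi)$ for $\xi \in \Tt_X[1]$. By Diagram \eqref{Dia_HessianVSdifferential}, the composition $d \circ \iota_{df}$ agrees with the Hessian $Q$ after restriction to $S$, whence $\delta(d\xi) = Q(\xi)$ on $\Phi^*\Ll_{\textbf{Crit}(f)}$.

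The main obstacle I anticipate is making the graded splitting genuinely compatible with the differential: a priori the derivation $d$ could mix the $dg$- and $d\xi$-generators in both directions, but the semi-free presentation together with the vanishing $\delta g = 0$ on $\_O_X$ ensures this cross-term is one-sided, and it is controlled precisely by $d \circ \iota_{df} = Q$. The rest is routine verification that the pullback along $\Phi$ commutes with these identifications, which reduces to the naturality of the cotangent fiber sequence and of the Koszul-type resolution.
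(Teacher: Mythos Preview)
Your proposal is correct and follows essentially the same approach as the paper: read off the graded structure from the semi-free presentation $\_O_{\textbf{Crit}(f)} \simeq (\Sym_{\_O_X}\Tt_X[1], \iota_{df})$, and determine the differential on the generators $dg$, $d\xi$ via the commutation $\delta \circ d = d \circ \delta$ together with Diagram~\eqref{Dia_HessianVSdifferential}. The paper's proof is much terser---it treats the graded description as part of the statement---and makes explicit one point you leave implicit in your ``routine verification'': after pulling back along $\Phi$, the differential on $\Phi^*\Ll_{\textbf{Crit}(f)}$ is $\_O_{T^*[-1]S}$-linear because $\iota_{df}$ vanishes on $\Tt_S[1]\subset i^*\Tt_X[1]$ (this is exactly what it means for $S$ to be the critical locus), which is why it suffices to specify $\delta$ on the generators alone.
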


\begin{proof}

 The differential on $\Sym_{\_O_S} \left(\Tt_S[1] \right) \otimes_{\_O_S} \left( i^* \Ll_X \oplus i^* \Tt_X [1] \right)$ is $\_O_{T^*[-1]S}$-linear because $\iota_{df}$ is zero on $\Tt_S[1]$. Moreover, for $\xi \in \Tt_X[1] \subset \_O_{\textbf{Crit}(f)}= \Sym_{\_O_X} \Tt_X[1]$, we have  $\delta \circ d (\xi) = d \circ \delta (\xi) = d \circ \iota_{df} (\xi) = Q(\xi)$ (see Diagram \eqref{Dia_HessianVSdifferential}), and for $g\in \_O_X$, $\delta \circ d (g) = d \circ \delta g = 0$. 
\end{proof}

\begin{Lem}
\label{Lem_HessianMapFromLagrangianFibration}
The composition 
\[ \begin{tikzcd} \pi_S^* i^* \Tt_X [-1] \arrow[r] & \Phi^* \Tt_{\faktor{\textbf{Crit}(f)}{X}} \arrow[r, "\alpha_{\pi}"] & \Phi^* \pi^* \Ll_X[-1] \end{tikzcd}\]
is given by $\pi_S^* Q$. Similarly, the composition 

\[ \begin{tikzcd} \pi_S^*  \Tt_S [-1] \arrow[r] & \Tt_{\faktor{T^*[-1]S}{S}} \arrow[r, "\alpha_{\pi_S}"] & \pi_S^* \Ll_S[-1] \end{tikzcd}\]

is $0$ (the restriction of $\pi_S^* Q$ to $S$). 
\end{Lem}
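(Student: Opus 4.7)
The plan is to identify the map $\pi_S^*i^*\Tt_X[-1]\to\Phi^*\Tt_{\faktor{\textbf{Crit}(f)}{X}}$ appearing in the statement as a boundary morphism, compute it at the chain level using the explicit description of differentials from Lemma \ref{Lem_diff_Ll_Crit(f)}, and match the resulting identification of the fiber with $\Phi^*\pi^*\Ll_X[-1]$ against $\alpha_\pi$ via Proposition \ref{Prop_LagrangianFibrationCanonicalNonDegeneracyMap}.

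First I would observe that, since $\pi\circ\Phi\simeq i\circ\pi_S$ by Proposition \ref{Prop_MapExistence}, we have $\Phi^*\pi^*\Tt_X\simeq\pi_S^*i^*\Tt_X$, so the map in the statement is the pullback along $\Phi$ of the boundary $\pi^*\Tt_X[-1]\to\Tt_{\faktor{\textbf{Crit}(f)}{X}}$ of the fiber sequence
\[\Tt_{\faktor{\textbf{Crit}(f)}{X}}\to\Tt_{\textbf{Crit}(f)}\to\pi^*\Tt_X.\]
Then, dualizing Lemma \ref{Lem_diff_Ll_Crit(f)}, the sheaf $\Phi^*\Tt_{\textbf{Crit}(f)}$ is, as a graded $\_O_{T^*[-1]S}$-module, $\Sym_{\_O_S}\Tt_S[1]\otimes_{\_O_S}(i^*\Tt_X\oplus i^*\Ll_X[-1])$, and the formula $\delta(d\xi)=Q(\xi)$ dualizes to the statement that the internal differential restricted to the linear summand has component $\pi_S^*Q:\pi_S^*i^*\Tt_X\to\pi_S^*i^*\Ll_X[-1]$ in cohomological degree $+1$. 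The chain-level boundary of $v\in\pi_S^*i^*\Tt_X$ is then computed by lifting $v$ via the graded splitting to the $i^*\Tt_X$ summand of $\Phi^*\Tt_{\textbf{Crit}(f)}$ and applying $\delta$: the result $\pi_S^*Q(v)$ lies in the $\pi_S^*i^*\Ll_X[-1]$ summand, maps to zero under the projection to $\pi_S^*i^*\Tt_X$, and therefore sits inside the fiber $\Phi^*\Tt_{\faktor{\textbf{Crit}(f)}{X}}$. This exhibits the boundary as $\pi_S^*Q$ under the natural inclusion.

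The main obstacle is to verify that the natural chain-level identification of $\Phi^*\Tt_{\faktor{\textbf{Crit}(f)}{X}}$ with $\Phi^*\pi^*\Ll_X[-1]$ used above matches the pullback of $\alpha_\pi$. I would address this via Proposition \ref{Prop_LagrangianFibrationCanonicalNonDegeneracyMap}, which factors $\alpha_\pi$ through the canonical loop-space equivalence $0\times_{\Tt_{\faktor{T^*X}{X}}}0\simeq\Tt_{\faktor{T^*X}{X}}[-1]|_{\textbf{Crit}(f)}$ and the canonical equivalence $\beta:\Tt_{\faktor{T^*X}{X}}\simeq\pi_X^*\Ll_X$ from Proposition \ref{Prop_RelativeCotangentComplexForLinearStacks}. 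A base-change diagram chase, comparing our fiber sequence with the pullback along $\pi$ of $\Tt_{\faktor{X}{T^*X}}\to\Tt_X\to 0^*\Tt_{T^*X}$, together with the splitting $0^*\Tt_{T^*X}\simeq\Tt_X\oplus\Ll_X$ induced by the zero section, identifies the chain-level boundary with the degree-$1$ component of the differential on $\Tt_{\textbf{Crit}(f)}$ coming from $Q$ after applying $\beta$, confirming the first claim.

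For the second statement, the same strategy applies verbatim with $\textbf{Crit}(f)$ replaced by $T^*[-1]S$, regarded as $\textbf{Crit}(0_S)$ for the zero function $0_S:S\to\Aa_k^1$. The relevant Hessian is $d\circ\iota_{d0_S}=0$, so the linear component of the internal differential on $\Tt_{T^*[-1]S}$ vanishes and the composition is zero. Equivalently, this is $\pi_S^*Q$ precomposed with the inclusion $\pi_S^*\Tt_S\hookrightarrow\pi_S^*i^*\Tt_X$ from the first-order splitting \eqref{SplittingAssumption}, which is zero by the very definition of $Q$.
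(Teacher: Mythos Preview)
Your proposal is correct and follows essentially the same approach as the paper: both identify the map $\pi_S^*i^*\Tt_X[-1]\to\Phi^*\Tt_{\faktor{\textbf{Crit}(f)}{X}}$ as the boundary morphism of the relative tangent fiber sequence, then compute it at the chain level using the explicit differential from Lemma~\ref{Lem_diff_Ll_Crit(f)} (lift via the graded splitting, apply $\delta=d\circ\iota_{df}$, obtain $Q$). The paper organizes this by first setting up the commutative ladder with $\alpha_\pi$ and then invoking smoothness of $X$ and $S$ to reduce the dashed arrow to the connecting morphism in cohomology, whereas you work directly at the chain level and then separately verify via Proposition~\ref{Prop_LagrangianFibrationCanonicalNonDegeneracyMap} that your identification of the fiber agrees with $\alpha_\pi$; these are reorderings of the same argument, and your extra paragraph on the ``obstacle'' makes explicit a compatibility that the paper leaves implicit in its diagram.
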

\begin{proof}
 The left morphism is the morphism fitting in the fiber sequence:

\[ \begin{tikzcd} \pi_S^*  i^* \Tt_X [-1] \arrow[r] & \Phi^* \Tt_{\faktor{\textbf{Crit}(f)}{X}} \arrow[r] & \Phi^* \Tt_{\textbf{Crit}(f)} \end{tikzcd}\]

Which gives us:

\[ \begin{tikzcd}
\pi_S^* i^* \Tt_{X} [-1] \arrow[r] \arrow[d, equals] & \Phi^* \Tt_{\faktor{\textbf{Crit}(f)}{X}} \arrow[r] \arrow[d, "\alpha_{\pi}"] & \Phi^* \Tt_{\textbf{Crit}(f)} \arrow[d, "\simeq"] \\
\pi_S^* i^* \Tt_{X} [-1] \arrow[r, dashed] & \Phi^* \pi^* \Ll_{X}[-1] \arrow[r, hookrightarrow]  & \Phi^* \pi^* \Ll_{X}[-1] \oplus \Phi^* \pi^* \Tt_{X}
\end{tikzcd} \]

The second row can be seen as the extension (by $\pi_S^* $) of the fiber sequence: 

\[ \begin{tikzcd}
 i^* \Tt_{X} [-1] \arrow[r, dashed] & i^* \Ll_{X}[-1] \arrow[r, hookrightarrow]  &i^*  \Ll_{X}[-1] \oplus i^* \Tt_{X}
\end{tikzcd} 
\]

Since $X$ and $S$ are smooth, $i^* \Tt_{X} [-1]$ and $i^* \Ll_{X}[-1]$ are both quasi-isomorphic to complexes concentrated in a single degree. This imposes that the dashed arrow is equivalent to the connecting morphism of the induced long exact sequence in cohomology. Therefore, it is equivalent to the map that sends a section $s$ in $i^* \Tt_{X} [-1] $ to its differential, in $i^*  \Ll_{X}[-1] \oplus i^* \Tt_{X}$, which can in turn be seen as an element in   $i^* \Ll_{X}$. More concretely, denote $\tilde{s}$ any lift of $s$ to an element in $i^* \Ll_{X}[-2] \oplus i^* \Tt_{X}   [-1]$. Using Lemma \ref{Lem_diff_Ll_Crit(f)}, its differential is given by 

\[  Q (s) =  Q (\tilde{s}) \in i^*  \Ll_{X}[-1] \subset i^* \Ll_{X}[-1]\oplus i^* \Tt_{X}. \]
 
We then apply $\pi_S^*$ to get the sequence we want. The second part of the statement is proven the same way.
\end{proof}

\begin{Prop}
\label{Prop_NonDegeneracyAndPhiRelatioship}
The map $\Tt_{T^*[-1]S}  \rightarrow  \Phi^* \Tt_{\textbf{Crit}(f)}$ induced by $\Phi$ is an equivalence if and only if $Q$ is non-degenerate.
\end{Prop}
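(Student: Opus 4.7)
My strategy is to study $v$ via the map of vertical fiber sequences coming from the Lagrangian fibrations $\pi_S$ (Example \ref{Ex_CotangentLagrangianFibration}) and $\pi$ (Remark \ref{RQ_CriticalLocusLagragianFibration}), together with the compatibility $\pi \circ \Phi = i \circ \pi_S$ from Proposition \ref{Prop_MapExistence}:
\[
\begin{tikzcd}
\Tt_{\faktor{T^*[-1]S}{S}} \arrow[d, "u"'] \arrow[r] & \Tt_{T^*[-1]S} \arrow[d, "v"] \arrow[r] & \pi_S^* \Tt_S \arrow[d, "w"] \\
\Phi^* \Tt_{\faktor{\textbf{Crit}(f)}{X}} \arrow[r] & \Phi^* \Tt_{\textbf{Crit}(f)} \arrow[r] & \pi_S^* i^* \Tt_X
\end{tikzcd}
\]
Both rows are fiber sequences, so the induced fiber sequence $\mathrm{cofib}(u) \to \mathrm{cofib}(v) \to \mathrm{cofib}(w)$ tells me that $v$ is a quasi-isomorphism if and only if the connecting morphism $\partial : \mathrm{cofib}(w) \to \mathrm{cofib}(u)[1]$ is a quasi-isomorphism.

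Next, I would compute both outer cofibers using the splitting assumption \eqref{SplittingAssumption}. The map $w$ is the $\pi_S$-pullback of the canonical section $\Tt_S \hookrightarrow i^*\Tt_X$, so $\mathrm{cofib}(w) \simeq \pi_S^* \Tt_{\faktor{S}{X}}[1]$. For $u$, Lemma \ref{Lem_NaturalityofLagrangianFibrationNonDegeneracyMorphisms} together with the non-degeneracy of both Lagrangian fibrations (so that $\alpha_{\pi_S}$ and $\alpha_\pi$ are quasi-isomorphisms) identifies $u$ with the $\pi_S$-pullback of the shifted dual section $\Ll_S[-1] \hookrightarrow i^*\Ll_X[-1]$; hence $\mathrm{cofib}(u) \simeq \pi_S^* \Ll_{\faktor{S}{X}}[-2]$, and $\partial$ takes the form $\pi_S^* \Tt_{\faktor{S}{X}}[1] \to \pi_S^* \Ll_{\faktor{S}{X}}[-1]$.

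The key step, which is the main technical point, is to identify $\partial$ with $\pi_S^* \widetilde{Q}$. Unpacking the construction of the connecting morphism: given $\bar{\xi} \in \pi_S^*\Tt_{\faktor{S}{X}}[1]$, one lifts it via the splitting to $\xi \in \pi_S^* i^*\Tt_X$, applies the internal differential of $\Phi^*\Tt_{\textbf{Crit}(f)}$ (which by Lemma \ref{Lem_diff_Ll_Crit(f)} is induced by $Q : i^*\Tt_X \to i^*\Ll_X[-1]$) to produce $Q(\xi)$, and then projects to $\pi_S^* \Ll_{\faktor{S}{X}}[-1]$. This ``lift--differentiate--project'' recipe is precisely the construction of $\widetilde{Q}$ in Diagram \eqref{Dia_NonDegeneracyHessian}; equivalently, Lemma \ref{Lem_HessianMapFromLagrangianFibration} supplies the same identification directly at the level of the Lagrangian fibration data. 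Verifying that the boundary map of the $3\times 3$ diagram is computed by this recipe is where the bookkeeping is delicate.

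Finally, since $\pi_S$ is the projection of a linear stack (whose structure sheaf is a free graded $\_O_S$-algebra), $\pi_S^*$ is conservative on quasi-coherent sheaves, so $\pi_S^*\widetilde{Q}$ is a quasi-isomorphism if and only if $\widetilde{Q}$ is, which by definition is the non-degeneracy of $Q$. Combining all the steps, $v$ is a quasi-isomorphism if and only if $Q$ is non-degenerate.
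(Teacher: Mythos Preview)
Your argument is correct and is essentially the paper's own proof, only reorganised: the paper writes the same $3\times 3$ grid with the relative-tangent fiber sequences as \emph{columns} and reads off that the common cofiber $\_F$ of $v$ and of $\widetilde{Q}$ coincide, whereas you place those sequences as \emph{rows} and phrase the same conclusion via the connecting morphism $\partial:\mathrm{cofib}(w)\to\mathrm{cofib}(u)[1]$. The identifications you invoke (Lemma~\ref{Lem_NaturalityofLagrangianFibrationNonDegeneracyMorphisms} for $u$ and Lemma~\ref{Lem_HessianMapFromLagrangianFibration} for $\partial\simeq\pi_S^*\widetilde{Q}$) are exactly the ones the paper uses; your explicit remark that $\pi_S^*$ is conservative is a small clarification the paper leaves implicit.
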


\begin{proof}

First, using the equivalences $\alpha_{\pi}: \Phi^* \Tt_{\faktor{\textbf{Crit}(f)}{X}} \rightarrow \pi_S^* i^* \Ll_X[-1]$ and  $\alpha_{\pi_S} : \Phi^* \Tt_{\faktor{T^*[-1]S}{S}} \rightarrow \pi_S^* \Ll_S[-1]$, we can show that the cofiber of $\Tt_{\faktor{T^*[-1]S}{S}} \rightarrow \Phi^* \Tt_{\faktor{\textbf{Crit}(f)}{X}}$ is equivalent to $\pi_S^* \Ll_{\faktor{S}{X}} [-2]$. Then Lemma \ref{Lem_NaturalityofLagrangianFibrationNonDegeneracyMorphisms} and \ref{Lem_HessianMapFromLagrangianFibration} ensure that the upper half of the following diagram is commutative:

\begin{equation}
\begin{tikzcd}
\pi_S^* \Tt_S[-1] \arrow[d] \arrow[r] & \pi_S^* i^* \Tt_X[-1] \arrow[r] \arrow[d] & \pi_S^* \Tt_{\faktor{S}{X}} \arrow[d, "\widetilde{Q}"] \\
\Tt_{\faktor{T^*[-1]S}{S}} \arrow[r] \arrow[d] & \Phi^* \Tt_{\faktor{\textbf{Crit}(f)}{X}} \arrow[r] \arrow[d] & \pi_S^* \Ll_{\faktor{S}{X}}[-2] \arrow[d]\\
\Tt_{T^*[-1]S} \arrow[r] & \Phi^* \Tt_{\textbf{Crit}(f)} \arrow[r] & \_F
\end{tikzcd}
\end{equation}

This diagram is then commutative and all rows and columns are cofiber sequences and in particular $\_F$ is both the homotopy cofiber of $\Tt_{T^*[-1]S}  \rightarrow  \Phi^* \Tt_{\textbf{Crit}(f)}$ and the homotopy cofiber of $\widetilde{Q}$. In particular, the homotopy cofiber of $\widetilde{Q}$ is zero if and only the homotopy cofiber of $\Tt_{T^*[-1]S}  \rightarrow  \Phi^* \Tt_{\textbf{Crit}(f)}$ is also zero. 
\end{proof}

We now decompose $\alpha_\pi$ into a part along $S$ and a part normal to $S$. This decomposition is by means of split fibered sequences coming from the split fiber sequence \eqref{SplittingAssumption}. \\

\begin{Prop}
When $Q$ is non-degenerate, the maps expressing the non-degeneracy of the Lagrangian fibrations fit in the commutative diagram:
\[ \begin{tikzcd}
\Tt_{\faktor{T^*[-1]S}{S}} \arrow[r] \arrow[d, "\alpha_{\pi_S}"] & \Tt_{\faktor{\textbf{Crit}(f)}{X}} \arrow[r] \arrow[d, "\alpha_{\pi}"] & \Tt_{\faktor{S}{X}} \arrow[d, "\widetilde{Q}"] \\
\pi_S^* \Ll_S [-1] \arrow[r] & \pi_S^* i^* \Ll_X [-1] \arrow[r] & \Ll_{\faktor{S}{X}} [-1]
\end{tikzcd}\]
where the rows are fiber sequences.   
\end{Prop}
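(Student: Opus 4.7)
The plan is to assemble the diagram by pasting together squares already essentially established in the preceding material, and then identify the induced map on cofibers as $\widetilde{Q}$. First, the bottom row is the $\pi_S^*$-pullback of the rotated cotangent fiber sequence $\Ll_S[-1] \to i^* \Ll_X[-1] \to \Ll_{\faktor{S}{X}}[-1]$; under the splitting assumption \eqref{SplittingAssumption} this is a split short exact sequence of vector bundles on $S$, and pulling back along $\pi_S$ preserves this property.

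For the top row, the crucial input is Proposition \ref{Prop_NonDegeneracyAndPhiRelatioship}: non-degeneracy of $Q$ implies that $\Phi$ is a weak equivalence, so I may identify $\Phi^*\Tt_{\faktor{\textbf{Crit}(f)}{X}}$ with $\Tt_{\faktor{T^*[-1]S}{X}}$. Applying the relative tangent fiber sequence to the composition $T^*[-1]S \xrightarrow{\pi_S} S \xrightarrow{i} X$ then yields $\Tt_{\faktor{T^*[-1]S}{S}} \to \Tt_{\faktor{T^*[-1]S}{X}} \to \pi_S^* \Tt_{\faktor{S}{X}}$, which is the top row of the diagram.

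The left square commutes by Lemma \ref{Lem_NaturalityofLagrangianFibrationNonDegeneracyMorphisms}, and the right square is obtained by taking cofibers along the rows. The substantive remaining step is to identify the induced map between these cofibers with $\widetilde{Q}$. For this I would combine the $3\times 3$ diagram already displayed in the proof of Proposition \ref{Prop_NonDegeneracyAndPhiRelatioship} with Lemma \ref{Lem_HessianMapFromLagrangianFibration}, which asserts that the composition $\pi_S^* i^*\Tt_X[-1] \to \Phi^*\Tt_{\faktor{\textbf{Crit}(f)}{X}} \xrightarrow{\alpha_\pi} \pi_S^* i^*\Ll_X[-1]$ equals $\pi_S^* Q$. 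Because both rows split via \eqref{SplittingAssumption}, the map induced on the normal-direction cofibers by $Q$ is, by the very construction of Diagram \eqref{Dia_NonDegeneracyHessian}, precisely $\widetilde{Q}$.

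The main obstacle I anticipate is not conceptual but bookkeeping: carefully tracking shift conventions and verifying via a $3\times 3$-lemma-style argument that the three columns of the composite diagram form fiber sequences so that the induced map on cofibers is well-defined and canonically matches $\widetilde{Q}$. This is essentially already packaged in the diagram used to prove Proposition \ref{Prop_NonDegeneracyAndPhiRelatioship}, so the bulk of the remaining work is to organize that material into the square-by-square form demanded by the statement.
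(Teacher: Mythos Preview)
Your proposal is correct and follows essentially the same route as the paper: establish the top fiber sequence via Proposition~\ref{Prop_NonDegeneracyAndPhiRelatioship}, use Lemma~\ref{Lem_NaturalityofLagrangianFibrationNonDegeneracyMorphisms} for the left square, and identify the induced map on cofibers as $\widetilde{Q}$ via Lemma~\ref{Lem_HessianMapFromLagrangianFibration} and the splitting~\eqref{SplittingAssumption}. One small imprecision: Proposition~\ref{Prop_NonDegeneracyAndPhiRelatioship} literally gives that $\Tt_{T^*[-1]S}\to\Phi^*\Tt_{\textbf{Crit}(f)}$ is an equivalence, not that $\Phi$ itself is; but this weaker statement is exactly what you need to identify $\Phi^*\Tt_{\faktor{\textbf{Crit}(f)}{X}}$ with $\Tt_{\faktor{T^*[-1]S}{X}}$, so your argument goes through unchanged.
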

\begin{proof}
First, when $Q$ is non-degenerate, the top horizontal sequence is fibered and comes from the following diagram:

\[ \begin{tikzcd}
\Tt_{\faktor{T^*[-1]S}{S} } \arrow[d] \arrow[r] & \Phi^* \Tt_{\faktor{\textbf{Crit}(f)}{X}} \arrow[d] \arrow[r, dashed] & \pi_S^* \Tt_{\faktor{S}{X}} \arrow[d]\\
\Tt_{T^*[-1]S} \arrow[r] \arrow[d] & \Phi^*\Tt_{\textbf{Crit}(f)} \arrow[d] \arrow[r] & 0 \arrow[d] \\
\pi_S^* \Tt_S  \arrow[r]  & \Phi^* i^* \Tt_X \arrow[r]  & \pi_S^* \Tt_{ \faktor{S}{X}}[1]  \\
\end{tikzcd} \]

where all rows and columns are fibered and the cofiber of the second row is $0$ thanks to Proposition \ref{Prop_NonDegeneracyAndPhiRelatioship} since we assumed that $Q$ is non-degenerate. Using Lemma \ref{Lem_NaturalityofLagrangianFibrationNonDegeneracyMorphisms} and Lemma \ref{Lem_HessianMapFromLagrangianFibration}, we obtain the following commutative diagram: 

\begin{equation}\label{Dia_LagrangianFibrationCofiber}
\begin{tikzcd}
\pi_S^* \Tt_S [-1] \arrow[r] \arrow[d] \arrow[dd,  bend right = 70, "0"', near start,crossing over] & \Phi^* i^* \Tt_X [-1] \arrow[r] \arrow[d] \arrow[dd, bend right = 70, "Q"', near start,crossing over] & \pi_S^* \Tt_{\faktor{S}{X}} \arrow[d, equals] \arrow[dd,  bend right = 70, "\widetilde{Q}"', near start,crossing over] \\
\Tt_{\faktor{T^*[-1]S}{S}} \arrow[d, "\alpha_{\pi_S}"] \arrow[r] & \Phi^*\Tt_{\faktor{\textbf{Crit}(f)}{X}} \arrow[d, "\alpha_{\pi}"] \arrow[r] & \pi_S^* \Tt_{\faktor{S}{X}} \arrow[d, dashed] \\
\pi_S^* \Ll_S[-1] \arrow[r] & \Phi^* i^* \Ll_X[-1] \arrow[r] & \pi_S^* \Ll_{\faktor{S}{X}}[-2]
 \end{tikzcd}
  \end{equation}
 
 The only map the dashed arrow can be, in order to make the diagram commutative, is $\widetilde{Q}$. 
\end{proof}

\begin{RQ}
\label{RQ_DifferenceLagrangianFibrationAndQ}
If we do not assume $Q$ non-degenerate, the cofiber $\_F$ of the map $\Tt_{T^*[-1]S} \rightarrow \Phi^* \Tt_{\textbf{Crit}(f)}$ will be non zero. We will denote by $\_G$ the fiber of the natural map $\_F \rightarrow \Tt_{\faktor{S}{X}}$. Then we can rewrite Diagram \eqref{Dia_LagrangianFibrationCofiber} as

\[
\begin{tikzcd}
\pi_S^* \Tt_S [-1] \arrow[r] \arrow[d] \arrow[dd,  bend right = 70, "0"', near start,crossing over] & \Phi^* i^* \Tt_X [-1] \arrow[r] \arrow[d] \arrow[dd, bend right = 70, "Q"', near start,crossing over] & \pi_S^* \Tt_{\faktor{S}{X}} \arrow[d] \arrow[dd,  bend right = 70, "\widetilde{Q}"', near start,crossing over] \\
\Tt_{\faktor{T^*[-1]S}{S}} \arrow[d, "\alpha_{\pi_S}"] \arrow[r] & \Phi^*\Tt_{\faktor{\textbf{Crit}(f)}{X}} \arrow[d, "\alpha_{\pi}"] \arrow[r] & \_G \arrow[d, "\alpha_N"] \\
\pi_S^* \Ll_S[-1] \arrow[r] & \Phi^* i^* \Ll_X[-1] \arrow[r] & \pi_S^* \Ll_{\faktor{S}{X}}[-2]
 \end{tikzcd}
  \]

 The map $\alpha_N : \_G \rightarrow \pi_S^*\Ll_{\faktor{S}{X}}[-2] $ represent the "difference" between the maps $\alpha_\pi$ and $\alpha_{\pi_S}$ from the Lagrangian fibrations. $\alpha_N$ is still related to $\widetilde{Q}$ in the sense that the following diagram is commutative: 
 
 \[\begin{tikzcd}
  \Tt_{\faktor{S}{X}} \arrow[d] \arrow[dr, "\widetilde{Q}"]  & \\
 \_G \arrow[r, "\alpha_N"] & \Ll_{\faktor{S}{X}} [-2]
 \end{tikzcd}\]
 
 Therefore the restriction of $\alpha_N$ to $\Tt_{\faktor{S}{X}}$ is again $\widetilde{Q}$. 
\end{RQ}

\begin{RQ}
As a non-example if we take $f : \Aa^1 \rightarrow \Aa^1$ sending $X$ to $\frac{X^3}{3}$, the basic assumptions that made this section work are failing. The strict critical locus $S$ is not smooth since it is a fat point, and the sequence \eqref{SplittingAssumption} does not split.  
\end{RQ}   

\subsection{Derived Zero Locus of Shifted 1-Forms}
\label{Sec_DerivedIntersectionClosedForms}
Let $X$ be a derived Artin stack and $\alpha \in \_A^{1} \left( X, n \right)$ be a 1-form. If $\textbf{Key}(\alpha )$ is non-empty, Proposition \ref{Prop_SpaceofKeyequivalenttoSpaceofIsotropicStructures} and Remark \ref{RQ_Closed1FormAreNonDegenerateIsotropicStructure} ensure that the map $\alpha : X \rightarrow T^*[n]X$ is a Lagrangian morphism. Using Theorem \ref{Th_DerivedIntersectionLagrangianFibration} the derived intersection $Z(\alpha)$ of $\alpha$ with the zero section gives us a Lagrangian fibration $Z(\alpha) \rightarrow X$. This example is a generalisation of the derived critical locus we described in \ref{Sec_DerivedCriticalLocus}.

\subsection{$G$-Equivariant Twisted Cotangent Bundles}

 For $X$ a smooth scheme, a twisted cotangent stack is a twist of the ordinary cotangent stack by a closed $1$-form of degree $1$ on $X$, $\alpha \in H^1(X, \Omega_X^1)$. Such a closed form has an underlying 1-form of degree 1 that corresponds to a morphism $\alpha : X \rightarrow T^*[1]X$. The \defi{twisted cotangent bundle} associated to $\alpha$ is defined to be the following pull-back:

$$ \begin{tikzcd}
   T_\alpha^* X \arrow[r] \arrow[d] & X \arrow[d, "\alpha"] \\
   X \arrow[r, "0"] & T^*[1]X
 \end{tikzcd}
 $$ 

We refer to \cite{Ha16} for more informations on the relation between this definition and the usual definition of twisted cotangent bundles. This is a particular case of the situation in Section \ref{Sec_DerivedIntersectionClosedForms} and as such, $T_\alpha^* X$ is $0$-shifted symplectic and the map $T_\alpha^* X \rightarrow X$ has a Lagrangian fibration structure. \\

Now take $G$ an algebraic group acting on the algebraic variety $X$. Consider a character $\chi : G \rightarrow \Gg_m$. We have the logarithmic form on $\Gg_m$ given by a map $\Gg_m \rightarrow \_A^{1,cl}(-,0)$ which sends $z$ to $z^{-1} dz$. We get a closed 1-form on $G$ described by the composition:

$$ G \rightarrow \Gg_m \rightarrow \_A^{1,cl}(-,0) $$

This is also a group morphism for the additive structure on $\_A^{1,cl}(-,0)$. We can therefore pass to classifying spaces and obtain a $1$-shifted closed $1$-form on $\textbf{B}G$:

$$ \alpha_\chi : \textbf{B}G \rightarrow \textbf{B}\_A^{1,cl}(-,0) = \_A^{1,cl}(-,1)$$

We can consider the pull-back of $\alpha_\chi$ along the $G$-equivariant moment map:

$$ \begin{tikzcd}
\left[ \faktor{T^*X}{G} \right] \times_{ \left[ \faktor{\G_g^*}{G} \right] } \textbf{B}G \arrow[r] \arrow[d] & \textbf{B}G \arrow[d, "\alpha_\chi"] \\
\left[ \faktor{T^*X}{G} \right]  \arrow[r, "\mu"] & \left[ \faktor{\G_g^*}{G} \right]  \simeq T^*[1]\textbf{B}G
\end{tikzcd}$$ 

It turns out that the moment map $\mu$ is Lagrangian (see \cite{Cal1}), which implies (with Proposition \ref{Prop_LagrangianIntersectionareShiftedSymplectic}) that this fiber product is $0$-shifted symplectic. It turns out that we have an equivalence of shifted symplectic derived Artin stacks: 

$$\left[ \faktor{T^*X}{G} \right] \times_{ \left[ \faktor{\G_g^*}{G} \right] }\textbf{B}G \simeq T_{\widehat{\alpha}}^* \left[ \faktor{X}{G} \right] $$ 

Where $\widehat{\alpha}$ denotes the pull-back of $\alpha_\chi$ to a $1$-form of degree $1$ on $\left[ \faktor{X}{G} \right]$. Therefore, according to Theorem \ref{Th_DerivedIntersectionLagrangianFibration}, the natural projection
 
 $$ \begin{tikzcd}
 T_{\widehat{\alpha}}^* \left[ \faktor{X}{G} \right] \arrow[r] & \left[ \faktor{X}{G} \right]
 \end{tikzcd}$$
 is a Lagrangian fibration.\\

To show the equivalence above, we use the following composition of the following Lagrangian correspondences (see \ref{Def_LagrangianCorrespondence}):

\begin{itemize}
	\item The Lagrangian structure on the section $ \left[ \faktor{X}{G} \right] \rightarrow T^*[1] \left[ \faktor{X}{G} \right]$: 
	
	\[ \begin{tikzcd}
		& \left[ \faktor{X}{G} \right] \arrow[dl] \arrow[dr, "0"]& \\
		\star& &  T^*[1] \left[ \faktor{X}{G} \right] 
	\end{tikzcd}\]

\item Using Example 2.3 in \cite{Cal1}, and the fact that $\left[ \faktor{X \times \G_g^*}{G} \right] \simeq \left[\faktor{X}{G}\right] \times_{\left[ \faktor{\star}{G} \right]} \left[ \faktor{\G_g^*}{G} \right] $ we obtain the Lagrangian correspondence: 

\[ \begin{tikzcd}
	& \left[ \faktor{X \times \G_g^*}{G} \right] \simeq \left[\faktor{X}{G}\right] \times_{\left[ \faktor{\star}{G} \right]} \left[ \faktor{\G_g^*}{G} \right] \arrow[dl] \arrow[dr]& \\
	T^* [1]\left[ \faktor{X}{G} \right] & & \left[ \faktor{\G_g^*}{G} \right] \simeq T^*[1]\left[ \faktor{\star}{G} \right] 
\end{tikzcd}\]

\item The Lagrangian obtain from the closed $1$-form of degree $1$, $\alpha_\chi$:  	

\[ \begin{tikzcd}
	& \textbf{B}G \arrow[dl, "\alpha_\chi"] \arrow[dr]& \\
 \left[ \faktor{\G_g^*}{G} \right] 	& & \star
\end{tikzcd}\]

\end{itemize}

We then compose these Lagrangian correspondences: 

$$ \begin{tikzcd}[column sep=small]
  & & & T_{\widehat{\alpha}}^* \left[ \faktor{X}{G} \right] \arrow[dl] \arrow[dr]& & &\\
  & & \left[ \faktor{T^*X}{G} \right] \arrow[dl] \arrow[dr] & & \left[ \faktor{X}{G} \right] \arrow[dl] \arrow[dr] & & \\
  &\left[ \faktor{X}{G} \right] \arrow[dl] \arrow[dr]& & \left[ \faktor{\G_g^* \times X}{G} \right] \arrow[dl] \arrow[dr] & &\textbf{B}G \arrow[dl, "\alpha_\chi"] \arrow[dr] & \\
  \star & & T^*[1] \left[ \faktor{X}{G} \right]& & \left[ \faktor{\G_g^*}{G} \right]& & \star 
\end{tikzcd}
$$

The only thing we need to show is that this is a diagram of Lagrangian correspondences and therefore we need to show that all squares in this diagrams are pull-backs. The right most square is clearly a pull-back and we can recognise the pull-back square defining $T_{\widehat{\alpha}}^* \left[ \faktor{X}{G} \right]$.\\

 We are left to prove that we have a natural equivalence:
 \[ \left[ \faktor{X}{G} \right] \times_{ T^*[1]\left[ \faktor{X}{G} \right]} \left[ \faktor{ \G_g^* \times X}{G} \right] \simeq \left[ \faktor{T^*X}{G} \right] \]

We can commute taking the quotient by (compatible) $G$-action and taking the fiber products so we have that:

\[ \left[ \faktor{X}{G} \right] \times_{ T^*[1]\left[ \faktor{X}{G} \right]} \left[ \faktor{ \G_g^* \times X}{G} \right]  \simeq   \left[ \faktor{X}{G} \right] \times_{ T^*[1]\left[ \faktor{X}{G} \right]}\left[ \faktor{X}{G} \right] \times_{\left[ \faktor{\star}{G} \right]} \left[ \faktor{ \G_g^*}{G} \right]\]

We now use the fact that the self intersection of the $0$ section in $T^*[1]\left[\faktor{X}{G} \right]$ is $T^*\left[\faktor{X}{G} \right]$. This implies that:

\[ \left[ \faktor{X}{G} \right] \times_{ T^*[1]\left[ \faktor{X}{G} \right]} \left[ \faktor{ \G_g^* \times X}{G} \right]  \simeq   T^*\left[ \faktor{X}{G} \right] \times_{\left[ \faktor{\star}{G} \right]} \left[ \faktor{ \G_g^*}{G} \right]\]

 We can now usethe fact fact the following square is a pull-back (Example 2.2.1 in \cite{Saf16}):
\[
\begin{tikzcd}
	T^* \left[ \faktor{X}{G} \right] \arrow[d] \arrow[r] & \textbf{B}G \arrow[d, "0"]\\
	\left[ \faktor{T^*X}{G} \right] \arrow[r, "\mu"] & \left[ \faktor{\G_g^* }{G} \right] 
\end{tikzcd}\]

We use that to decompose $	T^* \left[ \faktor{X}{G} \right]$ in a fiber product and we obtain:

\[\left[ \faktor{X}{G} \right] \times_{ T^*[1]\left[ \faktor{X}{G} \right]} \left[ \faktor{ \G_g^* \times X}{G} \right]  \simeq \left[ \faktor{T^*X}{G} \right] \times_{\left[ \faktor{\G_g^* }{G} \right]} \left[ \faktor{\star}{G}  \right] \times_{\left[ \faktor{\star}{G} \right]} \left[ \faktor{\G_g^*}{G} \right] \simeq \left[ \faktor{T^*X}{G} \right] \]


\nocite{*}
\bibliographystyle{plain}
\bibliography{LagrangianFibration}

 \end{document}